      \numberwithin{equation}{section}
\newcommand{\SE}{{\mathscr E}}
\newcommand{\SF}{{\mathscr F}}
\newcommand{\SG}{{\mathscr G}}
\newcommand{\ST}{{\mathscr T}}
\newcommand{\SU}{{\mathscr U}}
\newcommand{\BA}{{\mathbb{A}}}
\newcommand{\BE}{{\mathbb{E}}}
\newcommand{\BL}{{\mathbb{L}}}
\newcommand{\BP}{{\mathbb{P}}}
\newcommand{\BQ}{{\mathbb{Q}}}
\newcommand{\BT}{{\mathbb{T}}}
\newcommand{\BZ}{{\mathbb{Z}}}
\newcommand{\CC}{{\mathcal C}}
\newcommand{\CE}{{\mathcal E}}
\newcommand{\CF}{{\mathcal F}}
\newcommand{\CG}{{\mathcal G}}
\newcommand{\CH}{{\mathcal H}}
\newcommand{\CK}{{\mathcal K}}
\newcommand{\CL}{{\mathcal L}}
\newcommand{\CM}{{\mathcal M}}
\newcommand{\CT}{{\mathcal T}}
\DeclareSymbolFont{cmarrows}{OMS}{cmsy}{m}{n}
\DeclareMathSymbol{\cmminus}{\mathbin}{cmarrows}{"00}
\DeclareMathSymbol{\leftrightarrow}{\mathrel}{cmarrows}{"24}
\DeclareMathSymbol{\leftarrow}{\mathrel}{cmarrows}{"20}
\DeclareMathSymbol{\rightarrow}{\mathrel}{cmarrows}{"21}
   \let\to=\rightarrow
\DeclareMathSymbol{\mapstochar}{\mathrel}{cmarrows}{"37}
   \def\mapsto{\mapstochar\rightarrow}
\DeclareSymbolFont{usualmathcal}{OMS}{cmsy}{m}{n}
\DeclareSymbolFontAlphabet{\mathcal}{usualmathcal}
\DeclareMathAlphabet\BCal{OMS}{cmsy}{b}{n} 
\newcommand*{\myInfty}{{\scriptstyle\infty}} 
\newcommand{\mylabel}[2]{#2\def\@currentlabel{#2}\label{#1}}
\newcommand*{\defeq}{\mathrel{\vcenter{\baselineskip0.5ex \lineskiplimit0pt
                     \hbox{\scriptsize.}\hbox{\scriptsize.}}}=}
\definecolor{cornellred}{rgb}{0.7, 0.11, 0.11}
\definecolor{britishracinggreen}{rgb}{0.0, 0.26, 0.15}
\definecolor{cobalt}{rgb}{0.0, 0.28, 0.67}
\newcommand{\into}{\hookrightarrow}
\newcommand{\onto}{\twoheadrightarrow}
\newcommand{\HH}{\mathrm{H}}
\newcommand{\OO}{\mathscr O}
\newcommand{\boldit}[1]{\boldsymbol{#1}}
\newcommand{\crit}{\operatorname{crit}}
\newcommand{\Rcrit}{\mathbf{R}\mathrm{crit}}
\DeclareMathOperator{\vir}{\mathrm{vir}}
\DeclareMathOperator{\coker}{coker}
\DeclareMathOperator{\Perf}{Perf}
\DeclareMathOperator{\Cone}{\mathrm{Cone}}
\DeclareMathOperator{\RR}{\mathbf{R}}
\DeclareMathOperator{\Spec}{Spec}
\DeclareMathOperator{\Hilb}{Hilb}
\DeclareMathOperator{\Quot}{Quot}
\DeclareMathOperator{\Coh}{Coh}
\DeclareMathOperator{\coh}{\mathsf{coh}}
\DeclareMathOperator{\qcoh}{\mathsf{qcoh}}
\DeclareMathOperator{\QCoh}{QCoh}
\DeclareMathOperator{\id}{id}
\DeclareMathOperator{\pr}{pr}
\DeclareMathOperator{\Supp}{Supp}
\DeclareMathOperator{\derived}{\mathbf{D}}
\DeclareMathOperator{\bounded}{\mathrm{b}}
\DeclareMathOperator{\lissetale}{\mathrm{lis-\acute{e}t}}
\DeclareMathOperator{\etale}{\mathrm{\acute{e}t}}
\DeclareMathOperator{\Boxplus}{\mathlarger{\mathlarger{\mathlarger{\boxplus}}}}
\DeclareMathOperator{\Sets}{\mathsf{Sets}}
\DeclareMathOperator{\Mod}{Mod}
\DeclareMathOperator{\cdga}{\mathsf{cdga}_{\Bbbk}^{\leqslant0}}
\DeclareMathOperator{\Cat}{\mathsf{Cat}}
\DeclareMathOperator{\Fun}{\mathsf{Fun}}
\DeclareMathOperator{\Spaces}{\mathsf{Spaces}}
\DeclareMathOperator{\dSt}{\mathsf{dSt}_{\Bbbk}}
\DeclareMathOperator{\dSch}{\mathsf{dSch}_{\Bbbk}}
\DeclareMathOperator{\dArt}{\mathsf{dArt}_{\Bbbk}}
\DeclareMathOperator{\dAff}{\mathsf{dAff}_{\Bbbk}}
\DeclareMathOperator{\St}{\mathsf{St}_{\Bbbk}}
\DeclareMathOperator{\Hom}{Hom}
\DeclareMathOperator{\Ext}{Ext}
\DeclareMathOperator{\lHom}{\mathscr{H}\kern-0.3em\mathit{om}}
\DeclareMathOperator{\RRlHom}{\mathbf{R}\kern-0.025em\mathscr{H}\kern-0.3em\mathit{om}}
\DeclareMathOperator{\RHom}{{\mathbf{R}\mathrm{Hom}}}
\DeclareMathOperator{\lExt}{{\mathscr{E}\kern-0.2em\mathit{xt}}}
\tikzset{commutative diagrams/.cd,
mysymbol/.style={start anchor=center,end anchor=center,draw=none}}
\newcommand\MySymb[2][\square]{%
  \arrow[mysymbol]{#2}[description]{#1}}
\tikzset{
shift up/.style={
to path={([yshift=#1]\tikztostart.east) -- ([yshift=#1]\tikztotarget.west) \tikztonodes}
}
}
\theoremstyle{definition}
\newtheorem*{lemma*}{Lemma}
\newtheorem*{theorem*}{Theorem}
\newtheorem*{example*}{Example}
\newtheorem*{fact*}{Fact}
\newtheorem*{notation*}{Notation}
\newtheorem*{definition*}{Definition}
\newtheorem*{prop*}{Proposition}
\newtheorem*{remark*}{Remark}
\newtheorem*{corollary*}{Corollary}
\newtheorem*{conventions*}{Conventions}
\newtheorem{definition}{Definition}[section]
\newtheorem{remark}[definition]{Remark}
\newtheorem{warning}[definition]{Warning}
\newtheoremstyle{thm} 
        {3mm}
        {3mm}
        {\slshape}
        {0mm}
        {\bfseries}
        {.}
        {1mm}
        {}
\theoremstyle{thm}
\newtheorem{theorem}[definition]{Theorem}
\newtheorem{corollary}[definition]{Corollary}
\newtheorem{lemma}[definition]{Lemma}
\newtheorem{prop}[definition]{Proposition}
\newtheorem{thm}{Theorem}
\newtheorem*{Acknowledgments*}{Acknowledgments}
\newcommand{\fl}{\mathsf{flat}}
\newcommand{\uni}{\mathrm{uni}}
\DeclareMathOperator{\RSpec}{\mathbf{Spec}}
\DeclareMathOperator{\RQuot}{\mathbf{R}Quot}
\DeclareMathOperator{\DPerf}{\mathbf{Perf}}
\newcommand{\RQuotlX}[1]{\RQuot^{[#1]}_{X}(\CE)}
\newcommand{\DPerflX}[1]{\mathbf{Perf}^{[#1]}(X)}
\newcommand{\XX}{\boldit{X}}
\newcommand{\YY}{\boldit{Y}}
\newcommand{\ZZ}{\boldit{Z}}
\newcommand{\UU}{\boldit{U}}
\newcommand{\QQ}{\boldit{Q}}
\newcommand{\PP}{\boldit{P}}
\newcommand{\hh}{\boldit{h}}
\title[Derived hyperquot schemes]{Derived hyperquot schemes}
\author{Sergej Monavari, Emanuele Pavia, Andrea T. Ricolfi}
\keywords{Quot schemes, derived algebraic geometry, deformation theory}
\begin{document}
\begin{abstract}
We define a derived enhancement of the \emph{hyperquot scheme} (also known as \emph{nested Quot scheme}), which classically parametrises flags of quotients of a perfect coherent sheaf on a projective scheme. We prove it is representable by a derived scheme, and we compute its global tangent complex. As an application, we provide a natural obstruction theory on the classical hyperquot scheme. The latter recovers the virtual fundamental class recently constructed by the first and third author in the context of the  enumerative geometry of hyperquot schemes on smooth projective curves.
\end{abstract}

\maketitle
{\hypersetup{linkcolor=black}\tableofcontents}

\section{Introduction}

\subsection{Overview}
Let $X$ be a projective scheme over an algebraically closed field $\Bbbk$ of characteristic 0. Fix a coherent sheaf $\CE$ on $X$ and an integer $l \geqslant 1$. The \emph{hyperquot scheme}\footnote{The hyperquot scheme is also known in the literature as the \emph{nested Quot scheme}, see \cite{monavari2024hyperquot} and the references therein.} 
\[
\Quot_X^{[l]}(\CE)
\]
is the fine moduli space
whose $B$-points, for $B$ an arbitrary $\Bbbk$-scheme, 
classify isomorphism classes of $B$-flat families of quotients
\[
\begin{tikzcd}
\CE_B \arrow[two heads]{r} & 
\CT_1 \arrow[two heads]{r} & 
\CT_2 \arrow[two heads]{r} &
\cdots \arrow[two heads]{r} & 
\CT_l
\end{tikzcd}
\]
in $\Coh(X\times B)$. When $\CE=\OO_X$, this space coincides with the \emph{nested Hilbert scheme}, a widely studied and yet still pretty mysterious moduli space. If $l=1$ (i.e.~when the space is the usual Quot scheme \cite{Grothendieck_Quot,Nit}), it is classically known (see e.g.~\cite[Thm.~6.4.9]{fga_explained}) that the deformation and (higher) obstruction spaces at a point $p = [\CE \onto \CT]\in \Quot_X^{[1]}(\CE)$ are given by the cohomology groups
\begin{align}\label{eqn: obstr}
\Hom_{X}(\CK, \CT),\,\,\,\, \Ext^{\geqslant 1}_X(\CK, \CT),
\end{align}
where $\CK=\ker(\CE \onto \CT)$. In particular, even when the underlying scheme $X$ is smooth,  the presence of nontrivial  obstruction spaces reflects the singularity of $ \Quot_X^{[1]}(\CE)$.

Derived algebraic geometry is the `correct' modern framework to formulate and suitably deal with moduli problems \emph{and} their deformation theory at once. In fact, according to Kontsevich's \emph{hidden smoothness principle} \cite{kontsevich_94}, under reasonable assumptions, moduli spaces which are singular as schemes should be enhanced to smooth \emph{derived} moduli spaces. A key example of this philosophy is the \emph{derived Quot scheme} 
\[
\RQuot_X^{[1]}(\CE),
\]
two versions\footnote{See \cite{BSY_derived1,borisov2023shiftedsymplecticstructuresderived} for a general machinery clarifying the relation among the two constructions.} of which have already been constructed: the first by Ciocan-Fontanine--Kapranov in the framework of dg-schemes \cite{DerivedQuot}, the second by Adhikari in the language of commutative differential graded algebras \cite{DerivedQuotNew}, both under the smoothness assumption on $X$. For both enhancements, the \emph{derived} tangent complex at a point $[\CE \onto \CT]\in \RQuot_X^{[1]}(\CE)$ is represented by $\RHom_X(\CK, \CT)$, a perfect complex of vector spaces whose cohomologies recover the usual deformation-obstruction spaces \eqref{eqn: obstr}.

We point out that enhancements of \emph{smooth} schemes are also of great interest: we refer the reader to Jiang's works \cite{derived-grassmannians-schur} and \cite{derived-projectivizations} for the theory of derived Grassmannians and derived projectivisations of bundles.

\smallbreak
In this paper we extend the results of \cite{DerivedQuot,DerivedQuotNew} by constructing  a derived enhancement $ \RQuot_X^{[l]}(\CE)$ of the classical hyperquot scheme $ \Quot_X^{[l]}(\CE)$ in the general case of flags of quotients of arbitrary length $l\geqslant 1$, and in doing so we relax the assumptions on $X$, only assuming it to be projective (not necessarily smooth), and under the mild requirement that $\CE$ be \emph{perfect} as a coherent sheaf (cf.~\Cref{sec:Amplifications} for the definition of perfectness and for a strategy to relax the projectivity assumption on $X$). As an application of our novel construction, we  explicitly compute the  \emph{global} derived tangent complex of $\RQuot_X^{[l]}(\CE) $ in terms of (universal) homological data attached to $(X,\CE,l)$. Furthermore, by functoriality of tangent complexes, we exhibit a (global) obstruction theory on $\Quot_X^{[l]}(\CE)$, which was lacking in the literature.\footnote{To the best of our knowledge, even a `punctual' version of such obstruction theory, namely a \emph{tangent-obstruction theory} in the sense of \cite[Ch.~6]{fga_explained}, is new in this level of generality.}

\smallbreak
In the next section, we state our results in greater detail.

\subsection{Main results}
We work in the language of commutative differential graded algebras, which provides a suitable framework for derived algebraic geometry in characteristic 0, following \cite{hag1}. A \emph{derived moduli problem} is a functor
\begin{equation*}
\begin{tikzcd}
\cdga \arrow{r}{\boldit{F}} & \Spaces
\end{tikzcd}
\end{equation*}
from the $\myInfty$-category of connective commutative differential graded algebras to the $\myInfty$-category of spaces. We say that $\boldit{F}$ is a \emph{derived enhancement} of a classical  moduli problem 
\begin{equation*}
\begin{tikzcd}
\mathsf{Alg}_{\Bbbk}\arrow{r}{F} & \Sets
\end{tikzcd}
\end{equation*}
if, for $A\in \mathsf{Alg}_{\Bbbk} \subsetneq \cdga$ a classical commutative $\Bbbk$-algebra, we have that $\boldit{F}(A)$ is homotopy equivalent to $F(A)$.

\subsubsection{Derived enhancement}
Our first main result is the construction of a derived enhancement of $\Quot_X^{[l]}(\CE)$.

\begin{thm}[{\Cref{thm:truncation}}]
\label{MAIN-THM-DERIVED-ENHANCEMENT}
Let $X$ be a projective scheme, $\CE \in \Coh(X)$ a perfect coherent sheaf, and $l$ a positive integer. Then the hyperquot scheme $\Quot_X^{[l]}(\CE)$ admits a natural derived enhancement $\RQuot_X^{[l]}(\CE)$.
\end{thm}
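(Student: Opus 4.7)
The plan is to exhibit $\RQuot_X^{[l]}(\CE)$ as an explicit derived moduli functor and prove its representability by an inductive fibre-product argument that bootstraps off the known $l=1$ case.

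First I would define, for every $A \in \cdga$, the space $\RQuot_X^{[l]}(\CE)(A)$ as the $\infty$-groupoid whose objects are length-$l$ flags
\[
\CE_A \to \CT_1 \to \CT_2 \to \cdots \to \CT_l
\]
in $\DPerf(X\times\RSpec A)$ satisfying the following derived analogues of the classical conditions: each $\CT_i$ is cohomologically concentrated in degree $0$ and is $A$-flat, each successive map is surjective on $\HH^0$, and each homotopy fibre $\CK_i = \mathrm{fib}(\CT_{i-1} \to \CT_i)$ (with $\CT_0 \defeq \CE_A$) is again a perfect complex. Functoriality in $A$ is provided by derived base change $- \otimes_A^{\LL} B$, and $\RQuot_X^{[l]}(\CE)$ so defined is manifestly an $\infty$-functor $\cdga \to \Spaces$.

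Second, I would prove representability by induction on $l$. For $l = 1$ one reinterprets the Ciocan-Fontanine--Kapranov and Adhikari constructions \cite{DerivedQuot,DerivedQuotNew}: since $\CE$ is perfect and $X$ is projective, the classical Quot scheme $\Quot_X^{[1]}(\CE)$ embeds into a Grassmannian via a high Castelnuovo--Mumford regularity twist, and the derived enhancement arises as a derived zero-locus cut out in a smooth ambient derived scheme by the universal obstruction section; crucially, smoothness of $X$ is not required here, only perfectness of $\CE$. For the inductive step, I would establish a natural equivalence
\[
\RQuot_X^{[l]}(\CE) \;\simeq\; \RQuot_X^{[l-1]}(\CE) \times_{\DPerflX{}} \RQuot_X^{[1]}\!\bigl(\CT_{l-1}^{\uni}\bigr),
\]
where $\CT_{l-1}^{\uni}$ is the universal terminal quotient of $\RQuot_X^{[l-1]}(\CE)$, and then apply the $l=1$ representability to the relative derived Quot over this base. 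Since derived schemes are closed under derived fibre products, this yields the result.

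Third, the enhancement property follows by checking that on a classical $\Bbbk$-algebra $A$, the flatness and cohomological concentration conditions force each $\CT_i$ to be an ordinary $A$-flat coherent sheaf on $X\times\Spec A$, each map to be an honest surjection, and the homotopy fibres to coincide with ordinary kernels — so that $\pi_0 \RQuot_X^{[l]}(\CE)(A)$ naturally recovers $\Quot_X^{[l]}(\CE)(A)$.

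The main obstacle will be the inductive step: namely, verifying that the universal terminal quotient $\CT_{l-1}^{\uni}$ remains perfect and suitably flat over the base $\RQuot_X^{[l-1]}(\CE)$, so that the relative derived Quot scheme over it is itself representable and the fibre-product formula holds coherently. This propagation of perfectness throughout the tower is where the hypothesis that $\CE$ is perfect is essential, compensating for the absence of any smoothness assumption on $X$.
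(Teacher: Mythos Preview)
Your proposal has a genuine gap at the base case. You invoke \cite{DerivedQuot,DerivedQuotNew} for $l=1$ and assert that ``smoothness of $X$ is not required here, only perfectness of $\CE$,'' but both of those references explicitly assume $X$ smooth. Removing that hypothesis is precisely part of what this paper contributes, even when $l=1$; you cannot cite it away. The paper's route avoids this circularity by not bootstrapping from any prior derived Quot construction at all: it embeds $\RQuot_X^{[l]}(\CE)$ as an \emph{open} substack of the stack $\DPerf^{[l]}(X)_{\CE/}$ of flags of perfect complexes with fixed source, establishes local geometricity of the latter directly from Pandit's result that $\DPerf(X)$ is locally geometric for any proper scheme, and then identifies the truncation with the classical hyperquot scheme by a direct homotopy-group computation (Serre's long exact sequence plus the vanishing of negative $\Ext$ groups between honest sheaves).

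The inductive step is also underspecified. Your fibre-product formula is not well-formed as written: $\CT_{l-1}^{\uni}$ lives on $\RQuot_X^{[l-1]}(\CE)\times X$, not on $X$, so ``$\RQuot_X^{[1]}(\CT_{l-1}^{\uni})$'' is undefined. What you need is a \emph{relative} derived Quot scheme over the derived base $\RQuot_X^{[l-1]}(\CE)$, and representability of that is strictly stronger than the absolute $l=1$ statement over $\Spec\Bbbk$. Finally, your enhancement check only addresses $\pi_0$; the paper has to verify separately that the higher homotopy of $\RQuot_X^{[l]}(\CE)(A)$ vanishes for classical $A$, which is where the surjectivity of the quotient maps is used in an essential way.
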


The idea behind the proof of \Cref{MAIN-THM-DERIVED-ENHANCEMENT} is the following. First, we define the  derived stack $\DPerf^{[l]}(X)_{\CE/}$ of length $l+1$ flags of perfect complexes on $X$ with fixed source $\CE$ (this uses perfectness of $\CE$). Then we define the \emph{derived hyperquot functor}
\[
\begin{tikzcd}
\RQuot_X^{[l]}(\CE) \colon \cdga \arrow{r} & \Spaces
\end{tikzcd}
\]
as the (open) subfunctor of $\DPerf^{[l]}(X)_{\CE/}$ parametrising flat families of surjective morphisms (in the derived sense, cf.~\Cref{sec:flatness-surj}). We prove in \Cref{sec:geometricity-results} a series of results on the \emph{local geometricity} of several auxiliary derived stacks, including $\DPerf^{[l]}(X)_{\CE/}$, leading to the local geometricity of the derived stack $\RQuot_X^{[l]}(\CE)$.
Then, we show that $\RQuot_X^{[l]}(\CE)$ has `no higher homotopy' (cf.~{\Cref{thm:truncation}}), and that in particular it is a derived enhacement of the classical hyperquot functor $\mathsf{Quot}_X^{[l]}(\CE)$. From this, we deduce in \Cref{RQuot-is-derived-scheme} that $\RQuot_X^{[l]}(\CE)$ is representable by a \emph{derived scheme}.

\subsubsection{Tangent complex}
Our second main result is an explicit (in terms of universal data) computation of the global tangent complex of the derived enhancement of \Cref{MAIN-THM-DERIVED-ENHANCEMENT}. A point-wise version of this result, for the case $l=1$, was proved by Adhikari \cite{DerivedQuotNew} (see also \cite{DerivedQuot}).

\begin{thm}[\Cref{prop:tang-cplx-Q}]\label{MAIN-THEOREM-TG-CPLX}
Let $X$ be a projective scheme, $\CE \in \Coh(X)$ a perfect coherent sheaf, and $l$ a positive integer. The tangent complex of $\QQ = \RQuot_X^{[l]}(\CE)$ is
\[
\BT_{\QQ} \simeq \Cone\left(\bigoplus_{1\leqslant i \leqslant l} \RRlHom_{\boldit{\pi}} (\SE_i^{\uni},\SE_i^{\uni})\xrightarrow{~~~\Upsilon~~~}  \displaystyle\bigoplus_{1\leqslant i \leqslant l} \RRlHom_{\boldit{\pi}}(\SE_{i-1}^{\uni},\SE_{i}^{\uni})\right),
\]
where 
\begin{equation*}
\begin{tikzcd}
\SE_0^{\uni} = \CE_{\QQ}\arrow[two heads]{r} & \SE_1^{\uni} \arrow[two heads]{r} & \SE_2^{\uni} \arrow[two heads]{r} & \cdots \arrow[two heads]{r} & \SE_l^{\uni}    
\end{tikzcd} 
\end{equation*}
is the universal flag over $\QQ \times X$, $\boldit{\pi}\colon \QQ \times X \to \QQ$ is the projection and $\Upsilon$ is defined in \eqref{Upsilon-map}.
\end{thm}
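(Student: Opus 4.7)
The plan is to exploit that, by construction, $\QQ \defeq \RQuot_X^{[l]}(\CE)$ has been realised as an open derived substack of the ambient stack $\boldit{P} \defeq \DPerf^{[l]}(X)_{\CE/}$. Since open immersions induce equivalences on tangent complexes, $\BT_\QQ \simeq \BT_{\boldit{P}}\vert_\QQ$, so it suffices to compute $\BT_{\boldit{P}}$ globally and restrict. The key step is to present $\boldit{P}$ as the homotopy fibre of the source-evaluation morphism $\mathrm{ev}_0 \colon \DPerf^{[l]}(X) \to \DPerf(X)$, $(\CE_0 \to \cdots \to \CE_l)\mapsto \CE_0$, over the $\Bbbk$-point $[\CE]$. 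Compatibility of the tangent complex with homotopy pullbacks then yields a fibre sequence
$$
\BT_{\boldit{P}} \longrightarrow \BT_{\DPerf^{[l]}(X)}\vert_{\boldit{P}} \longrightarrow \mathrm{ev}_0^{\ast}\,\BT_{\DPerf(X)}\vert_{\boldit{P}},
$$
reducing the problem to computing $\BT_{\DPerf^{[l]}(X)}$ and identifying the source-evaluation differential explicitly.

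To compute $\BT_{\DPerf^{[l]}(X)}$, I would proceed by induction on $l$, presenting the stack of $(l+1)$-flags as an iterated homotopy pullback
$$
\DPerf^{[l]}(X) \simeq \DPerf^{[l-1]}(X) \times_{\DPerf(X)} \DPerf^{[1]}(X)
$$
along the target/source evaluations, with base $l=0$ given by the Toen--Vaqui\'e identification $\BT_{\DPerf(X),\CE}\simeq \RRlHom_{\boldit{\pi}}(\CE,\CE)[1]$, and with arrow-stack ($l=1$) case providing the standard fibre sequence whose differential is $(f_0,f_1)\mapsto f_1\phi_1^{\uni} - \phi_1^{\uni} f_0$. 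Iterating, one obtains a fibre sequence
$$
\BT_{\DPerf^{[l]}(X)} \longrightarrow \bigoplus_{0\leqslant i\leqslant l}\RRlHom_{\boldit{\pi}}(\SE_i^{\uni},\SE_i^{\uni})[1] \xrightarrow{\;\widetilde{\Upsilon}\;} \bigoplus_{1\leqslant i\leqslant l}\RRlHom_{\boldit{\pi}}(\SE_{i-1}^{\uni},\SE_i^{\uni})[1],
$$
where $\widetilde{\Upsilon}$ is componentwise the difference of pre- and post-composition with the universal arrows $\phi_i^{\uni}$.

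Combining the two fibre sequences and observing that $\mathrm{ev}_0$ induces on tangent complexes precisely the projection onto the $i=0$ summand $\RRlHom_{\boldit{\pi}}(\SE_0^{\uni},\SE_0^{\uni})[1] = \RRlHom_{\boldit{\pi}}(\CE_{\boldit{P}},\CE_{\boldit{P}})[1]$, taking homotopy fibres kills exactly that summand, and the induced map on the remaining direct sum is $\Upsilon[1]$, with $\Upsilon$ as in \eqref{Upsilon-map}. Rotating the resulting fibre sequence one step identifies $\BT_{\boldit{P}}\simeq \Cone(\Upsilon)$, and restricting to $\QQ\subseteq \boldit{P}$ concludes. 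In my view, the main technical obstacle is the inductive step: formally producing the fibre sequence for $\BT_{\DPerf^{[l]}(X)}$ is standard, but pinning down the boundary morphism coherently, so that the explicit formula for $\widetilde{\Upsilon}$ matches \eqref{Upsilon-map} with the correct signs and functorial compatibilities threaded through the iterated fibre-product presentation, is the delicate part of the argument.
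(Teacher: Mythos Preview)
Your proposal is correct and arrives at the same identification, but the route is genuinely different from the paper's. You pass through the ambient stack $\boldit{P}=\DPerf^{[l]}(X)_{\CE/}$, compute $\BT_{\DPerf^{[l]}(X)}$ by an inductive fibre-product presentation of the flag stack over the arrow stack, and then take the fibre of $\mathrm{ev}_0$ to strip off the $i=0$ summand $\RRlHom_{\boldit{\pi}}(\CE_{\QQ},\CE_{\QQ})[1]$, leaving precisely $\Cone(\Upsilon)$. The paper instead never computes $\BT_{\DPerf^{[l]}(X)}$ and never argues by induction on $l$: it works directly with the forgetful map $\hh\colon\QQ\to\DPerf(X)_\fl^{\times l}$ remembering only the quotients $(\CE_1,\ldots,\CE_l)$, and packages the whole calculation into a single commutative diagram (their \eqref{diag:double-cartesian}) exhibiting $\DPerf^{[l]}(X)_{\CE/,\fl}$ and $\DPerf^{[l]}(X)_\fl$ as pullbacks of the $l$-fold source-and-target map $u_l\colon\DPerf^{[1]}(X)_\fl^{\times l}\to\DPerf(X)_\fl^{\times 2l}$ along a diagonal-type embedding. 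Base change then gives $\BT_{\hh}\simeq f_\CE^\ast\BT_{u_l}$ in one stroke from \Cref{prop:TANGENT-source-and-target}, and the transitivity triangle for $\hh$ produces $\Upsilon$ as the pullback of an explicit map $\Gamma$ on the product of arrow stacks. The payoff of the paper's organisation is precisely that it sidesteps the coherence issue you flag as delicate: the signs and compatibilities in $\Upsilon$ are read off from $\Gamma$ and the diagonal map rather than threaded through an induction. Your route, on the other hand, is conceptually cleaner (it makes transparent that fixing the source $\CE$ kills one endomorphism summand) and would adapt more readily to variants where the flag combinatorics change.
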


The main ingredients in the proof of \Cref{MAIN-THEOREM-TG-CPLX} are the computation of the relative tangent complex of the `source-and-target map'
\[
\begin{tikzcd}
\DPerf^{[1]}(X) \arrow{rr}{\langle s,t\rangle} && \DPerf(X)^{\times 2},   
\end{tikzcd}
\]
cf.~\Cref{prop:TANGENT-source-and-target}, and the functoriality of tangent complexes in derived geometry, applied to a suitable map, namely the map $\hh \colon \QQ \to \DPerf(X)^{\times l}$ sending a flag to underlying the $l$-tuple of quasicoherent sheaves.

\subsubsection{Obstruction theory}
Our third main result describes a natural obstruction theory, in the sense of Behrend--Fantechi \cite{BFinc}, on the classical hyperquot scheme.

\begin{thm}[\Cref{main-theorem-obs-th}]\label{MAIN-THEOREM-OBS-TH}
Let $X$ be a projective scheme, $\CE \in \Coh(X)$ a perfect coherent sheaf, and $l$ a positive integer. Set $Q=\Quot_X^{[l]}(\CE)$. Let $\CE_{Q} \onto \ST_\bullet^{\uni}$ be the universal flag, and let $\pi\colon Q \times X \to Q$ be the projection. Then $Q$ admits an obstruction theory
\[
\BE_Q = \Cone \left(
\displaystyle\bigoplus_{1\leqslant i \leqslant l} \RRlHom_{\pi}(\ST_{i-1}^{\uni},\ST_{i}^{\uni})^\vee \xrightarrow{~~~~~} \displaystyle\bigoplus_{1\leqslant i \leqslant l} \RRlHom_{\pi}(\ST_{i}^{\uni},\ST_{i}^{\uni})^\vee
\right)[-1]
\xrightarrow{~~~~~} \BL_Q.
\]
\end{thm}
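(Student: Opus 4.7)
The plan is to deduce the desired obstruction theory directly from the derived enhancement $\QQ = \RQuot_X^{[l]}(\CE)$ provided by \Cref{MAIN-THM-DERIVED-ENHANCEMENT} and its tangent complex computed in \Cref{MAIN-THEOREM-TG-CPLX}. The strategy rests on the standard principle in derived algebraic geometry that the closed immersion $j\colon Q \hookrightarrow \QQ$ of the classical truncation into its derived enhancement induces, via the functoriality of the cotangent complex, a morphism $j^\ast \BL_{\QQ} \to \BL_Q$ which is automatically a Behrend--Fantechi obstruction theory on $Q$.

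First I would invoke the truncation statement embedded in \Cref{MAIN-THM-DERIVED-ENHANCEMENT} to obtain the classical truncation map $j$ together with the identification of $(j\times \id_X)^\ast\SE_\bullet^{\uni}$ with the classical universal flag $\ST_\bullet^{\uni}$. Next, starting from \Cref{MAIN-THEOREM-TG-CPLX} and using that dualising a cone yields a shifted cone of the dual arrow, together with the compatibility of $\RRlHom_{\boldit{\pi}}$ with base change along $j\times \id_X$ under the proper and perfect hypotheses at hand, I would arrive at the explicit identification
\[
j^\ast\BL_{\QQ}\;\simeq\; \Cone\!\left(\bigoplus_{1\leqslant i \leqslant l}\RRlHom_{\pi}(\ST_{i-1}^{\uni},\ST_i^{\uni})^\vee \xrightarrow{\,\Upsilon^\vee\,} \bigoplus_{1\leqslant i\leqslant l}\RRlHom_{\pi}(\ST_i^{\uni},\ST_i^{\uni})^\vee\right)[-1] \;=\; \BE_Q,
\]
so that the canonical truncation map $j^\ast\BL_{\QQ}\to \BL_Q$ provides precisely the morphism $\BE_Q \to \BL_Q$ appearing in the statement.

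Then I would verify that this morphism is an obstruction theory in the sense of Behrend--Fantechi. Perfectness of $\BE_Q$ follows from the relative perfectness of each $\ST_i^{\uni}$ with respect to $\pi$ and the properness of $\pi$ (since $X$ is projective), which together ensure that each $\RRlHom_{\pi}(\ST_i^{\uni},\ST_j^{\uni})$ is a perfect complex on $Q$; taking duals, shifts and cones preserves perfectness. The Behrend--Fantechi conditions --- namely that $\HH^0(\BE_Q) \to \HH^0(\BL_Q)$ be an isomorphism and $\HH^{-1}(\BE_Q) \to \HH^{-1}(\BL_Q)$ be surjective --- are a general consequence of $j$ being the classical truncation of a derived enhancement: formally, the truncation map is an isomorphism on degree-zero cohomology of cotangent complexes, while in degree $-1$ the derived cotangent complex can only enhance, and hence surjects onto, the obstruction space of the classical moduli problem.

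The main obstacle I expect is making this general DAG-to-classical comparison fully rigorous within the cdga framework of \cite{hag1}: specifically, one needs to confirm that $\QQ$ is locally of finite presentation (an input already required for the tangent-complex calculation in \Cref{MAIN-THEOREM-TG-CPLX}), and to carefully cite or extract from the literature (for instance from the work of Sch\"urg--To\"en--Vezzosi on derived enhancements and obstruction theories) the precise statement that the truncation map of cotangent complexes yields a Behrend--Fantechi obstruction theory in our setting. Once these foundational facts are secured, the argument reduces to the explicit functorial identifications indicated above.
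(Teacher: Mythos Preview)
Your proposal is correct and follows essentially the same approach as the paper: both arguments pull back $\BL_{\QQ}$ (computed via \Cref{MAIN-THEOREM-TG-CPLX}) along the truncation $\iota\colon Q\hookrightarrow\QQ$, identify the result with $\BE_Q$ via base change and the comparison of universal flags, and then invoke the Sch\"urg--To\"en--Vezzosi principle (packaged in the paper as \Cref{prop:OT-truncated}) that $\iota^\ast\BL_{\QQ}\to\BL_Q$ is an obstruction theory. Your anticipated ``main obstacle'' is thus already dispatched by \Cref{prop:OT-truncated}, and your discussion of perfectness of $\BE_Q$ is extraneous since the statement only asserts an obstruction theory, not a perfect one.
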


The strategy for the proof of \Cref{MAIN-THEOREM-OBS-TH} is the following. By \Cref{MAIN-THEOREM-TG-CPLX}, we know the cotangent complex $\BL_{\QQ} = \BT_{\QQ}^\vee$ explicitly, and we confirm that $\BE_Q\simeq \iota^\ast \BL_{\QQ}$, where $\iota \colon Q \into \QQ$ is the canonical inclusion. On the other hand, the canonical morphism $\iota^\ast \BL_{\QQ} \to \BL_Q$ is naturally an obstruction theory by \Cref{prop:OT-truncated}.

The obstruction theory of \Cref{MAIN-THEOREM-OBS-TH} generalises the classical obstruction theory of the ordinary Quot scheme $\Quot_X^{[1]}(\CE)$ (see e.g.~\cite[Thm.~4.6]{Gillam}), and appears to be new in the literature. When $X$ is a smooth projective curve, $\BE_Q$ is perfect in $[-1,0]$ and therefore, by the work of Behrend--Fantechi  \cite{BFinc}, it induces a \emph{virtual fundamental class} $[Q]^{\vir} \in A_\ast (Q)$, which we checked to coincide with the virtual fundamental class constructed by the first and third author in \cite{monavari2024hyperquot}, cf.~\Cref{sec:curve-case}.

\subsection{Amplifications}
\label{sec:Amplifications}
We now briefly comment on our working assumptions. Projectivity of $X$ is used in a twofold way: first of all, to ensure the representability of the (classical) hyperquot functor, and secondly, properness is needed to ensure that the derived stack $\DPerf(X)$ of perfect complexes on $X$ is locally geometric (cf.~\Cref{thm:perf_locally_geometric} and \Cref{rmk:perfect-stack}\ref{perf-1}). Perfectness of $\CE$ means that $\CE$, when viewed as an object of $\derived_{\coh}^{\bounded}(X)$, lies in the image of the canonical functor $\Perf(X) \to \derived_{\coh}^{\bounded}(X)$, which  ensures that $\CE$ defines a $\Bbbk$-point of $\DPerf(X)$. Note that, if $X$ is smooth, any $\CE \in \Coh(X)$ is automatically perfect, as the functor $\Perf(X) \to \derived_{\coh}^{\bounded}(X)$ is, in this case, an exact equivalence of triangulated categories \cite[\href{https://stacks.math.columbia.edu/tag/0FDC}{Tag 0FDC}]{stacks-project}. 

In the remainder of this section we discuss what adjustments are needed in order to extend our results to the case where $X$ is only quasiprojective or over a general base not necessarily of characteristic 0.

\subsubsection{Quasiprojective case}
\label{subsec:quasi-proj}
Let $X$ be a  quasiprojective $\Bbbk$-scheme, $\CE \in \Coh(X)$ a coherent sheaf, and $l$ a positive integer. The classical hyperquot functor attached to $(X,\CE,l)$, recalled in \Cref{sec:nested-quot-classical}, is still representable if one further assumes that
\[
\begin{tikzcd}
    \Supp(\CT_i) \arrow[hook]{r} & X\times B \arrow{r} & B
\end{tikzcd}
\]
is proper for all $i=1,\ldots,l$. The construction of a derived enhancement $\QQ$ of the hyperquot scheme $Q=\Quot_X^{[l]}(\CE)$ carries over to the quasiprojective case, just as in Adhikari's paper \cite{DerivedQuotNew}, if one takes into account the following technical modifications:
\begin{itemize}
    \item [\mylabel{quasi-1}{(1)}] One should replace all the stacks of perfect complexes and flags thereof (cf.~\Cref{subsec:derived-flags}) by their pseudo-perfect version \cite[Def.~2.2]{DerivedQuotNew}, and confirm their local geometricity. 
    \item [\mylabel{quasi-2}{(2)}] In the definition of the derived hyperquot functor \eqref{derived-quot-functor}, one should require each $\CE_i$ to have proper support over $A$, which means that $\Supp(\CE_i)$, defined to be the union of the classical supports $\Supp(\CH^j(\CE_i))$, is proper over $\Spec \HH^0(A)$ for every $i=1,\ldots,l$.
\end{itemize}

As soon as the above changes are made, the construction of the derived enhancement of $\Quot_X^{[l]}(\CE)$ carries over verbatim, and no further technical modification is needed for the material presented in \Cref{sec:proof-of-main-thm} regarding the cotangent complex.

\subsubsection{Arbitrary base}
One can naturally generalise our constructions over an arbitrary base, possibly of positive characteristic. In this setting,  commutative differential graded algebras \emph{do not} capture in general the correct homotopy theory of commutative rings. However, one can still define a theory of derived algebraic geometry over a base ring $A$ (possibly of positive characteristic) by replacing the $\myInfty$-category of commutative differential graded $A$-algebras with the $\myInfty$-category $\mathsf{Alg}_{\BE_{\myInfty}}(\Mod_{HA})$ of \textit{$\BE_{\myInfty}$-$HA$-ring spectra} (see e.g.~\cite[Ch.~7]{ha}). This allows one to define a derived moduli functor $\RQuotlX{l}$ for any scheme $X$ defined over an arbitrary $\BE_{\myInfty}$-ring  and for any  perfect sheaf $\CE$ over $X$.

Since the proof of \cref{MAIN-THM-DERIVED-ENHANCEMENT} relies on the representability of the classical hyperquot scheme, which holds under very mild assumptions of the base ring \cite{Grothendieck_Quot}, and on the local geometricity of the derived stack $\DPerf(X)$, which for $X$ a classical proper scheme holds over any derived G-ring (\cite[Notation 1.5.9 and Thm.~4.4.1]{Pandit2011ModuliPI}), our result holds over any derived G-ring as well. This includes every classical G-ring -- in particular any field (of arbitrary characteristic), any Dedekind domain of characteristic $0$, and any localisation or finitely generated algebra obtained from G-rings.

\subsection{Relation to existing literature}
Hyperquot schemes and their obstruction theories are ubiquitous in enumerative geometry and theoretical physics. For $X$ a smooth projective curve, the hyperquot scheme $\Quot_X^{[l]}(\OO_X^{\oplus r})$ is a compactification of the space of maps from $X$ to the partial flag variety, and  was effectively exploited to compute the quantum cohomology ring of partial flag varieties, see e.g.~\cite{CF_quantum_duke, CF_quantum_cohom_IMRN, CF_quantum_Trans, Chen_quantum}. The motives of the  hyperquot schemes  $\Quot_X^{[l]}(\CE)$ were computed, in the smooth unobstructed case, by the first and third author in \cite{MR_nested_Quot, monavari2024hyperquot}, see also Chen for $X=\BP^1$ and $\CE=\OO_X^{\oplus r}$ \cite{Chen-hyperquot},  Cheah's work on nested Hilbert schemes \cite{MR2716513,MR1616606,MR1612677} and \cite{Mon_double_nested,double-nested-1} on double nested Hilbert schemes. 

Quot schemes of smooth surfaces and their virtual invariants have also been studied extensively in recent years, see for instance \cite{MR4220750,zbMATH07344916,Lim:2020aa,stark2024quot} and the references therein. The case $\CE=\OO_X$, corresponding to nested Hilbert schemes on surfaces, has also drawn great attention in enumerative geometry \cite{zbMATH07184851,Sheshmani-Yau,zbMATH07159378,zbMATH07270076,BFT_fisica,BFT_math}. In these works, the authors consider a perfect obstruction theory which differs from the one of \Cref{MAIN-THEOREM-OBS-TH}. Nevertheless, we believe it should be recovered from a  nested version of the \emph{derived Hilbert scheme} of \cite{DerivedHilbert}.

Quot schemes of points on smooth 3-folds are the key characters in higher rank Donaldson--Thomas theory. Here a subtlety arises: the `local model' for these spaces, namely $\Quot_{\BA^3}(\OO^{\oplus r},\mathsf{points})$, is a global critical locus $\crit(f)$ \cite{BR18}, and therefore carries the \emph{critical obstruction theory} $\iota^\ast \BL_{\Rcrit(f)} \to \BL_{\crit(f)}$, where $\iota \colon \crit(f) \into \Rcrit(f)$ is the inclusion. It is proved in \cite[Thm.~C]{d-critical_quot} that, when $r=1$, this agrees with the obstruction theory (coming from derived geometry in a suitable sense) of $\Hilb(\BA^3,\mathsf{points})$ viewed as a moduli space of ideal sheaves; a related conjecture can be found in \cite[Conj.~2.12]{cazzaniga2020framed}, where the Quot scheme of points on $X=\BA^m$ for $m\geqslant 3$ was proved to be equivalent to the moduli space of \emph{framed sheaves} on $\BP^m$. See also \cite{BR18,Virtual_Quot,FMR_K-DT,cazzaniga2020higher,ricolfi2019motive,fasola2023tetrahedron} for a virtual (and refined) enumerative study of these Quot schemes, and the series of works \cite{LocalDT,Ricolfi2018,DavisonR} exploring, again via Quot schemes, the `fixed-curve contributions' to (rank 1) Donaldson--Thomas invariants.

\subsection{Organisation of contents}
The paper is organised as follows. \Cref{sec:background} is devoted to background in derived algebraic geometry: derived stacks and their truncation (Sections \ref{sec:derivedstacks} and \ref{sec:truncation}), cotangent complexes and obstruction theories (Sections \ref{sec:cotg-complex} and \ref{sec:ob-theories}), quasicoherent sheaves on derived stacks (\Cref{sec:qcoh-derivestacks}). In \Cref{sec:derived-nested-quot}, after reviewing the definition of the classical hyperquot functor (\Cref{sec:nested-quot-classical}), we introduce the derived stack $\DPerf^{[l]}(\XX)$ parametrising flags of perfect complexes of length $l+1$ on a derived stack $\XX$, along with other `auxiliary' stacks (\Cref{subsec:derived-flags}). We prove crucial geometricity results about them in \Cref{sec:geometricity-results}, and we use these results in \Cref{subsec:derived-quot-functor} to confirm that the derived hyperquot functor, introduced in \eqref{derived-quot-functor}, is a derived scheme. In \Cref{sec:source-target-tangent} we identify the relative tangent complex to the `source and target map' $\DPerf^{[1]}(\XX) \to \DPerf(\XX)^{\times 2}$, and we use this computation crucially in \Cref{sec:cotangent-complex-quot} to prove \Cref{MAIN-THEOREM-TG-CPLX}. \Cref{sec:proof-of-OT} is devoted to the proof of \Cref{MAIN-THEOREM-OBS-TH}, and finally in \Cref{sec:curve-case} we discuss the special case of curves, comparing our results to other results in the literature.

\begin{conventions*}
We work over a fixed algebraically closed field $\Bbbk$ of characteristic 0.  We use cohomological conventions throughout (for graded algebras, complexes of sheaves, etc.). For the derived algebraic geometry concepts we will need in this paper (recalled in \Cref{sec:background}), we follow mainly To\"{e}n--Vezzosi \cite{hag1,hag2}, To\"{e}n \cite{Toen-higher-and-derived}, Gaitsgory--Rozenblyum \cite{studyindag1,studyindag2} and Lurie \cite{dagviii}. We often omit deriving functors, e.g.~we write $f^\ast$, resp.~$f_\ast$, instead of $\mathbf{L}f^\ast$, resp.~$\RR f_\ast$, for $f\colon \XX \to \YY$ a morphism of derived stacks. We write $\RRlHom_{\XX}(-,-)$ to denote the quasicoherent sheaf of morphisms between quasicoherent sheaves over a derived stack $\XX$, and $\RRlHom_f(-,-)$ for the composite $\RR f_\ast \RRlHom_{\XX}(-,-)$. In the particular case where $f\colon \ZZ\to\RSpec A$ is a morphism to an affine base, $\RRlHom_f(-,-)$ computes the dg-$A$-module of global sections of $\RRlHom_{\ZZ}(-,-)$. Via the Dold-Kan correspondence, such dg-$A$-module corresponds to a simplicial abelian group, hence to a homotopy type, and  we denote both its realisation as a dg-$A$-module and its underlying space as $\RHom_{\ZZ}(-,-)$. Finally, if $\XX$ is a derived stack, $\CE \in L_{\qcoh}(\XX)$ is a quasicoherent sheaf and $\RSpec A$ is an affine derived scheme, we denote by $\XX_A$ the product $\XX \times \RSpec A$ and by $\CE_A$ the pullback of $\CE$ along the projection $\XX_A \to \XX$. 
\end{conventions*}

\section{Derived geometry background}
\label{sec:background}

\subsection{Affine derived schemes}\label{sec: affine derived}
Let $\mathsf{cdga}_{\Bbbk}$ denote the ${\myInfty}$-category of commutative differential graded $\Bbbk$-algebras. In particular, for every $A \in \mathsf{cdga}_{\Bbbk}$, the $\Bbbk$-vector space $\HH^0(A)$ is an ordinary commutative $\Bbbk$-algebra, and each $\HH^j(A)$ is an $\HH^0(A)$-module.

Inside $\mathsf{cdga}_{\Bbbk}$ there is the full sub-${\myInfty}$-category $\cdga \subset \mathsf{cdga}_{\Bbbk}$ spanned by \textit{connective} commutative differential graded $\Bbbk$-algebras, i.e.~commutative differential graded $\Bbbk$-algebras whose cohomology vanishes in positive degrees.  Its opposite category 
\[
\dAff = \smash{\left(\cdga\right)^{\mathrm{op}}}
\]
is, by definition, the ${\myInfty}$-category of \emph{derived affine schemes}, and we denote the object  corresponding to a commutative differential graded $\Bbbk$-algebra $A$ by $\RSpec A$.

To any derived affine scheme $\RSpec A$ one can associate its \textit{underlying classical scheme} 
\[
t_0\RSpec A=\Spec \mathrm{H}^0(A).
\]
The operation $t_0$ is a functor $\dAff \to \mathsf{Aff}_{\Bbbk} = \mathsf{Alg}_{\Bbbk}^{\mathrm{op}}$ with target the standard category of affine schemes, and is left adjoint to a (fully faithful) functor $i \colon \mathsf{Aff}_{\Bbbk} \into \dAff$. We say that a derived affine scheme $\RSpec A$ is \emph{discrete} if $\RSpec A\simeq t_0\RSpec A$, where we view $t_0\RSpec A$ as an object of $\dAff$ via the functor $i$.

\begin{definition}[{\cite[Sec.~$2.2.2$]{hag2}}]
\label{def:morphismsderived}
Let $\varphi\colon\RSpec B\to\RSpec A$ be a morphism of derived affine schemes.
\begin{enumerate}
    \item [\mylabel{def-1}{(1)}] The morphism $\varphi$ is \textit{flat} if the morphism $t_0\varphi\colon t_0\RSpec B\to t_0\RSpec A$ is flat, and the canonical map
\[
\begin{tikzcd}[row sep=large,column sep=large]
\HH^i(A)\otimes_{\HH^0(A)}\HH^0(B)
\arrow[r]
&
\HH^i(B)
\end{tikzcd}
\]
is an isomorphism for every $i$.
    \item [\mylabel{def-2}{(2)}] The morphism $\varphi$ is \textit{\'etale}, \textit{smooth} or an \textit{open immersion} if it is flat and the morphism $t_0\varphi\colon t_0\RSpec B\to t_0\RSpec A$ of classical schemes is \'etale, smooth or an open immersion.
    \item [\mylabel{def-3}{(3)}] The morphism $\varphi$ is \emph{surjective} if $t_0\varphi\colon t_0\RSpec B\to t_0\RSpec A$ is surjective.
    \item [\mylabel{def-4}{(4)}] The morphism $\varphi$ is a \emph{covering} for the flat, \'etale, smooth or Zariski topology if it is a surjective morphism which is moreover flat, \'etale, smooth, or a disjoint union of open immersions respectively.
\end{enumerate}
\end{definition}

\begin{remark}
\label{remark:flatness}
In \cref{def:morphismsderived}\ref{def-1}, one could replace the second condition for flatness with the following, equivalent request: for any discrete dg-$A$-module $N$ (i.e., $\HH^j(N)\cong 0$ for $j\neq 0$), the derived tensor product $B\otimes_{A}^{\mathbf L}N$ is again discrete. We will use this criterion to detect flatness of coherent sheaves.
\end{remark}

\subsection{Derived stacks}
\label{sec:derivedstacks}
\Cref{def:morphismsderived}\ref{def-4} allows one to talk about topology and descent on $\cdga$. We shall mostly focus on \'etale descent in what follows.

Let $\Spaces$ be the ${\myInfty}$-category of spaces, i.e.~the $\myInfty$-category of topological spaces up to homotopy, cf.~\cite{htt}.
We denote by $\dSt$ the ${\myInfty}$-category of \emph{derived stacks} over $\Bbbk$, which is  the full sub-${\myInfty}$-category of the ${\myInfty}$-category of presheaves 
\[
\mathrm{Fun}\left(\cdga,\Spaces\right)
\]
spanned by sheaves for the étale topology on $\mathrm{cdga}^{\leqslant0}_{\Bbbk}$.

We say that a  derived stack $\XX$ is  a \emph{derived scheme} if it can be covered by Zariski open derived affine schemes $\UU \subset \XX$, and we denote their ${\myInfty}$-category by $\dSch\subset \dSt$.

\subsection{Truncation} \label{sec:truncation}
Denote by $\St$ the $\myInfty$-category of higher stacks \cite{Hirschowitz1998DescentePL}. The inclusion of $\myInfty$-categories $i\colon \St\hookrightarrow \dSt $ admits a left adjoint $t_0\colon \dSt\to \St$, which extends the truncation functor of derived affine schemes of  \Cref{sec: affine derived}. For a derived stack $\XX$, we call $t_0(\XX)$ the \emph{classical truncation} of $\XX$, and we say that   a derived stack  $\XX$ is a \textit{derived enhancement} of a classical (higher) stack $X$ if $t_0\XX\simeq X$.

\begin{warning}
Notice that, while the classical truncation of a derived scheme is a scheme in the ordinary sense, the classical truncation of a derived stack  is not a stack in the sense of ordinary algebraic geometry. This is because in general the image of a discrete commutative $\Bbbk$-algebra $A$ under the $\myInfty$-functor $t_0\XX$ can fail to be a $1$-groupoid. In fact, the ordinary $2$-category of classical algebraic stacks sits inside our $\myInfty$-category $\St$ as the full sub-$2$-category spanned by objects $F$ such that $F(A)$ is a $1$-truncated space for every discrete commutative algebra $A$.
\end{warning}

\subsection{Derived Artin stacks}
We say that a derived stack $\XX$ is \emph{geometric} if it is $m$-geometric, for some $m$, cf.~\cite[Def.~1.3.3.1]{hag2}. A \emph{derived Artin stack} is a geometric derived stack such that its truncation $t_0(\XX)$ is an $n$-Artin stack, for some $n$, cf.~\cite[Def.~4.4]{Toen-higher-and-derived}. We denote their $\myInfty$-category by $\dArt$. All in all, we have full inclusions 
\[\dAff \subset \dSch\subset \dArt \subset \dSt \]
of $\myInfty$-categories. 
  
For our purposes, the notion of geometricity for derived stacks is too restrictive. We say that a derived stack is \textit{locally geometric} if it has a cover by derived substacks which are geometric, cf.~\cite[Def.~2.17]{moduli-of-objects-dg}.

\subsection{Quasicoherent sheaves on derived stacks}
\label{sec:qcoh-derivestacks}

For a derived affine scheme $\RSpec A$, the  $\myInfty$-category of quasicoherent $\mathscr{O}_{\RSpec A}$-modules $L_{\qcoh}(\RSpec A)$ is by definition the $\myInfty$-category of dg-modules over $A$. It is a symmetric monoidal, presentable, stable and $\Bbbk$-linear $\myInfty$-category \cite[Sec.~7.1]{ha}. In particular, its homotopy category $\mathrm{Ho}(L_{\qcoh}(\RSpec A))$ is canonically triangulated and,  when $A$ is a discrete commutative $\Bbbk$-algebra, it agrees with the usual $\Bbbk$-linear triangulated  category $\derived(\Mod_A)$.

For a general derived stack $\boldit{X} \in \dSt$, the derived $\myInfty$-category of quasicoherent $\mathscr{O}_{\boldit{X}}$-modules $L_{\qcoh}(\boldit{X})$ is defined by étale descent as the limit of stable $\myInfty$-categories
\[
L_{\qcoh}(\boldit{X}) = \lim_{\substack{\RSpec A\to \boldit{X}}}L_{\qcoh}(\RSpec A).
\]
Intuitively, this means that a quasicoherent $\mathscr{O}_{\XX}$-module $\CM$ is a collection of quasicoherent $\mathscr{O}_{\RSpec A}$-modules $\CM_A$ for any derived affine cover, together with a homotopy-coherent system of gluing data. Again, $L_{\qcoh}(\boldit{X})$ is a symmetric monoidal, presentable, stable and $\Bbbk$-linear $\myInfty$-category.

\subsubsection{$t$-structures}
The $\myInfty$-category $L_{\qcoh}(\RSpec A)$ is equipped with an  explicit nondegenerate $t$-structure, where the $\myInfty$-category of connective (resp.~coconnective) objects $L_{\qcoh}(\RSpec A)^{\leqslant0}$ (resp.~$L_{\qcoh}(\RSpec A)^{\geqslant0}$) coincides with the $\myInfty$-category spanned by dg-modules whose cohomology vanishes in positive (resp.~negative) degrees. These two subcategories form a $t$-structure whose heart is naturally equivalent to the ordinary abelian category of $\HH^0(A)$-modules \cite[Prop.~$7.1.1.13$]{ha}.

For a general derived stack $\XX$, the $\myInfty$-category $L_{\qcoh}(\boldit{X})$ admits a canonical $t$-structure, whose $\myInfty$-category of (co)connective objects is  spanned by the quasicoherent $\mathscr{O}_{\boldit{X}}$-modules whose pullbacks along  morphisms $\RSpec A\to\boldit{X}$ are (co)connective as  dg-$A$-modules. In particular, given a flat morphism $f\colon\boldit{X}\to\boldit{Y}$ of derived Artin stacks, the pullback  $f^*\colon L_{\qcoh}(\boldit{Y})\to L_{\qcoh}(\boldit{X})$ is $t$-exact \cite[Prop.~$1.5.4$]{studyindag1}.

Given a derived stack $\XX$, 
the heart $L_{\qcoh}(\XX)^{\heartsuit}$  is always an ordinary abelian full subcategory of $L_{\qcoh}(\XX)$. Applying the ordinary machinery of truncations of objects in stable $\myInfty$-categories equipped with a $t$-structure (see \cite[Sec.~1.2.1]{ha}), for any integer $n\in\BZ$ we obtain  well-defined functors
\begin{equation*}
\begin{tikzcd}
\CH^n\defeq \tau_{\leqslant n}\circ\tau_{\geqslant n}\colon   L_{\qcoh}(\XX)\arrow[r]&L_{\qcoh}(\XX)^{\heartsuit}\subset L_{\qcoh}(\XX),
\end{tikzcd}
\end{equation*}
and we say that a quasicoherent $\mathscr{O}_{\XX}$-module $\CL$ is \emph{discrete} if $\CL\simeq\CH^0(\CL)$ in $L_{\qcoh}(\XX)$.

For a large class of nice derived stacks (e.g. derived schemes), one  has an equivalence
\[
L_{\qcoh}(\boldit{X})^{\heartsuit}\simeq\QCoh(t_0\boldit{X}),
\]
where $t_0\boldit{X}$ is its underlying  higher stack. 

\subsubsection{Flatness and surjectivity}
\label{sec:flatness-surj}
Fix  $A \in \cdga$ and a dg-module $M$ over $A$. Following \cite[Lemma $2.2.2.2$]{hag2}, we say that $M$ is \emph{flat over} $A$ if $\HH^0(M)$ is flat over $\HH^0(A)$ and the natural map
\[
\begin{tikzcd}
\HH^i(A) \otimes_{\HH^0(A)}\HH^0(M) \arrow[r]& \HH^i(M)
\end{tikzcd}
\]
is an isomorphism for ever $i$. Since dg-modules over $A$ correspond to quasicoherent sheaves on $\RSpec A$, this defines flatness for quasicoherent sheaves on affine derived schemes.

Let $f\colon \XX \to \YY$ be a morphism of derived stacks. We say that $\CF \in L_{\qcoh}(\XX)$ is flat over $\YY$ if for any discrete $\OO_{\YY}$-module $\CL$, one has that $\CF \otimes^{\mathbf{L}}f^\ast \CL$ is discrete as well. 

Let $\XX$ be a derived stack. A morphism $\CF \to \CG$ in $L_{\qcoh}(\XX)$ is \emph{surjective} if $\CH^0(\CF) \to \CH^0(\CG)$ is surjective. 

\begin{lemma}
\label{lemma:flat-is-open}
The following properties hold true.

\begin{itemize}
\item [\mylabel{openness}{\normalfont{(1)}}]
Let $f\colon Y\to S$ be a morphism locally of finite type between locally noetherian $\Bbbk$-schemes.

\begin{itemize}
\item [\mylabel{flat}{\normalfont{(i)}}] Let $\CF$ be a coherent $\OO_Y$-module. Then the locus
\[
\Set{y\in Y\mid \CF~\mathrm{is}~S\mbox{-}\mathrm{flat~at}~y} \subset Y
\]
is open.
\item [\mylabel{surjective}{\normalfont{(ii)}}] Assume $f$ is proper. Let $\vartheta\colon\CF\to\CG$ be a morphism of coherent $\OO_Y$-modules. Then the locus
\[
\Set{y\in Y\mid \vartheta_y~\mathrm{is}~\mathrm{surjective}} \subset Y
\]
is open.
\end{itemize}

\item [\mylabel{etale-local}{\normalfont{(2)}}] Both flatness and surjectivity, as defined in \Cref{sec:flatness-surj}, are local properties for the \'etale topology of derived stacks.
\end{itemize}
\end{lemma}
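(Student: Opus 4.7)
The plan is to prove the two parts separately: part (1) is a statement about classical coherent sheaves, while part (2) is a descent statement for the canonical $t$-structure on derived quasicoherent complexes.

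For (1)(i), the openness of the flat locus is a classical theorem of Grothendieck (\textit{cf.}~EGA IV$_3$, 11.1.1): any coherent sheaf on a scheme locally of finite type over a locally noetherian base has open relative flatness locus. For (1)(ii), I would argue directly: the cokernel $\CC \defeq \coker(\vartheta)$ is coherent on $Y$, and by Nakayama's lemma, surjectivity of $\vartheta_y$ is equivalent to $\CC_y=0$. Hence the surjectivity locus is exactly $Y\setminus \Supp(\CC)$, which is open since the support of a coherent sheaf is closed. (In this formulation properness of $f$ is not strictly necessary, but is of course compatible with the stated hypothesis.)

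For (2), the conceptual input is the $t$-exactness of flat (hence étale) pullback between derived stacks, recalled in \Cref{sec:qcoh-derivestacks} after \cite[Prop.~1.5.4]{studyindag1}. This forces discreteness of a quasicoherent complex to be an étale-local property. Concretely, fix the morphism $f\colon \XX \to \YY$ of the definition, let $\{\boldit{U}_i \to \XX\}$ be an étale cover, and write $f_i\colon \boldit{U}_i \to \XX \to \YY$ for the composite. Suppose each restriction $\CF|_{\boldit{U}_i}$ is flat over $\YY$. For any discrete $\CL \in L_{\qcoh}(\YY)$ one has
\[
(\CF \otimes^{\mathbf L} f^\ast \CL)|_{\boldit{U}_i} \simeq \CF|_{\boldit{U}_i} \otimes^{\mathbf L} f_i^\ast \CL,
\]
which is discrete by hypothesis; hence $\CF \otimes^{\mathbf L} f^\ast \CL$ is discrete on an étale cover and therefore discrete. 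The converse direction is immediate since composites of flat morphisms are flat. For surjectivity, $t$-exactness of étale pullback means $\CH^0$ commutes with restriction along the $\boldit{U}_i\to \XX$, and surjectivity of a morphism in the heart $L_{\qcoh}(\XX)^{\heartsuit}$ (which on a classical scheme recovers the usual abelian category of quasicoherent sheaves) is étale-local in the classical sense.

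The step I would be most careful about is justifying that the $t$-structure on the global $L_{\qcoh}(\XX)$, defined by étale descent of the affine $t$-structures, is genuinely characterised by checking (co)connectivity after restriction to any étale atlas. Once this is granted, both statements reduce to routine manipulations with derived tensor products, and no further input beyond the foundational results cited in \Cref{sec:qcoh-derivestacks} is required.
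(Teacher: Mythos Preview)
Your proposal is correct and follows essentially the same line as the paper. Two minor points of divergence are worth flagging.

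For (1)(ii), the paper's argument invokes properness of $f$ to push $\Supp(\coker\vartheta)$ down to $S$ and then pull back, whereas you observe (correctly) that the locus as literally stated is just $Y\setminus\Supp(\coker\vartheta)$, open without any properness hypothesis. Your reading matches the statement verbatim; the paper's proof appears to be aiming at the slightly different assertion that the locus of $s\in S$ over which $\vartheta$ is fibrewise surjective is open, which is what is actually used downstream in the proof of \Cref{prop:openimmersion} and does require properness. So your argument is cleaner for the statement as written, but be aware that the application needs the base-version.

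For (2), the paper handles flatness by a direct citation (\cite[Prop.~2.8.4.2]{sag}) and treats surjectivity via the reformulation that $\psi$ is surjective iff the induced map $\CH^0(\CG)\to\CH^0(\Cone(\psi))$ vanishes, then checks this on an \'etale cover using $t$-exactness of flat pullback and preservation of cones. Your argument is the same in spirit but compressed: you appeal to $t$-exactness to commute $\CH^0$ with restriction and then to \'etale-locality of epimorphisms in the heart. Both are fine; the paper's cone formulation makes the verification slightly more explicit, while yours is quicker once one accepts that the $t$-structure on $L_{\qcoh}(\XX)$ is genuinely determined \'etale-locally, the point you rightly single out as the one deserving care.
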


\begin{proof}
The proof of \ref{flat} is {\cite[\href{https://stacks.math.columbia.edu/tag/0398}{Tag 0398}]{stacks-project}}. The proof of \ref{surjective} goes as follows. Set $\CC = \coker \vartheta$. The scheme-theoretic support $Z=\Supp(\CC) \subset Y$ is closed and since $f$ is proper, $U=Y \setminus f(Z) \subset Y$ is open. But then $\CC|_{f^{-1}(U)} = 0$. Thus \ref{openness} is proved.

Let us prove \ref{etale-local}. Flatness of modules is a local condition for the flat topology by \cite[Prop.~2.8.4.2]{sag}, and thus for the \'etale topology as well. 

As for surjectivity, we need to show that if we have a derived stack $\XX$ along with a morphism $\CF \to \CG$ in $L_{\qcoh}(\XX)$, and an affine \'etale cover $\{f_i\colon \UU_i\to \XX\}$ such that the induced maps $f_i^*\psi$ are surjective, then $\psi$ is surjective too.

Notice that the surjectivity of a morphism $\psi\colon\CF \to \CG$ between  quasicoherent sheaves over a derived stack $\XX$ is equivalent to the fact that the induced morphism of discrete quasicoherent sheaves over $\XX$
\[
\begin{tikzcd}
\alpha\colon\CH^0(\CG)\arrow{r}&\CH^0(\Cone(\psi))   
\end{tikzcd}
\]
is the trivial map. By descent on the \'etale site, this can be checked on our given affine \'etale cover, i.e.~we only need to confirm that $f_i^\ast\alpha = 0$ for all $i$. Since each $f_i$ is in particular flat, its derived pullback is $t$-exact, therefore
\[
f_i^*\CH^0(\CG)\simeq\CH^0(f_i^*\CG)\quad\text{and}\quad f_i^*\CH^0(\Cone(\psi))\simeq \CH^0(f_i^*(\Cone(\psi)).
\]
Since the derived pullback preserves exact triangles, we have
\[f_i^*\Cone(\psi)\simeq \Cone(f_i^*(\psi)).\] 
It follows that the morphisms $f_i^*\alpha$ can be interpreted as the natural morphisms
\[
\begin{tikzcd}
\CH^0(f_i^*\CG)\arrow{r}&\CH^0\Cone(f_i^*(\psi)),
\end{tikzcd}
\]
which are all trivial since $f_i^*\psi$ are surjective by assumption. This proves that $\alpha=0$, confirming that surjectivity is local for the \'etale topology.
\end{proof}

\subsection{Cotangent complexes}
\label{sec:cotg-complex}
Let $\XX$ be a derived Artin stack. Then $\XX$ has a well-defined \emph{cotangent complex} $\BL_{\XX} \in L_{\qcoh}(\XX)$ of finite cohomological amplitude contained in $[-m,1]$ for some $m$, cf. \cite[Prop.~1.4.1.11]{hag2}. Moreover, by descent on the étale site, there is a global and well-defined cotangent complex $\BL_{\XX}$ on every locally geometric derived stack $\XX$, cf.~\cite[p.~431]{moduli-of-objects-dg}. Let $f\colon \XX\to \YY $ be a morphism of locally geometric derived stacks. Then there is a  natural map $f^*\BL_{\YY}\to \BL_{\XX} $, whose cone 
\[
\BL_f\defeq\Cone(f^*\BL_{\YY}\to \BL_{\XX} )
\]
we call the \emph{relative cotangent complex} attached to $f$. For a pair of morphisms $f\colon \XX \to \YY$ and $g\colon \YY \to \ZZ$ of locally geometric derived  stacks, the relative cotangent complexes fit into an exact triangle 
\[
\begin{tikzcd}
 f^\ast \BL_{g}
\arrow{r}
& \BL_{g\circ f}
\arrow{r}
&\BL_{f}   
\end{tikzcd}
\]
in $L_{\qcoh}(\XX)$, called the \emph{transitivity triangle}.

For a morphism of locally geometric derived stacks  $f\colon \XX\to \YY$,  we denote by $\BT_{f}$ the \emph{relative tangent complex}, i.e.~the $\mathscr{O}_{\XX}$-linear dual of $\BL_{f}$. The relative tangent complex satisfies the following functorial property (and a dual property can be formulated for the cotangent complex).
 
\begin{lemma}
\label{lemma:functoriality-tangents}
Let $\XX$, $\YY$ and $\ZZ$ be locally geometric derived stacks. If 
\[
\begin{tikzcd}[row sep=large,column sep=large]
\XX \times_{\ZZ} \YY \MySymb{dr} \arrow{r}{f}\arrow[swap]{d}{q} & \XX\arrow{d}{p} \\
\YY\arrow{r} & \ZZ
\end{tikzcd}
\]
is a cartesian diagram, then the canonical morphism between relative tangent complexes
\[
\begin{tikzcd}
\BT_{q}\arrow{r} & f^\ast \BT_p
\end{tikzcd}
\]
is an equivalence.
\end{lemma}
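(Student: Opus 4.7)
The plan is to prove the analogous statement for relative \emph{cotangent} complexes first, then dualise. More precisely, the key input is that relative cotangent complexes satisfy base change along arbitrary pullbacks in derived algebraic geometry (unlike in the classical setting), so once we have $\BL_q \simeq f^\ast \BL_p$ the result will follow from compatibility of duality with pullback.

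First I would invoke the base-change property for the relative cotangent complex: for the given cartesian square of derived Artin stacks, there is a canonical equivalence
\[
f^\ast \BL_p \simeq \BL_q
\]
in $L_{\qcoh}(\XX \times_{\ZZ} \YY)$. This is a structural property of the cotangent complex in derived algebraic geometry; see for instance \cite[Prop.~1.4.1.16]{hag2}. For us, $\XX$, $\YY$, $\ZZ$ are only assumed locally geometric, but the assertion passes to this setting by étale descent: restricting to derived affine charts of a smooth atlas, one recovers the geometric case, and the equivalence glues.

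Next I would dualise. Writing $\BT_q = \RRlHom_{\XX \times_\ZZ \YY}(\BL_q, \OO)$ and applying $\RRlHom(-, \OO)$ to the equivalence of the previous step, one obtains
\[
\BT_q \simeq \RRlHom_{\XX \times_\ZZ \YY}(f^\ast \BL_p, \OO).
\]
There is a canonical comparison map
\[
f^\ast \RRlHom_{\XX}(\BL_p, \OO_{\XX}) \longrightarrow \RRlHom_{\XX \times_\ZZ \YY}(f^\ast \BL_p, \OO_{\XX \times_\ZZ \YY}),
\]
which is an equivalence as soon as $\BL_p$ is dualisable. Locally on a smooth geometric atlas of $\XX$ the cotangent complex $\BL_p$ has bounded Tor-amplitude and is perfect, so dualisation commutes with $f^\ast$ on each chart; descending along the atlas yields the global identification $(f^\ast \BL_p)^\vee \simeq f^\ast (\BL_p^\vee) = f^\ast \BT_p$.

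Combining the two steps gives the chain of equivalences $\BT_q \simeq (f^\ast \BL_p)^\vee \simeq f^\ast \BT_p$, and one checks that this coincides with the natural map induced by applying $\RRlHom(-, \OO)$ to the canonical morphism $f^\ast \BL_p \to \BL_q$ arising from the transitivity triangle for $p \circ f = g \circ q$ (where $g\colon \YY \to \ZZ$). The main point of care is the passage from the geometric case to the locally geometric case, and in particular the verification that duality commutes with $f^\ast$: this is the only place where perfectness / dualisability of $\BL_p$ is used, and it is dealt with by local-to-global arguments via étale descent on a geometric atlas.
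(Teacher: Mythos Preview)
Your overall strategy---prove the cotangent base-change $f^\ast\BL_p\simeq\BL_q$ first, then dualise---is exactly the paper's. The first step is handled identically (a local computation from standard references).

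The difference, and the place where your argument has a gap, is in how the dualisation step is justified. You assert that ``locally on a smooth geometric atlas of $\XX$ the cotangent complex $\BL_p$ has bounded Tor-amplitude and is perfect''. But the lemma is stated for \emph{arbitrary} locally geometric derived stacks, and geometricity in the sense of \cite[Def.~1.3.3.1]{hag2} carries no finite-presentation hypothesis: already a $(-1)$-geometric stack is just an affine derived scheme, whose cotangent complex over $\Bbbk$ need not be perfect. Without perfectness of $\BL_p$, the canonical comparison map
\[
f^\ast\RRlHom_{\XX}(\BL_p,\OO_{\XX})\longrightarrow\RRlHom_{\XX\times_{\ZZ}\YY}(f^\ast\BL_p,\OO_{\XX\times_{\ZZ}\YY})
\]
has no reason to be an equivalence, so the identification $(f^\ast\BL_p)^\vee\simeq f^\ast\BT_p$ is not justified as written.

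The paper avoids this issue by appealing instead to \emph{infinitesimal cohesiveness}: morphisms between locally geometric derived stacks are infinitesimally cohesive by \cite[Prop.~17.3.8.4]{sag}, and for such morphisms \cite[Ex.~17.5.1.2]{sag} supplies the required compatibility of $\OO$-linear duality with pullback, without any perfectness assumption on $\BL_p$. Your argument becomes correct if you either impose a local-finite-presentation hypothesis (which does hold in every application made of this lemma later in the paper) or replace the perfectness claim with this infinitesimal-cohesiveness argument.
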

\begin{proof}
Since $\XX$, $\YY$ and $\ZZ$ are locally geometric, one can easily check via a local computation, using \cite[Prop.~7.3.3.7]{ha}, that the dual of our statement concerning cotangent complexes holds. Then the desired assertion will simply follow by $\mathscr{O}_{\XX\times_{\ZZ}\YY}$-linear duality, using \cite[Ex.~17.5.1.2]{sag}, once we know that the morphisms $p$ and $q$ in the diagram are \textit{infinitesimally cohesive} (in the sense of \cite[Def.~17.3.7.1]{sag}). However, the pullback $\XX \times_{\ZZ} \YY$ is again locally geometric \cite[Cor.~1.3.3.5]{hag2}, and using \cite[Prop.~17.3.8.4]{sag} it is easy to see that all morphisms of locally geometric derived stacks are infinitesimally cohesive.
\end{proof}

\subsection{Obstruction theories}
\label{sec:ob-theories}
Let $Y$ be a pure dimensional algebraic (i.e.~Artin) stack. Let $f\colon X \to Y$ be a morphism of algebraic stacks with unramified diagonal (such morphisms are called \emph{of Deligne--Mumford type}). In particular, $\HH^1(\BL_f)\cong 0$ and thus $\BL_f$ is an object in the derived category $\derived^{\leqslant 0}(\OO_{X_{\etale}})$. Following \cite[Def.~4.4]{BFinc}, an \emph{obstruction theory} relative to $f$ is a pair $(\BE,\phi)$, where 
\begin{itemize}
    \item [$\circ$] $\BE \in \derived^{\leqslant 0}(\OO_{X_{\etale}})$ is a complex with coherent cohomology in degrees $-1,0$ and vanishing cohomology in positive degrees, and
    \item [$\circ$] $\phi\colon \BE \to \BL_f$ is a morphism in $\derived^{\leqslant 0}(\OO_{X_{\etale}})$ such that $\HH^0(\phi)$ is an isomorphism and $\HH^{-1}(\phi)$ is surjective.
\end{itemize} 
We say that $(\BE,\phi)$ is \emph{perfect} if $\BE$ is of perfect amplitude contained in $[-1,0]$, cf.~\cite[Def.~5.1]{BFinc}.

One can define obstruction theories more generally, i.e.~for arbitrary morphisms $f\colon X \to Y$ of algebraic stacks, working with the lisse-\'etale site as defined in \cite{Olsson}. In this case, the relative cotangent complex $\BL_f$ is an element  of the derived category $\derived^{\leqslant 1}_{\coh}(\OO_{X_{\lissetale}})$, and an obstruction theory relative to $f$ is a pair $(\BE,\phi)$ where $\phi\colon \BE \to \BL_f$ is a morphism in $\derived^{\leqslant 1}_{\coh}(\OO_{X_{\lissetale}})$ such that $\HH^0(\phi)$ and $\HH^1(\phi)$ are isomorphisms and $\HH^{-1}(\phi)$ is surjective, cf.~\cite[Def.~3.1]{Poma-VCF-Artin}.

Derived algebraic geometry is an excellent provider of obstruction theories, in the sense of the following proposition.

\begin{prop}[{}]\label{prop:OT-truncated}
Let $\XX$ be a  locally geometric derived stack with truncation $X$. Let $\iota\colon X \into \XX$ be the canonical closed immersion. Then the natural morphism
\[
\begin{tikzcd}
\iota^\ast \BL_{\XX} \arrow{r} & \BL_X    
\end{tikzcd}
\]
is an obstruction theory on $X$.
\end{prop}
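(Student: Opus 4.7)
The plan is to analyse the transitivity triangle associated to the inclusion $\iota\colon X \into \XX$, namely
\[
\begin{tikzcd}
\iota^\ast \BL_{\XX} \arrow{r}{\phi} & \BL_X \arrow{r} & \BL_\iota \arrow{r} & \iota^\ast \BL_{\XX}[1],
\end{tikzcd}
\]
and reduce the statement to a cohomological vanishing result for the relative cotangent complex $\BL_\iota$. Concretely, what is needed is that $\BL_\iota$ is cohomologically concentrated in degrees $\leqslant -2$: indeed, the long exact sequence in cohomology induced by the triangle then yields, on one hand, that $\HH^0(\phi)$ and $\HH^1(\phi)$ are isomorphisms (because $\HH^{-1}(\BL_\iota)$, $\HH^0(\BL_\iota)$ and $\HH^1(\BL_\iota)$ all vanish) and, on the other hand, that $\HH^{-1}(\phi)$ is surjective (because the connecting map $\HH^{-1}(\BL_X)\to \HH^{-1}(\BL_\iota)$ is the zero map). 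These are exactly the conditions required by \cite[Def.~3.1]{Poma-VCF-Artin} for $\phi$ to be an obstruction theory on the Artin stack $X$.

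The main step is therefore the vanishing $\HH^{\geqslant -1}(\BL_\iota) = 0$. Since formation of cotangent complexes is compatible with smooth (in particular étale) pullback, and since $\XX$ is locally geometric, the claim can be checked on a smooth affine atlas $\RSpec A \to \XX$. There, $\iota$ is represented at the level of cdgas by the canonical augmentation $A \onto \HH^0(A)$, and one has to prove that $\BL_{\HH^0(A)/A}$ is cohomologically concentrated in degrees $\leqslant -2$. To this end, I would use the Postnikov tower of $A$: the tower of truncations
\[
\begin{tikzcd}
A \arrow{r} & \cdots \arrow{r} & \tau_{\geqslant -n}A \arrow{r} & \tau_{\geqslant -n+1}A \arrow{r} & \cdots \arrow{r} & \tau_{\geqslant 0}A = \HH^0(A)
\end{tikzcd}
\]
exhibits each step $\tau_{\geqslant -n}A \to \tau_{\geqslant -n+1}A$ as a (derived) square-zero extension by the module $\HH^{-n}(A)[n]$, which is concentrated in cohomological degree $-n$. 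By the standard formula for the relative cotangent complex of a square-zero extension \cite[Ch.~7]{ha}, one therefore has
\[
\BL_{\tau_{\geqslant -n+1}A / \tau_{\geqslant -n}A} \simeq \HH^{-n}(A)[n+1],
\]
which sits in cohomological degree $-n-1 \leqslant -2$. Iterated use of the transitivity triangle along the Postnikov tower then shows that $\BL_{\HH^0(A)/A}$ is built from pieces all concentrated in cohomological degrees $\leqslant -2$, whence the required vanishing.

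The hard part is ensuring the Postnikov/square-zero extension argument is carried out cleanly at the $\infty$-categorical level, and passes from the affine local picture back up to the locally geometric derived stack $\XX$ via étale descent for cotangent complexes; the latter is guaranteed by the existence of a global cotangent complex on any locally geometric derived stack (cf.~\cite[p.~431]{moduli-of-objects-dg}). Once this is in place, the rest is a mechanical application of the long exact sequence of the transitivity triangle, together with the observation that $\iota^\ast \BL_{\XX}$ inherits the cohomological amplitude bound $[-m,1]$ from $\BL_{\XX}$ via the $t$-exactness of $\iota^\ast$ on the heart, so that $\BE = \iota^\ast\BL_{\XX}$ lies in the appropriate derived category to qualify as an obstruction theory in the sense recalled in \Cref{sec:ob-theories}.
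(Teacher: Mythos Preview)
Your proposal is correct. The paper's own proof is a one-line citation to \cite[Prop.~1.2]{Schurg_Toen_Vezzosi_Determinants_of_perfect_complexes}, so there is nothing to compare at the level of strategy: you have simply unpacked the argument that the reference contains. The reduction to the connectivity estimate $\BL_\iota \in \derived^{\leqslant -2}$, and the verification of the latter via the Postnikov tower of $A$ on an affine chart, is exactly the standard route (and is, up to cosmetic differences, what Sch\"urg--To\"en--Vezzosi do).

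One small point worth making explicit in your write-up: when you say ``$\iota$ is represented at the level of cdgas by $A \twoheadrightarrow \HH^0(A)$'', you are using that for a \emph{flat} (in particular smooth) morphism $p\colon \RSpec A \to \XX$ the square
\[
\begin{tikzcd}
\Spec \HH^0(A) \arrow[r]\arrow[d] & X \arrow[d,"\iota"] \\
\RSpec A \arrow[r,"p"'] & \XX
\end{tikzcd}
\]
is cartesian, so that the relative cotangent complex $\BL_\iota$ genuinely pulls back to $\BL_{\HH^0(A)/A}$. This is true (truncation commutes with flat base change), but it is the one nontrivial ingredient in the passage from the global to the affine picture and deserves to be stated rather than absorbed into the phrase ``compatible with smooth pullback''. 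Once that is in place, your Postnikov argument and the long exact sequence computation go through without issue.
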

\begin{proof}
The proof follows directly from \cite[Prop.~1.2]{Schurg_Toen_Vezzosi_Determinants_of_perfect_complexes}.
\end{proof}

\section{The derived hyperquot scheme}
\label{sec:derived-nested-quot}

\subsection{The classical hyperquot scheme}
\label{sec:nested-quot-classical}
Throughout this section, we fix a polarised projective scheme $(X,\OO_X(1))$, a coherent sheaf $\CE\in \Coh(X)$ and a positive integer $l$. The \emph{hyperquot functor} attached to the triple $(X,\CE,l)$ is the functor
\begin{equation}
\label{classical-quot-functor}
\begin{tikzcd}
\mathsf{Quot}_X^{[l]}(\CE)\colon\mathsf{Sch}_{\Bbbk}^{\mathrm{op}} \arrow{r} & \Sets
\end{tikzcd}
\end{equation}
sending a scheme $B$ to the set of isomorphism classes of subsequent quotients
\begin{equation}\label{point-of-quot}
\begin{tikzcd}
\CE_B \arrow[two heads]{r} & \CT_1 \arrow[two heads]{r} & \cdots \arrow[two heads]{r} & \CT_l     
\end{tikzcd}
\end{equation}
in $\Coh(X \times B)$, where $\CT_i$ is $B$-flat for all $i=1,\ldots,l$ and $\CE_B$ denotes the pullback of $\CE$ along the projection $X\times B \to X$. Two such flags $\CE_B \onto \CT_\bullet$ and $\CE_B \onto \CT'_\bullet$ are isomorphic when $\ker(\CE_B \onto \CT_i) = \ker(\CE_B \onto \CT'_i)$ for all $i=1,\ldots,l$. The hyperquot functor is representable\footnote{This can be proved adapting the proof of \cite[Thm.~4.5.1]{Sernesi} or by an explicit induction on $l$ as in \cite[Sec.~2.A.1]{modulisheaves}.} by a $\Bbbk$-scheme $\Quot_X^{[l]}(\CE)$ locally of finite type, which we  call  the \textit{hyperquot scheme}. The scheme $\Quot_X^{[l]}(\CE)$ decomposes as an infinite disjoint union 
\[
\Quot_X^{[l]}(\CE)=\bigsqcup_{\boldit{p}}\Quot_X^{[l]}(\CE,\boldit{p})
\]
of open and closed projective subschemes indexed by $l$-tuples  $\boldit{p} = (p_1,\ldots,p_l)$ of polynomials $p_i \in \BQ[z]$. The $\boldit{p}$-th component $\Quot_X^{[l]}(\CE,\boldit{p})$ parametrises flags as in \eqref{point-of-quot} where $\CT_i$ has fibrewise Hilbert polynomial (with respect to the fixed polarisation $\OO_X(1)$) equal to $p_i$, for all $i=1,\ldots,l$. The smoothness of $\Quot_X^{[l]}(\CE,\boldit{p})$ was characterised in \cite{Lissite-quot} in the case where $ p_1, \dots, p_l$ are constant.

In \Cref{subsec:derived-quot-functor} we shall define a derived enhancement of the moduli functor $\mathsf{Quot}_X^{[l]}(\CE)$.

\subsection{Flags of perfect complexes} \label{subsec:derived-flags}
In this section we introduce the derived stacks of perfect complexes and flags thereof. Their geometricity properties, confirmed in \Cref{sec:geometricity-results}, will be used crucially in \Cref{subsec:derived-quot-functor}.

\subsubsection{The stack of perfect complexes}
Given $A \in \cdga$, the stable $\myInfty$-category $\mathsf{Perf}(A)$ can be defined as the full sub-$\myInfty$-category of $L_{\qcoh}(\RSpec A)$ spanned by the fully dualisable objects, see \cite[Prop.~2.7.28]{dagviii}. When $A$ is a classical commutative $\Bbbk$-algebra, this is just the sub-$\myInfty$-category spanned by objects which are isomorphic to bounded complexes of projective $A$-modules \cite[\href{https://stacks.math.columbia.edu/tag/07LT}{Tag 07LT}]{stacks-project}.
The functor
\[
\begin{tikzcd}[row sep=tiny]
\cdga\arrow{rr}{\mathsf{Perf}}
    &&
    \Cat_{\myInfty}\\
A\arrow[mapsto]{rr}&&\mathsf{Perf}(A)
\end{tikzcd}
\]
satisfies \'etale descent (see e.g.~\cite[Prop.~2.8.4.2]{sag}), so it defines a sheaf of $\myInfty$-categories for the étale topology on $\cdga$. In particular, it follows that for any derived stack $\YY$ the $\myInfty$-category $\mathsf{Perf}(\YY) \subset L_{\qcoh}(\YY)$, which is defined  as the full sub-$\myInfty$-category spanned by fully dualisable objects, satisfies the descent formula
\[
\mathsf{Perf}(\YY) \simeq \lim_{\RSpec A \to \YY}\mathsf{Perf}(A).
\]
Composing the functor $\mathsf{Perf}$ with the maximal subgroupoid functor
\[
\begin{tikzcd}
    (-)^{\simeq}\colon\Cat_{\myInfty}
    \arrow[r]
    &\Spaces,
\end{tikzcd}
\]
we obtain a derived stack that we denote by $\DPerf$. The fact that this composition satisfies descent as well follows from the fact that taking the maximal subgroupoid of a $\myInfty$-category is a right adjoint, hence preserves all limits.

For a derived stack $\XX$, we consider the ``relative'' version
\[
\DPerf(\XX)=\mathbf{Map}_{\dSt}(\XX,\DPerf),
\]
where $\mathbf{Map}_{\dSt}(-,-)$ denotes the internal hom-functor in the $\myInfty$-category of derived stacks. Informally, this stack parametrises perfect complexes over $\XX$. Moreover, when $\XX=X$ is a classical smooth and proper variety, $\DPerf(X)$ agrees with the locally geometric stack derived  stack $\mathscr{M}_{L_{\qcoh(X)}}$ of \emph{perfect} objects of $L_{\qcoh}(X)$, cf.~\cite{moduli-of-objects-dg}. 

\subsubsection{The stack of flags of perfect complexes}
Fix an integer $l \in \BZ_{\geqslant0}$. Let 
\[
[l]=\left\{0\to 1\to\cdots\to l\right\}
\]
be the standard $l$-simplex, seen as a category. The composition
\[
\begin{tikzcd}
\cdga\arrow{r}{\mathsf{Perf}} & \Cat_\myInfty \arrow{rr}{\Fun([l],-)} && \Cat_\myInfty \arrow{r}{(-)^{\simeq}} &\Spaces
\end{tikzcd}
\]
satisfies \'etale descent. Indeed, $\mathsf{Perf}$ satisfies \'etale descent, and the last two functors commute with arbitrary limits. Let us call $\mathsf{Perf}^{[l]}$ and $\DPerf^{[l]}$ the associated functor and derived stack, respectively. Again, for an arbitrary derived stack $\XX$, let
\[
\DPerf^{[l]}(\XX)=\mathbf{Map}_{\dSt}(\XX,\DPerf^{[l]})
\]
denote the relative version of $\DPerf^{[l]}$, parametrising flags
\begin{equation}\label{string-of-morphisms}
\begin{tikzcd}
\CE_\bullet={\{}
    \CE_0 \arrow{r} & 
    \CE_1 \arrow{r} &
    \cdots \arrow{r} &
    \CE_l{\}}
\end{tikzcd}
\end{equation}
of perfect complexes over $\XX$ of length $l+1$. 
For each $i=0,\ldots,l$ one has an \emph{evaluation morphism}
\[
\begin{tikzcd}[row sep=tiny]
\DPerf^{[l]}(\XX) \arrow{r}{\mathrm{ev}_i} & \DPerf(\XX) \\
\CE_\bullet \arrow[mapsto]{r} & \CE_i.
\end{tikzcd}
\]
This is an equivalence when $l=0$.

When $l>0$, we shall also make use of the natural morphisms
\begin{equation}\label{forgetful-morphism}
\begin{tikzcd}[row sep=tiny]
    \DPerf^{[l]}(\XX) \arrow{r}{\sigma_l} & \DPerf^{[l-1]}(\XX)\\
    \left\{\CE_0\to\cdots\to\CE_{l}\right\}\arrow[mapsto]{r} &  \left\{\CE_0\to\cdots\to\CE_{l-1}\right\}
\end{tikzcd}
\end{equation}
and
\begin{equation}
\label{forgetful-morphism-2}
\begin{tikzcd}[row sep=tiny]
\DPerf^{[l]}(\XX) \arrow{r}{\sigma_0} & \DPerf^{[l-1]}(\XX)\\
\left\{\CE_0\to\cdots\to\CE_{l}\right\}\arrow[mapsto]{r} & \left\{\CE_1\to\cdots\to\CE_{l}\right\}
\end{tikzcd}
\end{equation}
forgetting, respectively, the last arrow and the first arrow in a flag of length $l+1$.

\subsubsection{The stack of morphisms}
\label{sec:stack-MOR}
We will need in what follows another derived stack, defined as follows. Consider a pair of perfect complexes $\CF,\CG \in \mathsf{Perf}(\XX)$ over an arbitrary derived stack $\XX$. The pullback 
\begin{equation}
\label{def:morphismstack}
\begin{tikzcd}[row sep=large,column sep=large]
\mathbf{Mor}_{\XX}(\CF,\CG) \MySymb{dr}  \arrow{r}\arrow{d}
& \Spec \Bbbk \arrow{d}{\langle\CF,\CG\rangle}  \\
\DPerf^{[1]}(\XX) \arrow[swap]{r}{\langle s,t\rangle} 
& \DPerf(\XX)\times\DPerf(\XX)
\end{tikzcd}
\end{equation}
is a derived stack, which parametrises all morphisms $\CF \to \CG$ in $\mathsf{Perf}(\XX)$. In the diagram, we have written $s$ (as in source) for $\mathrm{ev}_0$ and $t$ (as in target) for $\mathrm{ev}_1$. Of course, replacing $\Spec \Bbbk$ with an arbitrary derived affine scheme $\RSpec A$ and letting $\CF$ and $\CG$ be arbitrary perfect complexes over $\XX_A$, one obtains a derived stack $\mathbf{Mor}_{\XX_A}(\CF,\CG)$ parametrising morphisms $\CF\to\CG$ in the category $\mathsf{Perf}(\XX_A)$.

\subsubsection{The stack of flags with fixed source}\label{sec:fixed source}
Let $\XX$ be a derived stack, $\CE \in \mathsf{Perf}(\XX)$ a perfect complex over $\XX$. One can consider the derived stack $\DPerf^{[l]}(\XX)_{\CE/}$ defined as the pullback 
\[
\begin{tikzcd}[column sep=large,row sep=large]
\DPerf^{[l]}(\XX)_{\CE/}\MySymb{dr}\arrow[hook]{d}  \arrow{r} & \Spec \Bbbk  \arrow[hook]{d}{\CE} \\
\DPerf^{[l]}(\XX) \arrow{r} & \DPerf(\XX)
\end{tikzcd}
\]
in the $\myInfty$-category $\dSt$ of derived stacks.

Informally, the stack $\DPerf^{[l]}(\XX)_{\CE/}$ parametrises flags as in \eqref{string-of-morphisms}, with the additional datum of an equivalence $\CE\simeq\CE_0$, together with equivalences of diagrams which make all the squares commute in a homotopy-coherent sense.

\subsection{Geometricity results}
\label{sec:geometricity-results}
The goal of this subsection is to prove that, given a derived stack $\XX$ such that $\DPerf(\XX)$ is locally geometric (see \Cref{thm:perf_locally_geometric} for sufficient conditions) and a perfect complex $\CE \in \mathsf{Perf}(\XX)$, the stacks
\begin{equation}\label{cacca-madonna}
\DPerf^{[l]}(\XX),\quad \DPerf^{[l]}(\XX)_{\CE/}
\end{equation}
are locally geometric for all $l\geqslant 0$ (cf.~\Cref{prop: Perf f loc geom} and \Cref{prop:Perf-fixed-E-locallygeometric} respectively).

Following \cite{integraltransforms}, we say that a derived stack $\XX$ is \emph{perfect} if $L_{\qcoh}(\XX)$ is a \emph{rigid symmetric monoidal} $\myInfty$-category, which means that the classes of compact and fully dualisable objects coincide. Moreover, we say that $L_{\qcoh}(\XX)$ is \emph{locally compact} if the $\Bbbk$-dg-module of morphisms between compact objects is compact.

\begin{theorem}[{\cite[Thm.~$4.4.1$]{Pandit2011ModuliPI}}]
\label{thm:perf_locally_geometric}
Let $\XX$ be a perfect derived stack such that $L_{\qcoh}(\XX)$ is compactly generated and locally compact. Then the derived stack $\DPerf(\XX)$ is locally geometric.
\end{theorem}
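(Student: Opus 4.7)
The plan is to adapt To\"en--Vaqui\'e's strategy (as in \cite{moduli-of-objects-dg}) for the local geometricity of moduli of objects in a dg-category. The perfectness hypothesis on $\XX$ gives an equivalence between $\mathsf{Perf}(\XX)$ and the full sub-$\myInfty$-category of compact objects of $L_{\qcoh}(\XX)$, which by compact generation recovers all of $L_{\qcoh}(\XX)$ by filtered colimits. The local compactness assumption ensures that the mapping complex $\RRlHom_{\XX}(\CF,\CG)$ between perfect objects $\CF,\CG$ is itself a compact $\Bbbk$-dg-module, i.e.~a perfect complex of vector spaces. This last property plays the role of a relative properness condition, and is the key input of To\"en--Vaqui\'e's theorem.

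The next step is to decompose $\DPerf(\XX)$ as a filtered union of open substacks
\[
\DPerf(\XX) \,\simeq\, \bigcup_{a\leqslant b} \DPerf^{[a,b]}(\XX),
\]
where $\DPerf^{[a,b]}(\XX)$ parametrises perfect complexes of Tor-amplitude contained in $[a,b]$. Openness of these substacks follows because the condition on Tor-amplitude is local for the flat topology and behaves well under base change. Local geometricity of $\DPerf(\XX)$ is thus reduced to geometricity of each $\DPerf^{[a,b]}(\XX)$.

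I would then prove geometricity of the strata by induction on the amplitude $b-a$. The base case is the groupoid of vector bundles (or finitely generated projective modules) of bounded rank, which is a disjoint union of classifying stacks $B\GL_r$, hence $1$-geometric. For the inductive step, one uses the fact that every perfect complex of Tor-amplitude $[a,b]$ can be realised, locally, as the cone of a morphism between perfect complexes of strictly smaller amplitude; in terms of stacks, this expresses $\DPerf^{[a,b]}(\XX)$ as a retract of an $\mathbf{Mor}$-stack built (via \eqref{def:morphismstack}) from strata of smaller amplitude. The local compactness hypothesis is what guarantees that these mapping stacks are themselves geometric, because their cotangent complex at a point $(\CF,\CG,\varphi)$ is controlled by $\RRlHom_{\XX}(\CF,\CG)$, which is perfect over $\Bbbk$. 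Together with Artin--Lurie representability (or the explicit smooth atlases produced in \cite{Pandit2011ModuliPI}), this propagates geometricity through the induction.

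The main obstacle is the inductive step: one must verify that the cotangent complex of $\DPerf^{[a,b]}(\XX)$ at a perfect object $\CE$ is given by $\RRlHom_{\XX}(\CE,\CE)[1]$ and, crucially, that this is a perfect complex over the ground ring. The local compactness assumption on $L_{\qcoh}(\XX)$ is designed precisely to make this computation work, but verifying the full list of axioms for $n$-geometricity (in particular, the existence of a smooth surjection from a geometric stack with an explicit dimension bound) requires a careful bookkeeping of amplitudes and cohomological shifts at each stage of the induction. Once this technical point is settled, the local geometricity of $\DPerf(\XX)$ follows by covering it with the strata $\DPerf^{[a,b]}(\XX)$.
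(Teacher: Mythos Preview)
The paper does not prove this theorem; it is quoted verbatim from \cite[Thm.~4.4.1]{Pandit2011ModuliPI} and used as a black box throughout \Cref{sec:geometricity-results}. There is therefore no proof in the paper to compare your proposal against.

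That said, your sketch contains a genuine gap at the base case. You assert that the stratum of Tor-amplitude $[a,a]$ is ``a disjoint union of classifying stacks $B\GL_r$, hence $1$-geometric''. This is only correct when $\XX=\Spec\Bbbk$. For a general $\XX$, the stack $\DPerf^{[0,0]}(\XX)$ sends $A$ to the groupoid of vector bundles on $\XX_A$, i.e.~it is the moduli stack $\Bun(\XX)$, which is not a disjoint union of $B\GL_r$'s. Proving that $\Bun(\XX)$ is geometric is itself a nontrivial instance of Artin--Lurie representability and cannot serve as a free anchor for the induction you describe.

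The overall shape of your argument --- stratify by amplitude, induct on $b-a$, use local compactness to make the mapping stacks $\mathbf{Mor}_{\XX_A}(\CF,\CG)$ geometric --- is close in spirit to the To\"en--Vaqui\'e argument in \cite{moduli-of-objects-dg}, but their stratification is set up abstractly for a dg-category $T$: the amplitude is measured on the $T^{\opp}$-module side (i.e.~over the test ring $A$), not as the Tor-amplitude of a complex on $\XX$. In that framework the base case genuinely does reduce to finitely generated projective $A$-modules and hence to $\coprod_r B\GL_r$. Pandit's proof in \cite{Pandit2011ModuliPI} takes a different route and verifies Lurie's representability criterion directly. Either approach is viable, but the induction as you have written it would require an independent proof of geometricity for $\Bun(\XX)$, which is of comparable difficulty to the full statement.
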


\begin{remark}
\label{rmk:perfect-stack}
For the reader's convenience, we recall some classes of perfect derived stacks to which \Cref{thm:perf_locally_geometric} applies.
\begin{itemize}
    \item [\mylabel{perf-1}{(1)}] Any quasiseparated and quasicompact classical scheme is a perfect derived stack \cite[Cor.~$2.3$]{Neeman2}. Any proper scheme satisfies the assumptions of \Cref{thm:perf_locally_geometric}  \cite[Prop.~2.5]{Neeman2}.
    \item [\mylabel{perf-2}{(2)}] Any quasiseparated derived scheme with affine diagonal is a perfect derived stack \cite[Prop.~$3.19$]{integraltransforms}. Any proper derived scheme satisfies the assumptions of \Cref{thm:perf_locally_geometric}.
    \item [\mylabel{perf-3}{(3)}] Products of perfect derived stacks and pullbacks of perfect derived stacks along affine morphisms are again perfect \cite[Prop.~$3.24$]{integraltransforms}. Finite products of proper perfect derived stacks satisfy the assumptions of \Cref{thm:perf_locally_geometric}. 
\end{itemize}
\end{remark}

\begin{prop}
\label{prop:Mor-locally-geometric}
Let $\XX$ be a derived stack such that $L_{\qcoh}(\XX)$ is locally compact. Fix a derived affine scheme $\RSpec A$ and two perfect complexes $\CF,\CG \in \mathsf{Perf}(\XX_A)$. Then the derived stack $\mathbf{Mor}_{\XX_A}(\CF,\CG)$ is geometric.
\end{prop}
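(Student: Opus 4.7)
The plan is to identify $\mathbf{Mor}_{\XX_A}(\CF,\CG)$, viewed as a derived stack over $\RSpec A$, with the linear derived stack associated to a perfect dg-$A$-module, and then to invoke the standard geometricity of such linear stacks. Unwinding the pullback square \eqref{def:morphismstack}, for any morphism $\RSpec B\to\RSpec A$ the $B$-points of $\mathbf{Mor}_{\XX_A}(\CF,\CG)$ are identified with the mapping space $\RHom_{\XX_B}(\CF_B,\CG_B)$, i.e.\ with the space underlying the dg-$B$-module $\CH_B\defeq\RRlHom_{p_B}(\CF_B,\CG_B)$, where $p_B\colon\XX_B\to\RSpec B$ denotes the projection.

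The key technical step is to show that $\CH\defeq\CH_A$ is a perfect dg-$A$-module, and that its formation commutes with arbitrary base change in $\cdga$, so that $\CH_B\simeq\CH\otimes_A B$. This is where local compactness of $L_{\qcoh}(\XX)$ enters crucially: the $\myInfty$-category $\mathsf{Perf}(\XX_A)$ is generated, under retracts and finite colimits, by external products $q^\ast\CF'\otimes p_A^\ast\CA$ with $\CF'\in\mathsf{Perf}(\XX)$ and $\CA\in\mathsf{Perf}(A)$, where $q\colon\XX_A\to\XX$ is the other projection. On such generators, K\"unneth and the projection formula yield
\[
p_{A\ast}\RRlHom_{\XX_A}\bigl(q^\ast\CF'\otimes p_A^\ast\CA,\;q^\ast\CF''\otimes p_A^\ast\CA'\bigr)\simeq\RHom_{\XX}(\CF',\CF'')\otimes_{\Bbbk}\RHom_A(\CA,\CA'),
\]
which lies in $\mathsf{Perf}(A)$ precisely because $\RHom_{\XX}(\CF',\CF'')$ is perfect over $\Bbbk$ by local compactness. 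A d\'evissage along fiber sequences, together with flat base change for the pushforward $p_{A\ast}$, then extends both the perfection of $\CH$ and the base change identification $\CH_B\simeq\CH\otimes_A B$ to all pairs of perfect complexes on $\XX_A$.

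By dualisability of $\CH$ in $\mathsf{Perf}(A)$, the functor of $B$-points rewrites as $\RSpec B\mapsto\RHom_A(\CH^\vee,B)$, identifying $\mathbf{Mor}_{\XX_A}(\CF,\CG)$ with the linear derived stack $\mathbb{V}_{\RSpec A}(\CH^\vee)$, i.e.\ the derived stack over $\RSpec A$ representing the functor $B\mapsto\RHom_A(\CH^\vee,B)$. It is a classical fact in derived algebraic geometry (see e.g.~\cite{hag2,moduli-of-objects-dg}) that the linear stack attached to a perfect complex over a derived affine base is a geometric derived stack, with the level of geometricity bounded in terms of the Tor-amplitude of the complex; the argument proceeds by fitting the perfect complex into fiber sequences whose extremes are (shifts of) locally free modules, for which the linear stack is manifestly a shifted affine bundle or classifying stack. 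The main obstacle is therefore concentrated in the second paragraph, namely in the perfection and base change statements for $\CH$; once these are established, the geometricity of $\mathbf{Mor}_{\XX_A}(\CF,\CG)$ is a formal consequence of the general theory of linear stacks.
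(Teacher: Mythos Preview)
Your approach is essentially the same as the paper's: both identify $\mathbf{Mor}_{\XX_A}(\CF,\CG)$ with the linear derived stack over $\RSpec A$ attached to the dg-$A$-module $\CH=\RHom_{\XX_A}(\CF,\CG)$, reduce geometricity to the perfection of $\CH$, and then invoke the standard result (the paper cites \cite[Sub-Lemma~3.9 and Cor.~3.19]{moduli-of-objects-dg}). The only difference lies in how perfection of $\CH$ is argued. The paper uses the K\"unneth equivalence $L_{\qcoh}(\XX_A)\simeq L_{\qcoh}(\XX)\otimes_{\Bbbk}\Mod_A$ together with Pandit's transfer lemma \cite[Lemma~4.2.15]{Pandit2011ModuliPI} to deduce that $L_{\qcoh}(\XX_A)$ is locally compact over $A$, whence $\CH$ is compact (hence perfect) as an $A$-module. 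You instead run an explicit d\'evissage via generation of $\mathsf{Perf}(\XX_A)$ by external products. One caveat: your generation claim does not follow from local compactness of $L_{\qcoh}(\XX)$ alone---it requires the K\"unneth equivalence (so that compact objects in $L_{\qcoh}(\XX_A)$ are thickly generated by external products of compacts) and then the observation that local compactness forces $\RHom_{\XX}(\CF',\CF'')$ to be perfect over $\Bbbk$ even when $\CF',\CF''$ are merely compact. With that adjustment your d\'evissage is exactly an unpacking of what Pandit's lemma says, so the two arguments coincide. (Incidentally, your functor-of-points description $B\mapsto\RHom_A(\CH^\vee,B)$ is the correct one; the paper's displayed formula omits the dual, though this is immaterial for the conclusion since $\CH$ perfect implies $\CH^\vee$ perfect.)
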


\begin{proof}
The proof is  analogous to the one of \cite[Cor.~$3.19$]{moduli-of-objects-dg}, but some remarks are in order. We have that $\mathbf{Mor}_{\XX_A}(\CF,\CG)$ is equivalent to the derived stack described by the assignment
\[
\begin{tikzcd}[row sep=tiny]
\mathsf{cdga}^{\leqslant0}_A\arrow{r}&\Spaces\\
B\arrow[mapsto]{r}&\RHom_A(\RHom_{\XX_A}(\CF,\CG)),B).
\end{tikzcd}
\]
Since $L_{\qcoh}(\XX)$ is locally compact, we know that $L_{\qcoh}(\XX)\otimes_{L_{\qcoh}(\Spec \Bbbk)}L_{\qcoh}(\RSpec A)$ is locally compact over $A$ thanks to \cite[Lemma $4.2.15$]{Pandit2011ModuliPI}, and so is $L_{\qcoh}(\XX_A)$ because of the equivalence
\[
L_{\qcoh}(\XX_A)\simeq L_{\qcoh}(\XX)\otimes_{L_{\qcoh}(\Spec \Bbbk)}L_{\qcoh}(\RSpec A),
\]
see for example \cite[Cor.~$9.4.2.4$]{sag}. In particular, $\RHom_{\XX_A}(\CF,\CG)$ is a perfect $A$-dg-module, and so we can safely apply \cite[Sub-Lemma $3.9$]{moduli-of-objects-dg} just as in \cite[Cor.~$3.19$]{moduli-of-objects-dg}.
\end{proof}

For the rest of the section, we fix a derived stack $\XX$ satisfying the assumptions of \cref{thm:perf_locally_geometric}, so that in particular 
\begin{itemize}
    \item [\mylabel{cacca-1}{(i)}] $\DPerf(\XX)$ is locally geometric, and
    \item  [\mylabel{cacca-2}{(ii)}] \Cref{prop:Mor-locally-geometric} applies.
\end{itemize}

We can now state our desired geometricity results for derived stacks of flags of perfect complexes over $\XX$.
\begin{prop}\label{prop: Perf f loc geom}
Let $\XX$ be a derived stack satisfying the assumptions of \cref{thm:perf_locally_geometric}. Then $\DPerf^{[l]}(\XX)$ is locally geometric for every $l\geqslant 0$. 
\end{prop}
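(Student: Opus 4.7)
The plan is to proceed by induction on $l$. The base case $l=0$ is immediate, since $\DPerf^{[0]}(\XX)\simeq\DPerf(\XX)$ is locally geometric by \Cref{thm:perf_locally_geometric}.

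For $l=1$, I would exploit the source-and-target morphism $\langle s,t\rangle\colon\DPerf^{[1]}(\XX)\to\DPerf(\XX)\times\DPerf(\XX)$. The target is locally geometric, being a product of two locally geometric derived stacks. By the cartesian square \eqref{def:morphismstack}, the fibre of $\langle s,t\rangle$ over any $\RSpec A$-point $(\CF,\CG)$ of the target is the derived stack $\mathbf{Mor}_{\XX_A}(\CF,\CG)$, which is geometric by \Cref{prop:Mor-locally-geometric}. Taking a smooth atlas of $\DPerf(\XX)\times\DPerf(\XX)$ by geometric derived substacks and pulling back along $\langle s,t\rangle$, one obtains a smooth cover of $\DPerf^{[1]}(\XX)$ by geometric derived stacks (geometricity being stable under pullback), thus confirming local geometricity for $l=1$.

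For the inductive step $l\geqslant2$, the key idea is that an $l$-flag $\CE_0\to\cdots\to\CE_l$ is equivalent to the datum of an $(l-1)$-flag $\CE_0\to\cdots\to\CE_{l-1}$ together with a morphism $\CE_{l-1}\to\CE_l$ out of its last term. Categorically, this reflects the pushout decomposition $[l]\simeq[l-1]\sqcup_{[0]}[1]$ in the $\myInfty$-category of small $\myInfty$-categories; applying the limit-preserving functor $A\mapsto\Fun(-,\mathsf{Perf}(\XX_A))^{\simeq}$ and varying $A\in\cdga$ yields a homotopy cartesian square
\[
\begin{tikzcd}[row sep=large,column sep=large]
\DPerf^{[l]}(\XX)\MySymb{dr}\arrow{r}\arrow[swap]{d}{\sigma_l}&\DPerf^{[1]}(\XX)\arrow{d}{s}\\
\DPerf^{[l-1]}(\XX)\arrow[swap]{r}{\mathrm{ev}_{l-1}}&\DPerf(\XX),
\end{tikzcd}
\]
with $\sigma_l$ the forgetful morphism of \eqref{forgetful-morphism}. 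By the inductive hypothesis, the case $l=1$ already established, and stability of local geometricity under homotopy pullbacks of locally geometric derived stacks \cite[Cor.~1.3.3.5]{hag2}, the top-left corner is locally geometric, completing the induction.

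The main technical obstacle I expect is the rigorous verification of the above pullback decomposition at the $\myInfty$-categorical level: once one has identified $\DPerf^{[k]}(\XX)$ with the prestack $A\mapsto\Fun([k],\mathsf{Perf}(\XX_A))^{\simeq}$ and exploited the pushout $[l]\simeq[l-1]\sqcup_{[0]}[1]$, the rest follows formally from the fact that both the core functor $(-)^{\simeq}$ and $\Fun(-,\mathcal{C})$ transform the relevant $\myInfty$-categorical colimits into limits of spaces.
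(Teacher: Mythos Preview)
Your argument is correct and follows essentially the same route as the paper's: induction on $l$, with the geometricity of the fibre stacks $\mathbf{Mor}_{\XX_A}(\CF,\CG)$ (\Cref{prop:Mor-locally-geometric}) as the key input. The paper carries out the inductive step via the map $\langle\sigma_l,\mathrm{ev}_l\rangle\colon\DPerf^{[l]}(\XX)\to\DPerf^{[l-1]}(\XX)\times\DPerf(\XX)$, stratified by tor-amplitude, with fibres over affines explicitly identified as $\mathbf{Mor}$ stacks; your pullback square $\DPerf^{[l]}(\XX)\simeq\DPerf^{[l-1]}(\XX)\times_{\DPerf(\XX)}\DPerf^{[1]}(\XX)$ (coming from the spine-type pushout $[l]\simeq[l-1]\sqcup_{[0]}[1]$) packages the same content more cleanly once the case $l=1$ is in hand.

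One small point of precision in your $l=1$ step: the justification ``geometricity being stable under pullback'' is not quite what is needed, since $\DPerf^{[1]}(\XX)$ is not yet known to be (locally) geometric and one cannot apply \cite[Cor.~1.3.3.5]{hag2} directly. What your fibre computation actually shows is that $\langle s,t\rangle$ is $m$-representable, whence the preimage of each geometric open in the target is geometric by \cite[Prop.~1.3.3.3(3)]{hag2} --- precisely the mechanism the paper invokes.
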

\begin{proof}
We argue by induction on $l$. For $l=0$, this is just the derived stack $\DPerf(\XX)$, whose local geometricity is precisely \cref{thm:perf_locally_geometric}. So assume that $\DPerf^{[l-1]}(\XX)$ is locally geometric. Consider the natural morphism of derived stacks
\begin{equation}\label{map-theta}
\begin{tikzcd}[row sep=tiny,column sep=large]
\DPerf^{[l]}(\XX)
\arrow{r}{\vartheta = \langle\sigma_l,\mathrm{ev}_l\rangle}
  &
  \DPerf^{[l-1]}(\XX)\times\DPerf(\XX)\\
   \left\{\CE_0\to\cdots\to\CE_{l}\right\} \arrow[mapsto]{r} & \left( \left\{\CE_0\to\cdots\to\CE_{l-1}\right\}, \,\CE_l\right)
\end{tikzcd}
\end{equation}
where $\sigma_l$ was defined in \eqref{forgetful-morphism}. The target is  locally geometric, since local geometricity is stable under homotopy pullbacks and therefore under finite products \cite[Cor.~$1.3.3.5$]{hag2}. Notice that we have a decomposition
 \begin{align*}
\DPerf(\XX)\simeq\bigcup_{a\leqslant b}\DPerf(\XX)^{[a,b]},     
\end{align*}
where $\DPerf(\XX)^{[a,b]}$ is the substack of $\DPerf(X)$ which parametrises perfect complexes over $\XX$ of fixed tor-amplitude $[a,b]$. This induces a decomposition 
\begin{align}\label{eqn: strata_l}
    \DPerf^{[l]}(\XX)\simeq\bigcup_{\substack{a_i\leqslant b_i\\
i=0,\ldots,l}}\DPerf^{[l]}(\XX)^{[a_i,b_i]},
\end{align}
where $\DPerf^{[l]}(\XX)^{[a_i,b_i]}$ denotes the pullback of derived stacks
\[
\begin{tikzcd}[row sep=large]
\DPerf^{[l]}(\XX)^{[a_i,b_i]} \MySymb{dr}  \arrow{r}\arrow[hook]{d} 
& \prod_{i=0}^l\DPerf(\XX)^{[a_i,b_i]} \arrow[hook]{d} \\
\DPerf^{[l]}(\XX) \arrow[swap]{r}{\langle \mathrm{ev}_i\rangle} 
& \prod_{i=0}^l\DPerf(\XX). 
\end{tikzcd}
\]
Explicitly, $\DPerf^{[l]}(\XX)^{[a_i,b_i]}$ is the substack of $\DPerf^{[l]}(\XX)$ which parametrises strings of morphisms of perfect complexes over $\XX$ of length $l+1$, as in \eqref{string-of-morphisms}, such that each $\CE_i$ has tor-amplitude $[a_i,b_i]$.
Consider the pullback diagram
\[
\begin{tikzcd}[row sep=large]
\DPerf^{[l]}(\XX)^{[a_i,b_i]}\MySymb{dr}  \arrow{r}\arrow[hook]{d} 
&\DPerf^{[l-1]}(\XX)^{[a_i,b_i]}\times\DPerf(\XX)^{[a_l,b_l]}\arrow[hook]{d} \\
\DPerf^{[l]}(\XX)\arrow[swap]{r}{\vartheta} & 
\DPerf^{[l-1]}(\XX)\times\DPerf(\XX)
\end{tikzcd}
\]
where $\vartheta$ was defined in \eqref{map-theta}.
We now show that the upper arrow  is $m$-representable for some $m$, i.e.~that given  $A\in\cdga$ and a map 
\begin{equation}\label{map-f}
\begin{tikzcd}
f\colon \RSpec A\arrow{r} & \DPerf^{[l-1]}(\XX)^{[a_i,b_i]}\times\DPerf(\XX)^{[a_l,b_l]},
\end{tikzcd}
\end{equation}
the pullback
\begin{equation}\label{crazy-pullback}
   f^\ast \DPerf^{[l]}(\XX)^{[a_i,b_i]} \defeq \DPerf^{[l]}(\XX)^{[a_i,b_i]}\times_{\DPerf^{[l-1]}(\XX)^{[a_i,b_i]}\times\DPerf(\XX)^{[a_l,b_l]}}\RSpec A 
\end{equation}
is a geometric derived stack. By the inductive hypothesis, this will imply the geometricity of $\DPerf^{[l]}(\XX)^{[a_i,b_i]}$, as explained in the proof of \cite[Prop.~$1.3.3.3(3)$]{hag2}, and hence our claim. 

So consider an $A$-valued point as in \eqref{map-f}, corresponding to a pair 
\begin{equation}\label{pair}
\left(\CE_{\bullet}\defeq\left\{\CE_0\to\cdots\to\CE_{l-1}\right\},\CE_l\right),
\end{equation}
where each $\CE_i$ is a perfect complex over $\XX_A$ of tor-amplitude $[a_i,b_i]$. Then the derived stack \eqref{crazy-pullback} parametrises all possible extensions of the pair \eqref{pair} to a flag
 \begin{equation}\label{flag}
\begin{tikzcd}
\CE_0 \arrow{r} &
    \CE_1 \arrow{r} & 
    \cdots \arrow{r} &
     \CE_{l-1} \arrow{r} &
    \CE_{l}.
\end{tikzcd}
\end{equation}
It is straightforward to see that the problem of extending a pair $(\CE_{\bullet},\CE_l)$ to a flag as in \eqref{flag} is equivalent to the problem of providing a morphism $\CE_{l-1}\to\CE_l$. In other words, specialising \Cref{prop:Mor-locally-geometric} to $\XX_A$, $\CF\defeq\CE_{l-1}$ and $\CG\defeq \CE_l$, we have that the canonical morphism
\[
\begin{tikzcd}
\mathbf{Mor}_{\XX_A}(\CE_{l-1},\CE_l) \arrow{r}&f^\ast \DPerf^{[l]}(\XX)^{[a_i,b_i]},
\end{tikzcd}
\]
obtained by gluing a morphism $\CE_{l-1}\to\CE_l$ to the flag $\CE_{\bullet}$, is an equivalence of derived stacks. Thus we obtain the geometricity of $f^\ast \DPerf^{[l]}(\XX)^{[a_i,b_i]}$ by an application of \Cref{prop:Mor-locally-geometric}.
\end{proof}

\begin{prop}\label{prop:Perf-fixed-E-locallygeometric}
Let $\XX$ be a derived stack satisfying the assumptions of \cref{thm:perf_locally_geometric}. Then $\DPerf^{[l]}(\XX)_{\CE/}$ is locally geometric for every $\CE \in \mathsf{Perf}(\XX)$ and every $l\geqslant 0$. 
\end{prop}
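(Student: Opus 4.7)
The plan is to read off the local geometricity of $\DPerf^{[l]}(\XX)_{\CE/}$ directly from the pullback square defining it in \Cref{sec:fixed source}. By \Cref{prop: Perf f loc geom}, the stack $\DPerf^{[l]}(\XX)$ is locally geometric; by \Cref{thm:perf_locally_geometric}, so is $\DPerf(\XX)$; and $\Spec \Bbbk$ is affine, hence $0$-geometric. The statement therefore reduces to extending the stability of geometricity under homotopy pullbacks, established in the $n$-geometric setting by \cite[Cor.~$1.3.3.5$]{hag2}, to the locally geometric one.

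To carry this out, I would choose a cover $\{\UU_{\alpha}\}_{\alpha}$ of $\DPerf^{[l]}(\XX)$ by geometric open substacks, together with a single geometric open substack $\VV \hookrightarrow \DPerf(\XX)$ through which the $\Bbbk$-point $\CE\colon\Spec \Bbbk \to \DPerf(\XX)$ factors (such $\VV$ exists since any $\Bbbk$-point is contained in at least one element of a given open cover). Restricting each $\UU_{\alpha}$ along $\mathrm{ev}_0$ to the open subfunctor $\UU_{\alpha}'\subset \UU_{\alpha}$ of those $T$-points whose image lies in $\VV$, I would then consider the diagram
\[
\begin{tikzcd}[row sep=large,column sep=large]
\UU_{\alpha}'\times_{\VV}\Spec \Bbbk \MySymb{dr} \arrow{r}\arrow{d} & \Spec \Bbbk \arrow[hook]{d}{\CE} \\
\UU_{\alpha}' \arrow{r} & \VV
\end{tikzcd}
\]
which is a homotopy pullback of $n$-geometric stacks for some common $n$, hence $n$-geometric by \cite[Cor.~$1.3.3.5$]{hag2}. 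Since the image of $\Spec\Bbbk$ in $\DPerf(\XX)$ already factors through $\VV$, the upper-left corner is canonically equivalent to $\UU_{\alpha}\times_{\DPerf(\XX)}\Spec\Bbbk$, which is an open substack of $\DPerf^{[l]}(\XX)_{\CE/}$. Letting $\alpha$ vary, these assemble into a geometric open cover, establishing local geometricity.

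The main (and only) obstacle is the bookkeeping of compatible covers, namely reducing to a single geometric open substack of $\DPerf(\XX)$ containing the basepoint $\CE$. Beyond this standard manoeuvre within the formalism of \cite{hag2}, no further geometric input is needed past \Cref{prop: Perf f loc geom}.
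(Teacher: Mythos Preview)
Your argument is correct and takes a genuinely different route from the paper. The paper does not use the defining pullback square of \Cref{sec:fixed source} at all; instead it mimics the proof of \Cref{prop: Perf f loc geom}, decomposing $\DPerf^{[l]}(\XX)_{\CE/}$ by tor-amplitude strata, considering the forgetful map $\sigma_{0,\CE}\colon\DPerf^{[l]}(\XX)_{\CE/}\to\DPerf^{[l-1]}(\XX)$, and showing this map is representable by identifying each fibre over an $A$-point $(\CE_1\to\cdots\to\CE_l)$ with $\mathbf{Mor}_{\XX_A}(\CE_A,\CE_1)$, which is geometric by \Cref{prop:Mor-locally-geometric}. Your approach is shorter and more structural: it needs only \Cref{prop: Perf f loc geom}, the existence of a single geometric open $\VV\subset\DPerf(\XX)$ containing the basepoint, and the stability of $n$-geometric stacks under pullback from \cite[Cor.~$1.3.3.5$]{hag2}, bypassing \Cref{prop:Mor-locally-geometric} entirely. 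The paper's approach, on the other hand, makes the fibre description explicit and keeps the two propositions methodologically parallel; it also avoids the (admittedly mild) bookkeeping of aligning geometric covers that you flag. Note that the paper elsewhere (proof of \Cref{lemma:functoriality-tangents}) already invokes stability of local geometricity under pullbacks citing \cite[Cor.~$1.3.3.5$]{hag2}, so your reduction is entirely consistent with the paper's own conventions.
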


\begin{proof}
The case $l=0$ is trivial, so we assume $l>0$. The decomposition \eqref{eqn: strata_l} induces the  decomposition
\[
\DPerf^{[l]}(\XX)_{\CE/}\simeq\bigcup_{\substack{a_i\leqslant b_i\\
i=1,\ldots, l}}\DPerf^{[l]}(\XX)^{[a_i,b_i]}_{\CE/}
\]
where $\DPerf^{[l]}(\XX)^{[a_i,b_i]}_{\CE/}$ denotes the pullback of derived stacks
\[
\begin{tikzcd}[row sep=large,column sep=large]
\DPerf^{[l]}(\XX)^{[a_i,b_i]}_{\CE/}\arrow{r}\MySymb{dr}\arrow[hook]{d} &   \DPerf^{[l-1]}(\XX)^{[a_i,b_i]}\arrow[hook]{d}\\
\DPerf^{[l]}(\XX)_{\CE/}\arrow[swap]{r}{\sigma_{0,\CE}} & \DPerf^{[l-1]}(\XX)
\end{tikzcd}
\]
in which the bottom arrow $\sigma_{0,\CE}$ is the restriction of the map $\sigma_0$ (discarding the source $\CE$), defined in \eqref{forgetful-morphism-2}, to the closed substack $\DPerf^{[l]}(\XX)_{\CE/}\subset \DPerf^{[l]}(\XX)$. As in the proof of \Cref{prop: Perf f loc geom}, since $\DPerf^{[l-1]}(\XX)$ is locally geometric by \Cref{prop: Perf f loc geom}, to conclude it is sufficient to prove that the upper arrow is $m$-representable for some $m$, i.e.~that given a map $f\colon \RSpec A\to\DPerf^{[l-1]}(\XX)^{[a_i,b_i]}$, the pullback
\[
f^\ast \DPerf^{[l]}(\XX)^{[a_i,b_i]}_{\CE/} \defeq \DPerf^{[l]}(\XX)^{[a_i,b_i]}_{\CE/}\times_{\DPerf^{[l-1]}(\XX)^{[a_i,b_i]}}\RSpec A
\]
is $m$-geometric for some $m$. Such derived stack parametrises all possible ways of extending a sequence
\[
\begin{tikzcd}
    \CE_1 \arrow{r} & 
    \CE_2 \arrow{r} &
    \cdots \arrow{r} &
    \CE_{l}
\end{tikzcd}
\]
of perfect complexes over $X_A$, where each $\CE_i$ has tor amplitude $[a_i,b_i]$, to a sequence
\[
\begin{tikzcd}
\CE_A \arrow{r} &
    \CE_1 \arrow{r} & 
    \CE_2 \arrow{r} &
    \cdots \arrow{r} &
    \CE_{l},
\end{tikzcd}
\]
where $\CE_A$ denotes, as ever, the pullback of $\CE$ along $\XX_A \to X$. 
In other words, there is an equivalence of derived stacks
\[
\begin{tikzcd}
\mathbf{Mor}_{\XX_A}(\CE_A,\CE_1)\arrow{r}{\sim} & f^\ast \DPerf^{[l]}(\XX)^{[a_i,b_i]}_{\CE_A/},
\end{tikzcd}
\]
so we may conclude just as we did in \Cref{prop: Perf f loc geom}.
\end{proof}

\subsection{Derived hyperquot scheme}
\label{subsec:derived-quot-functor}
Let $X$ be a projective scheme, $\CE \in \Coh(X)$ a \emph{perfect} coherent sheaf (cf.~\Cref{sec:Amplifications}) and $l$ a positive integer. In this section we define a derived enhancement of the hyperquot functor \eqref{classical-quot-functor}. We first introduce the derived moduli functor $\RQuot_X^{[l]}(\CE)$, and we immediately confirm, in \Cref{lemma:RQuot-is-derived-stack}, that it is a derived stack and later in \Cref{RQuot-is-derived-scheme} that it is, in fact, a derived scheme.

\subsubsection{Definition and stackyness of \texorpdfstring{$\RQuot_X^{[l]}(\CE)$}{}}
Fix $A\in\cdga$. Consider the full subspace of
$\DPerf^{[l]}(X)_{\CE/}(A)$ spanned by those objects $\CE_A\to\CE_{1}\to\cdots\to \CE_{l}$ such that each $\CE_{i}$ is flat over $\RSpec A$ and every morphism is surjective. Since both conditions are stable under derived base change, this defines a functor
\begin{equation}\label{derived-quot-functor}
\begin{tikzcd}
\RQuot_X^{[l]}(\CE) \colon \cdga \arrow{r} & \Spaces
\end{tikzcd}
\end{equation}
which is a subfunctor of $\DPerf^{[l]}(X)_{\CE/}$. We call it the \emph{derived hyperquot functor} associated to $(X,\CE,l)$.

\begin{lemma}\label{lemma:RQuot-is-derived-stack}
The derived hyperquot functor $\RQuot_X^{[l]}(\CE)$ is a derived stack.
\end{lemma}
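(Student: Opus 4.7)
The plan is to exploit the fact that $\RQuot_X^{[l]}(\CE)$ sits as a subfunctor of an object already known to be a derived stack. By \Cref{prop:Perf-fixed-E-locallygeometric}, the ambient stack $\DPerf^{[l]}(X)_{\CE/}$ is locally geometric, and hence in particular satisfies \'etale descent. Thus, to prove that $\RQuot_X^{[l]}(\CE)$ is a derived stack, it suffices to verify that the two conditions cutting it out inside $\DPerf^{[l]}(X)_{\CE/}$ -- namely, flatness of each term $\CE_i$ over the base and surjectivity of each arrow $\CE_{i-1}\to \CE_i$ -- are preserved and reflected under \'etale coverings on $\cdga$.

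This verification is essentially already in our toolbox. Given $A\in\cdga$ with an \'etale cover $\{A\to B_\alpha\}$, consider an $A$-point of $\DPerf^{[l]}(X)_{\CE/}$, i.e.~a flag
\[
\begin{tikzcd}
\CE_A \arrow{r} & \CE_1 \arrow{r} & \cdots \arrow{r} & \CE_l
\end{tikzcd}
\]
of perfect complexes on $X_A$, whose pullback along each $A\to B_\alpha$ lies in $\RQuot_X^{[l]}(\CE)(B_\alpha)$. We need to upgrade these local certificates into the statement that each $\CE_i$ is already flat over $\RSpec A$ and that each transition map is already surjective. This is exactly the content of \Cref{lemma:flat-is-open}\ref{etale-local}: both flatness (in the sense of \Cref{sec:flatness-surj}) and surjectivity (meaning surjectivity on $\CH^0$) are local properties for the \'etale topology on derived stacks. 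Applying this termwise to each $\CE_i$ and to each morphism in the flag yields the required descent.

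I do not anticipate any substantive obstacle in executing this argument: the nontrivial inputs are already isolated as \Cref{prop:Perf-fixed-E-locallygeometric} and \Cref{lemma:flat-is-open}. The only care needed is to match the conventions, in particular that surjectivity of a morphism of quasicoherent sheaves on a derived stack is stipulated at the level of $\CH^0$, which is precisely the version handled in \Cref{lemma:flat-is-open}\ref{etale-local}. With this observation, the proof reduces to a formal consequence of these facts together with the descent property of $\DPerf^{[l]}(X)_{\CE/}$.
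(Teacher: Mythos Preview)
Your proposal is correct and follows essentially the same route as the paper: both reduce the claim to the fact that $\DPerf^{[l]}(X)_{\CE/}$ is already a derived stack together with the \'etale locality of flatness and surjectivity from \Cref{lemma:flat-is-open}\ref{etale-local}. The only minor remark is that invoking \Cref{prop:Perf-fixed-E-locallygeometric} is overkill: $\DPerf^{[l]}(X)_{\CE/}$ is a derived stack simply because it is constructed in \Cref{sec:fixed source} as a pullback of derived stacks, so local geometricity is not needed here.
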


\begin{proof}
By definition, the derived moduli functor $\RQuot_X^{[l]}(\CE)$ maps any  $A\in\cdga$ to a union of connected components of $\DPerf(X)^{[l]}_{\CE/}(A)$. Since $A\mapsto \DPerf(X)^{[l]}_{\CE/}(A)$ is itself a derived stack, our claim follows from the fact that the conditions of surjectivity and flatness for quasicoherent sheaves on derived stacks are local for the \'etale topology, which is precisely the content of \Cref{lemma:flat-is-open}\ref{etale-local}. 
\end{proof}

\subsubsection{Geometricity of  \texorpdfstring{$\RQuot_X^{[l]}(\CE)$}{}}

The goal of this subsection is to prove that $\RQuot_X^{[l]}(\CE)$ is locally geometric (cf.~\Cref{cor:Rquot-is-locally-geom-stack} below).

We recall a foundational result which relates open derived substacks of a derived stack $\XX$ and open substacks of its classical truncation.
\begin{prop}[{\cite[Prop.~2.1]{Schurg_Toen_Vezzosi_Determinants_of_perfect_complexes}}]
\label{prop:opensubstacks}    
Let $\XX$ be a derived stack and let $X = t_0\XX$ be classical truncation. There is a bijective correspondences of equivalence classes
\[
\begin{tikzcd}
\begin{Bmatrix}
    \text{Zariski open substacks of }X
\end{Bmatrix}\arrow{r}{\phi_{\XX}}& \begin{Bmatrix}
    \text{Zariski open derived substacks of }\XX
\end{Bmatrix}. 
\end{tikzcd}
\]
Moreover, there is a pullback diagram of derived stacks
\[
\begin{tikzcd}[column sep=large,row sep=large]
U \MySymb{dr} \arrow[hook]{r}\arrow[hook]{d} & X \arrow[hook]{d} \\
\phi_{\XX}(U)\arrow[hook]{r} & \XX
\end{tikzcd}
\]
for any Zariski open substack $U\into X$.

\end{prop}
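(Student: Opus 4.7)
The plan is to establish the correspondence first in the derived affine case, where it follows essentially from the definition of open immersion of derived affine schemes, and then to globalize by étale descent.

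First I would treat the affine case $\XX = \RSpec A$, whose truncation is $\Spec \HH^0(A)$. By \Cref{def:morphismsderived}\ref{def-2}, a morphism $\RSpec B \to \RSpec A$ is an open immersion exactly when it is flat and its truncation is an open immersion of classical schemes. In one direction, given $\boldit{V} \hookrightarrow \RSpec A$ an open derived substack, covered by affine open immersions $\RSpec B_j \hookrightarrow \RSpec A$, the family of truncations $\Spec \HH^0(B_j) \hookrightarrow \Spec \HH^0(A)$ is an open cover of a Zariski open subscheme $t_0\boldit{V} \subset \Spec \HH^0(A)$. In the other direction, given $U \subset \Spec \HH^0(A)$ open, cover it by basic opens $D(\bar f_i)$, lift each $\bar f_i$ to some $f_i \in A^0$, and consider the localizations $\RSpec A[f_i^{-1}] \hookrightarrow \RSpec A$. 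These are flat (flatness of a localization is preserved by \Cref{remark:flatness}) and have open truncation $D(\bar f_i)$, hence are open immersions; on overlaps they agree up to canonical equivalence, so they glue to an open derived substack $\phi_{\RSpec A}(U) \hookrightarrow \RSpec A$. The two assignments are mutually inverse because in both cases an open immersion of derived affines is uniquely determined by its image on truncations together with the flatness condition.

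Next I would globalize using an atlas $\Set{u_i \colon \RSpec A_i \to \XX}$ for $\XX$, which induces an atlas $\Set{t_0u_i \colon \Spec \HH^0(A_i) \to X}$ for $X$. Given a Zariski open substack $U \hookrightarrow X$, pull back to obtain opens $U_i \defeq (t_0u_i)^{-1}(U) \subset \Spec \HH^0(A_i)$, and apply the affine case to produce open derived substacks $\phi_i(U_i) \hookrightarrow \RSpec A_i$. Because pullback of flat open immersions along arbitrary morphisms of derived affines is again a flat open immersion (and its truncation is the classical pullback), these pieces are compatible with the descent data on the atlas, so they glue to a unique open derived substack $\phi_{\XX}(U) \hookrightarrow \XX$. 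The inverse map is simply $t_0$: for an open derived substack $\VV \hookrightarrow \XX$, the pullback $\VV \times_\XX \RSpec A_i$ is an open derived substack of $\RSpec A_i$ by stability of open immersions under base change, and its truncations glue to a Zariski open of $X$. The fact that these assignments are mutually inverse is reduced to the affine statement by descent.

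Finally, for the pullback square, both vertical arrows $U \hookrightarrow \phi_{\XX}(U)$ and $X \hookrightarrow \XX$ are the truncation inclusions, which commute with the open immersions by construction. To check it is cartesian, one reduces to the affine case and then to basic opens: the pullback of $\RSpec A[f^{-1}] \hookrightarrow \RSpec A$ along the closed immersion $\Spec \HH^0(A) \hookrightarrow \RSpec A$ recovers $\Spec \HH^0(A)[\bar f^{-1}] = D(\bar f)$, as desired.

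The main obstacle is checking that the locally defined opens $\RSpec A[f_i^{-1}]$ glue coherently in the $\myInfty$-categorical sense, and that the correspondence is compatible with pullbacks along maps in the atlas (to get descent); however, both points follow formally from the fact that an open immersion of derived affine schemes is essentially determined by its truncation plus flatness, so there is no higher coherence data to track beyond the classical gluing of open subschemes.
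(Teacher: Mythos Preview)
The paper does not prove this proposition: it is quoted verbatim from \cite[Prop.~2.1]{Schurg_Toen_Vezzosi_Determinants_of_perfect_complexes} and used as a black box. The only thing the paper adds is the explicit formula, stated immediately after the proposition, for the map $\phi_{\XX}$: it sends an open $U \hookrightarrow X$ to the derived stack
\[
A \longmapsto \XX(A)\times_{X(\HH^0(A))} U(\HH^0(A)).
\]
So there is no in-paper proof to compare your argument against.

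Your sketch is a reasonable outline of how one proves such a statement, but there is a mismatch with the level of generality required. The proposition is stated for an \emph{arbitrary} derived stack $\XX$, i.e.\ an \'etale sheaf on $\cdga$, with no geometricity hypothesis. Your globalisation step invokes ``an atlas $\{u_i\colon \RSpec A_i \to \XX\}$'', which only exists when $\XX$ is (locally) geometric; for a general derived stack there is no such presentation to descend along. The functor-of-points description above sidesteps this entirely: one \emph{defines} $\phi_{\XX}(U)$ by the displayed formula, checks directly that it is an \'etale sheaf and that the inclusion into $\XX$ is a Zariski open immersion (which is itself a pointwise condition on maps from derived affines), and the pullback square is then immediate from the formula. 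Your affine analysis is essentially what is needed to verify that this functorially defined $\phi_{\XX}(U)$ really is open, but the gluing-via-atlas step should be replaced by the direct functorial definition.
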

Explicitly, the image of an open substack $U\into X$ under the bijection $\phi_{\XX}$ of \cref{prop:opensubstacks} is the derived stack $\UU$ given by
\[
\begin{tikzcd}[row sep=tiny]
\cdga\arrow{r}{\UU}&\Spaces\\
A\arrow[mapsto]{r}&\XX(A)\times_{X(\HH^0(A))}U(\HH^0(A)).
\end{tikzcd}
\]
\begin{prop}
\label{prop:openimmersion}
There is an open immersion 
\[
\begin{tikzcd}
j_{\CE}\colon \RQuot_X^{[l]}(\CE) \arrow[hook]{r} & \DPerf^{[l]}(X)_{\CE/}.
\end{tikzcd}
\]
\end{prop}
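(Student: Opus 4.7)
The plan is to apply \Cref{prop:opensubstacks} to reduce the question to an open-immersion statement at the level of classical truncations, where it then becomes an application of \Cref{lemma:flat-is-open}\ref{openness}.

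First, since being an open immersion of derived stacks can be checked locally on the target, I would fix $A\in\cdga$ together with a morphism $f\colon \RSpec A \to \DPerf^{[l]}(X)_{\CE/}$ classifying a flag $\CE_A \to \CE_1 \to \cdots \to \CE_l$ of perfect complexes on $X_A$, and reduce to showing that the pullback $f^\ast\RQuot_X^{[l]}(\CE)$ is a Zariski open derived subscheme of $\RSpec A$. By \Cref{prop:opensubstacks}, this is equivalent to exhibiting an open subscheme $V \subset \Spec \HH^0(A)$ such that $\phi_{\RSpec A}(V) \simeq f^\ast\RQuot_X^{[l]}(\CE)$.

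Second, I would take $V \subset \Spec \HH^0(A)$ to be the locus cut out by three classical conditions on the flag pulled back along $X_{\HH^0(A)}\to X_A$: (i) each $\CE_i|_{X_{\HH^0(A)}}$ is discrete; (ii) each coherent sheaf $\CH^0(\CE_i|_{X_{\HH^0(A)}})$ is flat over $\Spec \HH^0(A)$; and (iii) each induced map on $\CH^0$ is surjective. Condition (i) is open because, for $j\neq 0$, the coherent cohomology sheaf $\CH^j(\CE_i|_{X_{\HH^0(A)}})$ has closed support whose image under the proper projection $X\times \Spec \HH^0(A) \to \Spec \HH^0(A)$ is then closed; the complement is exactly the locus on which the sheaf vanishes on the full preimage. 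Conditions (ii) and (iii) are open by \Cref{lemma:flat-is-open}\ref{flat} and \ref{surjective} respectively, so $V$ is open in $\Spec\HH^0(A)$.

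The main obstacle will be confirming the functorial identification $\phi_{\RSpec A}(V) \simeq f^\ast\RQuot_X^{[l]}(\CE)$. Concretely, for any morphism $g\colon \RSpec B \to \RSpec A$ one needs to check that the pulled-back flag on $X_B$ satisfies the derived flatness and surjectivity conditions of \Cref{sec:flatness-surj} if and only if $\Spec \HH^0(B)\to \Spec \HH^0(A)$ factors through $V$. Surjectivity transfers at once from its very definition. For flatness, the forward direction is obtained by testing the derived flatness condition on $\CL=\HH^0(B)$, which forces both discreteness and classical flatness of the truncation via \Cref{remark:flatness}. The converse exploits the fact that every discrete $\OO_{\RSpec B}$-module is automatically a module over $\HH^0(B)$: once discreteness is known, derived flatness of each flag entry reduces to classical flatness of its pullback to $X_{\HH^0(B)}$, which is guaranteed by conditions (i) and (ii) of $V$.
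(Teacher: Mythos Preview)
Your proposal is correct and follows essentially the same route as the paper: both reduce via \Cref{prop:opensubstacks} to a classical openness question, settle it with \Cref{lemma:flat-is-open}\ref{openness}, and then verify the required identification by unwinding derived flatness as discreteness plus classical flatness of the truncation. The only cosmetic difference is that you localise to an affine chart before invoking \Cref{prop:opensubstacks} whereas the paper applies it globally to $\DPerf^{[l]}(X)_{\CE/}$; your explicit isolation of the discreteness condition (i) and its openness via proper-support arguments is in fact a shade more careful than the paper's direct appeal to \Cref{lemma:flat-is-open}\ref{flat}, which as stated concerns only coherent sheaves rather than perfect complexes.
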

\begin{proof}
By \cref{prop:opensubstacks}, we need to prove that $t_0\RQuot^{[l]}_X(\CE)$ is an open substack of $t_0\DPerf^{[l]}(X)_{\CE/}$, and that  $\phi_{\DPerf^{[l]}(X)_{\CE/}} (t_0\RQuot^{[l]}_X(\CE))\cong \RQuot^{[l]}_X(\CE)$.

The first statement can be checked on an open affine cover of $t_0\DPerf^{[l]}(X)_{\CE/}$. Let therefore  $\Spec A$ be a classical affine scheme and consider an $A$-valued point $\Spec A \to t_0\DPerf^{[l]}(X)_{\CE/}$ corresponding to a sequence 
\[
\begin{tikzcd}
    \CE \arrow{r} & 
    \CE_1 \arrow{r} &
    \cdots \arrow{r} &
    \CE_{l},
\end{tikzcd}
\]
such that each map is surjective after taking the $\CH^0$-sheaves and such that each $\CE_i$ is flat. Then, the pullback
\[
\begin{tikzcd}
\Spec A \times_{t_0\DPerf^{[l]}(X)_{\CE/}}t_0\RQuot^{[l]}_X(\CE)\arrow{r} & \Spec A
\end{tikzcd}
\]
corresponds to an open immersion of classical schemes, since the flat and surjective locus is open by \Cref{lemma:flat-is-open}\ref{openness}.

For the second statement, let $\RSpec A$ be 
a derived affine scheme. We need to show that  the natural morphism of spaces
\[
\begin{tikzcd}
\varphi_A\colon\RQuotlX{l}(A)\arrow{r}&\DPerflX{l}_{\CE/}(A)\times_{t_0\DPerflX{l}(\HH^0(A))}t_0\RQuotlX{l}(\HH^0(A))
\end{tikzcd}
\]
induced by the universal property of pullbacks is an equivalence.

Since the inclusions of both source and target of $\varphi_A$ inside $\DPerflX{l}(A)$ are union of full connected components (i.e., all morphisms between $0$-simplices are preserved), it is sufficient to prove that $\varphi_A$ is a bijection at the level of $0$-simplices. The $0$-simplices in the target of $\varphi_A$ correspond to the datum of a sequence of morphisms
\[
\begin{tikzcd}
    \CE_0\defeq\CE_A\arrow{r}
& \CE_1\arrow{r}&\cdots\arrow{r}
& \CE_l
\end{tikzcd}
\]
of perfect complexes over $X_A = X\times \RSpec A$, such that 
\[
\widetilde{\CE}_i\defeq\CE_i\otimes^{\mathbf L}_{\mathscr{O}_{X_A}}\mathscr{O}_{X_{\HH^0(A)}}
\]is flat over $\Spec \HH^0(A)$ and the maps
\[
\begin{tikzcd}
    \widetilde{\CE}_0\arrow{r}
& \widetilde{\CE}_1 \arrow{r}&\cdots\arrow{r}
& \widetilde{\CE}_l
\end{tikzcd}
\]
are all surjective. On the other hand, the $0$-simplices in  the source of $\varphi_A$ correspond to the datum of a sequence of morphisms
\[
\begin{tikzcd}
    \CE_A\arrow{r}
& \CE_1\arrow{r}&\cdots\arrow{r}
& \CE_l
\end{tikzcd}
\]
of perfect complexes over $X_A$ such that each $\CE_i$ is flat over $\RSpec A$ and each morphism is surjective. In particular, it follows immediately that the map $\varphi_A$ is injective at the level of $0$-simplices.

To prove surjectivity, let $\CE_A\to\CE_1\to\cdots\to\CE_l$ correspond to a $0$-simplex in the target of $\varphi_A$. Let $\CL$ be a discrete quasicoherent sheaf over $\RSpec A$ and $\pi_A\colon X_A\to\RSpec A$ be the canonical projection. 
Then, since each $\widetilde{\CE}_i$ is flat over $\Spec \HH^0(A)$, we have that
\begin{align*}
 \CE_i\otimes^{\mathbf L}_{\mathscr{O}_{X_A}}\pi^*_A\CL&\simeq\widetilde{\CE}_i\otimes^{\mathbf L}_{\mathscr{O}_{X_{\HH^0(A)}}}\pi^*_{\HH^0(A)}\CH^0(\CL)
\end{align*}
is a discrete object as well, proving that each $\CE_i$ is flat over $\RSpec A$. 
 
Finally,  each morphism $\widetilde{\CE}_i\to\widetilde{\CE}_{i+1}$ is assumed to be surjective, which implies that the induced morphisms $ \CH^0(\CE_i)\to\CH^0(\CE_{i+1})$ are surjective as well, by which we conclude that $\varphi_A$ is surjective.
\end{proof}

\begin{corollary}
\label{cor:Rquot-is-locally-geom-stack}
The derived moduli stack $\RQuot_X^{[l]}(\CE)$ is locally geometric.
\end{corollary}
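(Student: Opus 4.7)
The plan is to deduce local geometricity directly from the two preceding propositions, by exhibiting $\RQuotlX{l}$ as an open derived substack of a derived stack that has already been shown to be locally geometric.

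First, I would observe that the projective scheme $X$ satisfies the hypotheses of \Cref{thm:perf_locally_geometric}: by \Cref{rmk:perfect-stack}\ref{perf-1}, any proper scheme is perfect and has compactly generated, locally compact quasicoherent derived category. Therefore \Cref{prop:Perf-fixed-E-locallygeometric} applies to the perfect coherent sheaf $\CE$, yielding that $\DPerf^{[l]}(X)_{\CE/}$ is locally geometric.

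Next, I would invoke \Cref{prop:openimmersion} to get the open immersion
\[
\begin{tikzcd}
j_{\CE}\colon \RQuotlX{l} \arrow[hook]{r} & \DPerf^{[l]}(X)_{\CE/}.
\end{tikzcd}
\]
The final step is to note that local geometricity is preserved under open immersions. More concretely, by definition of local geometricity, there exists a cover $\{\UU_\alpha \hookrightarrow \DPerf^{[l]}(X)_{\CE/}\}$ by geometric open derived substacks. Pulling each $\UU_\alpha$ back along $j_\CE$ gives a cover of $\RQuotlX{l}$ by open derived substacks of the form $\UU_\alpha \times_{\DPerf^{[l]}(X)_{\CE/}} \RQuotlX{l}$, each of which is an open derived substack of $\UU_\alpha$ (since open immersions are stable under pullback). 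Since geometricity is stable under homotopy pullback along open immersions (see \cite[Cor.~1.3.3.5]{hag2}), these pullbacks are again geometric, so they provide the required cover of $\RQuotlX{l}$ by geometric open derived substacks.

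There is essentially no obstacle here: all the work has been done in \Cref{prop:Perf-fixed-E-locallygeometric} (local geometricity of the ambient stack of flags with fixed source) and in \Cref{prop:openimmersion} (the open immersion $j_\CE$). The only thing to be careful about is the appeal to stability of geometricity under open immersions, which is a standard fact in the formalism of \cite{hag2}.
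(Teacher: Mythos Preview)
Your proof is correct and follows exactly the paper's approach: combine \Cref{prop:Perf-fixed-E-locallygeometric} (local geometricity of $\DPerf^{[l]}(X)_{\CE/}$) with \Cref{prop:openimmersion} (the open immersion $j_\CE$), and conclude using that open substacks of locally geometric stacks are locally geometric. Your extra justification for this last step via pullback along $j_\CE$ is a welcome elaboration of what the paper states in one line.
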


\begin{proof}
This follows at once from Propositions \ref{prop:Perf-fixed-E-locallygeometric}, \ref{prop:openimmersion} and from the fact that an open substack of a locally geometric stack is again locally geometric.
\end{proof}

\subsubsection{Representability of \texorpdfstring{$\RQuot_X^{[l]}(\CE)$}{}}
The goal of this section is to prove that $\RQuot_X^{[l]}(\CE)$ is a derived enhancement of the classical hyperquot scheme $ \Quot_X^{[l]}(\CE)$ and, in particular, that it is represented by a derived scheme.

The next result follows from a generalisation of the strategy employed by Adhikari's in \cite[Sec.~3.2]{DerivedQuot} for the case $l=1$.

\begin{theorem}
\label{thm:truncation}
There is an equivalence of higher stacks
\[
t_0(\RQuot_X^{[l]}(\CE)) \simeq \Quot_X^{[l]}(\CE).
\]
In particular, $t_0(\RQuot_X^{[l]}(\CE))$ is a classical scheme.
\end{theorem}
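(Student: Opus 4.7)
The plan is to show that for any discrete commutative $\Bbbk$-algebra $A$, the evaluation $\RQuot_X^{[l]}(\CE)(A)$ is equivalent (as a space) to the set $\Quot_X^{[l]}(\CE)(A)$ of classical flag quotients. By definition of the truncation functor $t_0$, this gives the first assertion, and the known representability of the classical hyperquot functor (\cref{sec:nested-quot-classical}) then upgrades the equivalence to an equivalence with the scheme $\Quot_X^{[l]}(\CE)$, yielding the ``in particular'' statement.

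\textbf{Discreteness of the sheaves.} I fix $A$ discrete and a $0$-simplex of $\RQuot_X^{[l]}(\CE)(A)$, corresponding to a flag $\CE_A \to \CE_1 \to \cdots \to \CE_l$ in $\mathsf{Perf}(X_A)$ with each $\CE_i$ flat over $\RSpec A = \Spec A$ and each arrow surjective. Since $A$ is discrete, the quasicoherent sheaf $A$ on $\RSpec A$ is discrete, so the flatness criterion of \cref{sec:flatness-surj} applied to $\CL = A$ (whose pullback along $\pi_A\colon X_A \to \Spec A$ is $\OO_{X_A}$) yields that $\CE_i \simeq \CE_i \otimes^{\mathbf{L}}_{\OO_{X_A}} \OO_{X_A}$ is discrete in $L_{\qcoh}(X_A)$. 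Thus each $\CE_i$ is a classical coherent sheaf on $X_A$ (of finite Tor-dimension by perfectness), the surjectivity condition translates into an epimorphism in $\Coh(X_A)$, and the derived flatness of $\CE_i$ over $A$ reduces to classical $A$-flatness by \cref{remark:flatness}. This already identifies the set of $0$-simplices of $\RQuot_X^{[l]}(\CE)(A)$ with $\Quot_X^{[l]}(\CE)(A)$.

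\textbf{Discrete mapping spaces and trivial automorphisms.} To see that $\RQuot_X^{[l]}(\CE)(A)$ is $0$-truncated, I analyse the space of equivalences between two such flags. Between any two discrete coherent sheaves $\CF, \CG$ on $X_A$, the mapping space in $\mathsf{Perf}(X_A)$ has $\pi_k \cong \Ext^{-k}_{X_A}(\CF,\CG) = 0$ for $k \geqslant 1$, hence is discrete. The mapping space between two flags in $\mathsf{Perf}^{[l]}(X_A) \simeq \Fun([l],\mathsf{Perf}(X_A))$ is a finite limit of such discrete spaces, hence discrete; restricting to the maximal subgroupoid and to the slice under $\CE_A$ preserves discreteness. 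Moreover, given a classical flag $\CE_A \twoheadrightarrow \CT_1 \twoheadrightarrow \cdots \twoheadrightarrow \CT_l$, an automorphism starting at $\id_{\CE_A}$ is determined inductively by the surjectivity of each $\CT_{i-1} \twoheadrightarrow \CT_i$, forcing $\phi_i = \id_{\CT_i}$. Consequently, two flags are equivalent in $\RQuot_X^{[l]}(\CE)(A)$ if and only if their kernels in $\CE_A$ coincide, which matches the classical equivalence relation, and $\pi_0$ recovers $\Quot_X^{[l]}(\CE)(A)$.

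\textbf{Main obstacle.} The principal subtlety lies in the discreteness step: one must verify carefully that derived flatness over a classical ring $A$ really forces a perfect object on $X_A$ to be concentrated in degree zero, which depends on the precise choice of discrete test modules in the definition of flatness and on the interaction with the $t$-structure on $L_{\qcoh}(X_A)$. The remainder is a standard manipulation of limits in $\Spaces$, parallel to Adhikari's argument for the case $l=1$, the only new input for general $l$ being that a finite limit of discrete spaces is again discrete, so the flag structure does not introduce additional higher homotopy.
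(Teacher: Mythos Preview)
Your proof is correct and follows the same overall architecture as the paper: for a discrete $A$, first use derived flatness to force each $\CE_i$ to be a genuine coherent sheaf on $X_A$, then use the vanishing of negative Ext groups between discrete sheaves together with the surjectivity of the quotient maps to kill higher homotopy. The paper organises the last step slightly differently: instead of your ``mapping spaces in $\Fun([l],\mathsf{Perf}(X_A))$ are finite limits of discrete spaces'' argument, it uses the left fibration $\mathsf{Perf}^{[l]}(X_A)^{\simeq}_{\CE_A/}\to\mathsf{Perf}^{[l-1]}(X_A)^{\simeq}$ (forgetting $\CE_A$) with homotopy fibre $\RHom_{X_A}(\CE_A,\CE_1)$, and runs Serre's long exact sequence to show $\pi_i$ vanishes for $i\geqslant 2$ and that the induced map on $\pi_1$ is injective (via the same surjectivity-of-$q_1$ trick you use). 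Your packaging is more elementary and avoids the fibration machinery; the paper's has the advantage of isolating exactly which Ext group controls each homotopy group. Both routes are valid and equivalent in substance.
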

\begin{proof}
We split the proof in three steps.

\emph{Step I.} There is  a natural map
\[
\begin{tikzcd}
\alpha\colon \Quot^{[l]}_X(\CE)\arrow{r} & t_0\RQuotlX{l}.
\end{tikzcd}
\]
Indeed, for any commutative algebra $A$, we have a 
natural map of spaces
\[
\begin{tikzcd}
\Quot^{[l]}_X(\CE)(A)\arrow{r} & t_0\RQuotlX{l}(A),
\end{tikzcd}
\]
since a sequence of 
surjective morphisms of discrete quasicoherent sheaves on $X_A$ which are flat over $\Spec A$
\[
\begin{tikzcd}
    \CE_A\arrow[two heads]{r}&\CE_1\arrow[two heads]{r}&\cdots\arrow[two heads]{r}&\CE_l
\end{tikzcd}
\]
is in particular a sequence of surjective morphisms of quasicoherent sheaves on $X_A$ which are flat over $\Spec A$ in the derived sense.

\emph{Step II.}
We claim that for any commutative algebra $A$,  the morphism $\alpha$ induces a bijection on the sets of connected components of the $A$-valued points.

For injectivity, assume that we are given two elements $\mathscr{E}, \mathscr{F}\in  \Quot^{[l]}_X(\CE)(A)$ which are equivalent in $t_0\RQuotlX{l}(A)$. This means that there exists a quasi-isomorphism $\mathscr{E}\to \mathscr{F}$, commutative up to coherent homotopy, which induces an isomorphism of complexes after applying $\CH^0$.

For surjectivity, notice that an element $\mathscr{E}\in t_0\RQuotlX{l}(A)$ is flat over $\Spec A$, and therefore discrete, which implies that it is the image of  $\CH^0(\mathscr{E})$.

\emph{Step III.} We claim that for any commutative algebra $A$ there is an equivalence of spaces
\[
\alpha\colon \Quot^{[l]}_X(\CE)(A)\simeq t_0\RQuotlX{l}(A).
\]
By the representability of $ \Quot^{[l]}_X(\CE)$, we have that $ \Quot^{[l]}_X(\CE)(A)$ is a set, therefore the claim follows once we prove that $\RQuotlX{l}(A)$ has trivial  higher homotopies. In other words, we need to prove that for all $\mathscr{E}\in \RQuotlX{l}(A)$, there are vanishings of homotopy groups $\pi_i(\RQuotlX{l}(A), \mathscr{E})=0$, for all $i>0$. Set 
\[
\mathscr{E}\defeq
{\{}
\CE_A\xrightarrow{~~~~~}\CE_1\xrightarrow{~~~~~}\cdots\xrightarrow{~~~~~}\CE_l
{\}},
\]
where each $\CE_i$ is a genuine coherent sheaf on $X_A$. Since 
\[
\RQuotlX{l}(A)\subset \DPerflX{l}_{\CE/}(A)
\]
is a union of connected components, we can compute its homotopy groups from $\DPerflX{l}_{\CE/}(A) $. Consider the natural map of spaces
\[
\begin{tikzcd}
\mathsf{Perf}^{[l]}(X_A)^{\simeq}_{\CE_A/}\arrow{r}&\mathsf{Perf}^{[l-1]}(X_A)^{\simeq}
\end{tikzcd}
\]
forgetting $\CE_A$, as in \eqref{forgetful-morphism-2}. This is naturally a left fibration \cite[Cor.~$2.1.2.2$]{htt}, hence in particular a Kan fibration \cite[Lemma $2.1.3.3$]{htt}; therefore, we can consider the homotopy fibre at the point $\mathscr{F}=\{\CE_1\to\cdots\to\CE_l\}$, namely
\[
\begin{tikzcd}
\mathrm{fib}\,
\Bigl(\mathsf{Perf}^{[l]}(X_A)^{\simeq}_{\CE_A/}\arrow{r}&\mathsf{Perf}^{[l-1]}(X_A)^{\simeq}\Bigr)
\simeq \RHom_{X_A}(\CE_A,\CE_1).
\end{tikzcd}\]
Applying Serre's long exact sequence of homotopy groups for Kan fibrations we obtain
\begin{multline*}
\cdots\xrightarrow{~~~~~~}
\pi_n(\RHom_{X_A}(\CE_A,\CE_1),\mathscr{E})\xrightarrow{~~~~~~}
\pi_n(\mathsf{Perf}^{[l]}(X_A)^{\simeq}_{\CE_A/},\mathscr{E})\\
\xrightarrow{~~~~~~}
\pi_n(\mathsf{Perf}^{[l-1]}(X_A)^{\simeq},\mathscr{F})
\xrightarrow{~~~~~~}\pi_{n-1}(\RHom_{X_A}(\CE_A,\CE_1),\mathscr{E})\xrightarrow{~~~~~~}\cdots
\end{multline*}
so we are only left to compute  homotopy groups of $\RHom_{X_A}(\CE_A, \CE_1) $ and $\mathsf{Perf}^{[l-1]}(X_A)^{\simeq}$. Such homotopy groups satisfy
\begin{align*}
\pi_i(\RHom_{X_A}(\CE_A,\CE_1),\mathscr{E})
&\cong\Ext^{-i}_{X_A}(\CE_A,\CE_1), & i \geqslant 0\\
\pi_i(\mathsf{Perf}^{[l-1]}(X_A)^{\simeq},\mathscr{F})
&\cong\Ext^{1-i}_{\mathsf{Perf}^{[l-1]}(X_A)}(\mathscr{F},\mathscr{F}), & i > 0.
\end{align*}
Notice that $\CE_A$ and $\CE_1$ are  discrete quasicoherent sheaves defined over the classical scheme $X_A$  flat over $\Spec A$, so 
\begin{align}\label{eqn: vanishing ext}
\Ext^{-i}_{X_A}(\CE_A,\CE_1)\cong0, \qquad i>0.    
\end{align}
On the other hand, we claim that 
\[
\Ext^{1-i}_{\mathsf{Perf}^{[l-1]}(X_A)}(\mathscr{F},\mathscr{F})\cong 0, \qquad i>1.
\]
Consider the evaluation maps $ \Ext^{1-i}_{\mathsf{Perf}^{[l-1]}(X_A)}(\mathscr{F},\mathscr{F})\to \Ext^{1-i}_{X_A}(\CE_j,\CE_j)$ for each $j=1, \dots, l-1$. We have that $\Ext^{1-i}_{\mathsf{Perf}^{[l-1]}(X_A)}(\mathscr{F},\mathscr{F}) $ vanishes if its image via each evaluation map vanishes, which follows once more from \eqref{eqn: vanishing ext}.

Chasing down Serre's long exact sequence, we obtain
\[
\pi_i(\mathsf{Perf}^{[l]}(X_A)^{\simeq}_{\CE_A/},\mathscr{E})\cong0, \qquad i>1,
\]
and an exact sequence
\[
0 \xrightarrow{~~~~~}
\pi_1(\mathsf{Perf}^{[l]}(X_A)^{\simeq}_{\CE_A/},\mathscr{E})\xrightarrow{~~~~~} 
\pi_1(\mathsf{Perf}^{[l-1]}(X_A)^{\simeq},\mathscr{F}) \xrightarrow{~~~~~}
\pi_0(\RHom_{X_A}(\CE_A,\CE_1),\mathscr{E}).
\]
The last  map can be described as
\[
\begin{tikzcd}[row sep=tiny]
\Hom_{\mathsf{Perf}^{[l-1]}(X_A)}(\mathscr{F},\mathscr{F})\arrow{r}{q_{1}^\ast}&\Hom_{X_A}(\CE_A,\CE_1)\\
\varphi=(\varphi_1,\ldots,\varphi_l)\arrow[maps to]{r} &\varphi_1\circ q_1,
\end{tikzcd}
\]
where $q_1\colon\CE_A\to\CE_1$ denotes the first surjection and where $\varphi_i \colon \CE_i \to \CE_{i}$ are the maps forming the given map $\SF \to \SF$. Notice that the required vanishing follows from the injectivity of $q_{1}^\ast$, which in turn follows from the surjectivity of $q_1$ and of each map $q_i\colon \CE_{i-1}\to \CE_i$, for $i=2, \dots, l$ via a simple inductive argument on $l$.
\end{proof}

\begin{corollary}
\label{RQuot-is-derived-scheme}
The derived stack $\RQuot_X^{[l]}(\CE)$ is representable by a derived scheme.
\end{corollary}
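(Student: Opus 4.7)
The plan is to combine the identification $t_0\RQuot_X^{[l]}(\CE)\simeq \Quot_X^{[l]}(\CE)$ from \Cref{thm:truncation} with the local geometricity of \Cref{cor:Rquot-is-locally-geom-stack}, and to transfer a Zariski affine cover of the classical truncation to a Zariski affine cover of $\RQuot_X^{[l]}(\CE)$ by means of \Cref{prop:opensubstacks}.

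Concretely, I would proceed as follows. Since $\Quot_X^{[l]}(\CE)$ is a $\Bbbk$-scheme locally of finite type, I pick a Zariski open cover $\{U_\alpha = \Spec R_\alpha\}$ by affine schemes. Applying \Cref{prop:opensubstacks} to the locally geometric derived stack $\RQuot_X^{[l]}(\CE)$, each $U_\alpha$ lifts canonically to a Zariski open derived substack $\UU_\alpha \hookrightarrow \RQuot_X^{[l]}(\CE)$ satisfying $t_0\UU_\alpha \simeq U_\alpha$. The collection $\{\UU_\alpha\}$ Zariski-covers $\RQuot_X^{[l]}(\CE)$, because the corresponding $U_\alpha$ cover the truncation.

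It then remains to verify that each $\UU_\alpha$ is actually a derived affine scheme. Each $\UU_\alpha$ is locally geometric as an open derived substack of a locally geometric stack, its classical truncation $\Spec R_\alpha$ is an ordinary affine scheme, and the homotopy group computation performed in Step III of the proof of \Cref{thm:truncation} applies verbatim to $\UU_\alpha$, showing that $\UU_\alpha(A)$ is discrete for every classical commutative $\Bbbk$-algebra $A$. Together these properties force $\UU_\alpha$ to be representable by $\RSpec$ of a connective cdga, completing the Zariski affine cover of $\RQuot_X^{[l]}(\CE)$ by derived affines.

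The main obstacle is precisely this last implication: passing from local geometricity, affineness of the classical truncation, and $0$-truncatedness on ordinary algebras, to representability by a single derived affine scheme. This reflects the standard principle that the derived structure of a locally geometric stack is rigid enough to be controlled by its classical truncation as soon as the latter is an actual scheme. A concrete way to discharge it in our setting is to revisit the inductive construction of \Cref{prop: Perf f loc geom}: each extension step is governed by the derived mapping stack $\mathbf{Mor}_{X_{R_\alpha}}(-,-)$ of \Cref{sec:stack-MOR}, which is representable as a total space on a perfect $R_\alpha$-dg-module, so that $\UU_\alpha$ is built iteratively as a derived affine over $\Spec R_\alpha$, with the open conditions of flatness and surjectivity preserving this affineness.
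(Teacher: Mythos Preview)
Your overall strategy matches the paper's: combine \Cref{thm:truncation} (truncation is the classical hyperquot scheme) with \Cref{cor:Rquot-is-locally-geom-stack} (local geometricity). The paper then concludes in one line by invoking \cite[Cor.~1.6.7.4]{sag}, which is precisely the ``standard principle'' you allude to: a locally geometric derived stack whose classical truncation is a scheme is itself a derived scheme. Your first justification is thus correct in spirit and simply lacks that citation.

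Your second justification, however, does not work as stated. The inductive arguments of \Cref{prop: Perf f loc geom} and \Cref{prop:Perf-fixed-E-locallygeometric} show that certain \emph{fibres} of forgetful maps are $\mathbf{Mor}$-stacks; they do not exhibit $\UU_\alpha$ itself as an iterated $\mathbf{Mor}$-construction over $\Spec R_\alpha$. Even granting that the relevant $\mathbf{Mor}_{X_{R_\alpha}}(\CF,\CG)$ are derived affine schemes (which is not automatic: the total space of a perfect dg-module is affine only when that module is coconnective, a fact you would need to extract from flatness of $\CF,\CG$ and have not argued), affineness of fibres over closed points does not yield affineness of the total space. Finally, the open conditions of flatness and surjectivity, together with the further restriction to $U_\alpha$, are Zariski-open and hence generally destroy affineness rather than preserve it. So the hand-built route does not circumvent the appeal to Lurie's result; you should cite it directly.
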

\begin{proof}
By \cref{cor:Rquot-is-locally-geom-stack},   $\RQuotlX{l}$ is a locally geometric derived stack and by \cref{thm:truncation}  its truncation is equivalent to the scheme  $\Quot^{[l]}_X(\CE)$. Therefore by applying \cite[Cor.~$1.6.7.4$]{sag} we deduce that $\RQuotlX{l}$ is a derived scheme.
\end{proof}

\section{Proof of Theorems \ref{MAIN-THEOREM-TG-CPLX} and \ref{MAIN-THEOREM-OBS-TH}}\label{sec:proof-of-main-thm}

\subsection{Tangent complex of source-and-target map}\label{sec:source-target-tangent}
Let $\XX$ be a derived stack satisfying the assumptions of \cref{thm:perf_locally_geometric}. In this section we provide an explicit description of the relative tangent complex $\BT_{\langle s,t\rangle}$ attached to the source-and-target map
\[
\begin{tikzcd}
\DPerf^{[1]}(\XX) \arrow{rr}{\langle s,t\rangle} && \DPerf(\XX)^{\times 2}  
\end{tikzcd}
\]
introduced in greater generality in \Cref{sec:stack-MOR}. This will be used crucially in \Cref{sec:cotangent-complex-quot}, starting from Diagram \ref{map-gamma}. To fix notation, we denote by
\[
\begin{tikzcd}
    \SF \arrow{r}{f^{\uni}} & \SG
\end{tikzcd}
\]
the universal map living over $\DPerf^{[1]}(\XX) \times \XX$.

\begin{prop}\label{prop:TANGENT-source-and-target}
Let $\XX$ be a derived stack satisfying the assumptions of \cref{thm:perf_locally_geometric}, e.g.~a proper classical scheme. There is an equivalence
\[
\begin{tikzcd}
\BT_{\langle s,t\rangle} \arrow{r}{\sim} & \RRlHom_\tau(\SF,\SG),
\end{tikzcd}
\]
where $\tau\colon \DPerf^{[1]}(\XX) \times \XX \to \DPerf^{[1]}(\XX)$ is the projection.
\end{prop}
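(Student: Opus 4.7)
The plan is to verify the stated equivalence on each affine test and then globalise by \'etale descent. Fix $A\in\cdga$ and a classifying morphism $\xi_A\colon \RSpec A \to \DPerf^{[1]}(\XX)$, corresponding to some $f\colon \CF \to \CG$ in $\mathsf{Perf}(\XX_A)$. I will show that both $\xi_A^\ast \BT_{\langle s, t \rangle}$ and $\xi_A^\ast \RRlHom_\tau(\SF, \SG)$ are canonically equivalent to the perfect $A$-dg-module $\RHom_{\XX_A}(\CF, \CG)$, from which the statement follows by descent.

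For the right-hand side, the identification is flat base change along the cartesian square $\XX_A \to \DPerf^{[1]}(\XX) \times \XX$ induced by $\xi_A \times \mathrm{id}_{\XX}$: the universal property of the flag gives $(\xi_A \times \mathrm{id})^\ast(\SF \to \SG) \simeq (\CF \to \CG)$, so pushing forward the base change of $\RRlHom(\SF, \SG)$ along the projection $\XX_A \to \RSpec A$ yields
\[
\xi_A^\ast \RRlHom_\tau(\SF, \SG) \simeq \RHom_{\XX_A}(\CF, \CG).
\]

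For the left-hand side, consider the $\RSpec A$-relative version of the cartesian square \eqref{def:morphismstack}, which defines $\mathbf{Mor}_{\XX_A}(\CF, \CG)$ as the pullback of $\langle s, t \rangle$ along the point $\langle \CF, \CG \rangle\colon \RSpec A \to \DPerf(\XX)^{\times 2}$. By \Cref{lemma:functoriality-tangents}, the relative tangent $\BT_q$ of the projection $q\colon \mathbf{Mor}_{\XX_A}(\CF, \CG) \to \RSpec A$ satisfies $\BT_q \simeq h^\ast \BT_{\langle s, t \rangle}$, where $h$ denotes the top horizontal map in the square. On the other hand, by its defining universal property (cf.~the proof of \Cref{prop:Mor-locally-geometric}), $\mathbf{Mor}_{\XX_A}(\CF, \CG)$ is the linear derived $A$-stack associated to the perfect $A$-dg-module $\RHom_{\XX_A}(\CF, \CG)$, so its relative tangent over $\RSpec A$ is canonically the trivial bundle $q^\ast \RHom_{\XX_A}(\CF, \CG)$. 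Factoring $\xi_A = h \circ \sigma_f$ through the section $\sigma_f\colon \RSpec A \to \mathbf{Mor}_{\XX_A}(\CF, \CG)$ classifying $f$ itself, I then deduce
\[
\xi_A^\ast \BT_{\langle s, t \rangle} \simeq \sigma_f^\ast h^\ast \BT_{\langle s, t \rangle} \simeq \sigma_f^\ast \BT_q \simeq \RHom_{\XX_A}(\CF, \CG).
\]

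The main obstacle, or at least the bookkeeping-heaviest step, is verifying that these pointwise identifications are natural in $\RSpec A$: both sides satisfy base change in $A$, but one must check the compatibility of the composed isomorphisms in order to conclude by \'etale descent. A more conceptual alternative would be to apply the tangent formula for internal mapping stacks directly to $\DPerf^{[l]}(\XX) = \mathbf{Map}_{\dSt}(\XX, \DPerf^{[l]})$, reducing the claim to an absolute computation of $\BT_{\overline{\langle s, t \rangle}}$ on $\DPerf^{[1]} \to \DPerf^{\times 2}$ via the standard fibre sequence in the arrow category $\Fun([1], \mathsf{Perf})$, which yields $\BT_{\overline{\langle s, t \rangle}}\vert_{F\to G} \simeq \RHom(F, G)$; pulled back along the universal evaluation map and pushed forward along $\tau$, this recovers the stated formula.
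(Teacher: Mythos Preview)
Your argument is correct and rests on the same key inputs as the paper's proof: the cartesian square defining $\mathbf{Mor}_{\XX_A}(\CF,\CG)$ and its identification as the linear stack attached to the perfect $A$-module $\RHom_{\XX_A}(\CF,\CG)$. The difference is in the packaging. You go directly for the \emph{relative} tangent: apply \Cref{lemma:functoriality-tangents} to the square to get $h^\ast\BT_{\langle s,t\rangle}\simeq\BT_q$, then use the linear-stack description to identify $\BT_q\simeq q^\ast\RHom_{\XX_A}(\CF,\CG)$ and restrict along the section $\sigma_f$. The paper instead first determines the \emph{absolute} cotangent complex $\BL_{\DPerf^{[1]}(\XX)}$ as an explicit cone (with the map $w^{\uni}$ written out), separately identifies $\langle s,t\rangle^\ast\BL_{\DPerf(\XX)^{\times 2}}$, and only then extracts $\BL_{\langle s,t\rangle}$ from the transitivity triangle before dualising. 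Your route is more economical for the stated proposition; the paper's route produces the explicit description \eqref{cotangent-Perf^1} of $\BL_{\DPerf^{[1]}(\XX)}$ as a byproduct, together with the explicit morphism $w^{\uni}$, which is in the spirit of the computations used later (e.g.\ the map $\Gamma$ in \eqref{map-gamma}). Both proofs ultimately invoke the same pointwise identification and then a globalisation step that is treated with the same level of informality in the paper as in your write-up; your concern about naturality is legitimate but applies equally there.
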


\begin{proof}
Since both $\DPerf(\XX)$ and $\DPerf^{[1]}(\XX)$ are locally geometric derived stacks (cf.~\Cref{prop: Perf f loc geom}), they both admit global (co)tangent complexes and so does the map $\langle s,t\rangle$. Cotangent complexes are fully determined by local data in the sense of \cite[Rmk.~17.2.4.3]{sag}. Applying this to the cotangent complex of $\DPerf^{[1]}(\XX)$, we obtain the following description. For any $A \in \cdga$ and for any $A$-valued point $x\colon \RSpec A \to \DPerf^{[1]}(\XX)$, corresponding to a map $f\colon \CF\to\CG$ in $\mathsf{Perf}(\XX_A)$, the pullback diagram
\begin{equation*}
\begin{tikzcd}[row sep=large,column sep=large]
\RSpec A \arrow{dr}
\arrow[bend left=20]{drr}[description]{\id}
\arrow[bend right=20]{ddr}[description]{x} & & \\
& \mathbf{Mor}_{\XX_A}(\CF,\CG) \MySymb{dr} \arrow{r}\arrow{d}
& \RSpec A \arrow{d}{\langle\CF,\CG\rangle}  \\
& \DPerf^{[1]}(\XX) \arrow[swap]{r}{\langle s,t\rangle} 
& \DPerf(\XX)^{\times 2}
\end{tikzcd}
\end{equation*}
induces, after pulling back along $x$ the transitivity triangle attached to $\langle s,t\rangle$, an exact triangle 
\[
\RHom_{\XX_A}(\CF,\CF)^\vee[-1] \oplus \RHom_{\XX_A}(\CG,\CG)^\vee[-1] \xrightarrow{~~~~~} x^\ast \BL_{\DPerf^{[1]}(\XX)} \xrightarrow{~~~~~} \RHom_{\XX_A}(\CF,\CG)^\vee
\]
in $L_{\qcoh}(\RSpec A)$. We deduce that
\[
\begin{tikzcd}
x^\ast\BL_{\DPerf^{[1]}(\XX)} \simeq 
\Cone \biggl(\RHom_{\XX_A}(\CF,\CG)^\vee \arrow{r}{w} &  \RHom_{\XX_A}(\CF,\CF)^\vee \oplus \RHom_{\XX_A}(\CG,\CG)^\vee\biggr)[-1]
\end{tikzcd}
\]
where $w$ is dual of the map
\[
(\alpha\colon\CF \to \CF,\beta\colon \CG \to \CG) \mapsto f\circ \alpha - \beta \circ f.
\]
Therefore, globalising, we obtain 
\begin{equation}
\label{cotangent-Perf^1}
\begin{tikzcd}  
\BL_{\DPerf^{[1]}(\XX)}\simeq \Cone \biggl(\RRlHom_\tau(\SF,\SG)^\vee \arrow{r} {w^{\uni}} & \RRlHom_\tau(\SF,\SF)^\vee\oplus \RRlHom_\tau(\SG,\SG)^\vee \biggr)[-1],
\end{tikzcd}
\end{equation}
where $w^{\uni}$ is the dual of the map
\[
(\alpha\colon\SF \to \SF,\beta\colon \SG \to \SG) \mapsto f^{\uni}\circ \alpha - \beta \circ f^{\uni}.
\]
On the other hand, we know that $\DPerf(\XX)^{\times 2}$ admits a global cotangent complex, and using the same strategy of \cite[Cor.~3.29]{moduli-of-objects-dg} it can be described as
\[
\BL_{\DPerf(\XX)^{\times 2}}\simeq\RRlHom_{q}(\SU_1,\SU_1)^\vee[-1]\oplus \RRlHom_{q}(\SU_2,\SU_2)^\vee[-1],
\]
where $q\colon \DPerf(\XX)\times \XX\to \DPerf(\XX)$ is the projection and $\SU_i$ is the universal perfect complex over the $i$-th copy $\DPerf(\XX)\times \XX$ for $i=1,2$. Moreover, by the very definition of the map $\langle s,t\rangle$, we have 
\begin{align*}
(s \times \id_{\XX})^\ast \SU_1 &= \SF,\\
(t \times \id_{\XX})^\ast \SU_2 &= \SG.    
\end{align*}
This yields
\begin{equation}\label{pb-st}
\langle s,t\rangle^\ast \BL_{\DPerf(\XX)^{\times 2}} \simeq \RRlHom_\tau (\SF,\SF)^\vee[-1]
\oplus \RRlHom_\tau (\SG,\SG)^\vee[-1].
\end{equation}
Combining \eqref{cotangent-Perf^1} and \eqref{pb-st} with the transitivity triangle of cotangent complexes
\[
\begin{tikzcd}
\BL_{\langle s,t\rangle}[-1] \arrow{r} & \langle s,t\rangle^\ast \BL_{\DPerf(\XX)^{\times 2}}\arrow{r} &  \BL_{\DPerf^{[1]}(\XX)} \arrow{r} & \BL_{\langle s,t\rangle}
\end{tikzcd}
\]
and shifting back, we see that $\BL_{\langle s,t\rangle} \simeq \RRlHom_\tau(\SF,\SG)^\vee$. Dualising, we obtain the sought after result.
\end{proof}
We remark that a similar computation of relative tangent complexes for the moduli stack of extensions is carried on in \cite[Corollary 3.7]{Porta_Sala_Hall_algebras}.
\subsection{Tangent complex of \texorpdfstring{$\RQuotlX{l}$}{}}\label{sec:cotangent-complex-quot}
Let $X$ be a projective scheme, $\CE \in \Coh(X)$ a perfect coherent sheaf over $X$, and $l$ a positive integer. Form the derived hyperquot scheme
\[
\QQ \defeq \RQuotlX{l}
\]
and let 
\begin{equation}\label{universal flag}
\begin{tikzcd}
\SE_0^{\uni} = \CE_{\QQ}\arrow[two heads]{r}{\psi_1} & \SE_1^{\uni} \arrow[two heads]{r}{\psi_2} & \SE_2^{\uni} \arrow[two heads]{r}{\psi_3} & \cdots \arrow[two heads]{r}{\psi_l} & \SE_l^{\uni}    
\end{tikzcd} 
\end{equation}
be the universal flag living over $\QQ\times X$. Consider the open substacks 
\begin{align*}
\DPerf(X)_\fl &\,\,\subset\,\,\DPerf(X) \\
\DPerf^{[l]}(X)_\fl &\,\,\subset\,\, \DPerf^{[l]}(X) \\
\DPerf^{[l]}(X)_{\CE/,\fl} &\,\,\subset \,\,\DPerf^{[l]}(X)_{\CE/}
\end{align*}
parametrising \textit{flat} perfect complexes or flags thereof: the fact that these are indeed open substacks is contained in the proof of \cref{prop:openimmersion}.
Next, form the $l$-fold products
\begin{align*}
\PP_l & \defeq \DPerf(X)_\fl^{\times l} = \PP_1 \times \cdots \times \PP_1 \\
\PP_{[l]} & \defeq \DPerf^{[1]}(X)_\fl^{\times l} = \PP_{[1]} \times \cdots \times \PP_{[1]}.
\end{align*}
Our key focus will be on the morphism
\[
\begin{tikzcd}[row sep=tiny]
\QQ\arrow{r}{\hh} & \PP_l \\
{\{}\CE \to \CE_1 \to \cdots \to \CE_l{\}} \arrow[mapsto]{r} & (\CE_1,\ldots,\CE_l).
\end{tikzcd}
\]
To fix notation, form the cartesian diagrams
\begin{equation}\label{double-fibre-diagram}
\begin{tikzcd}[row sep=large,column sep=large]
\QQ\times X\MySymb{dr}\arrow{r}{\hh \times \id_X}\arrow{d}{\boldit{\pi}} & \PP_l \times X \MySymb{dr}\arrow{r}{\pr_i \times \id_X}\arrow{d}{p} & \PP_1 \times X\arrow{d}{q} \\
\QQ\arrow[swap]{r}{\hh} & \PP_l\arrow[swap]{r}{\pr_i} & \PP_1
\end{tikzcd}
\end{equation}
where $\pr_i$ denotes the $i$-th projection, for $i=1,\ldots,l$. Let $\SU_i$ be the universal complex on the $i$-th copy $\PP_1\times X$. Then its tangent complex is
\[
\BT_{\PP_l} \simeq \underset{1\leqslant i \leqslant l}{\Boxplus}\RRlHom_q(\SU_i,\SU_i)[1],
\]
and therefore we have
\begin{equation}\label{chain-pullbacks}
\begin{split}
\hh^\ast \BT_{\PP_l}[-1]
&\simeq \hh^\ast\underset{1\leqslant i \leqslant l}{\Boxplus}\RRlHom_q(\SU_i,\SU_i) \\
&\simeq \bigoplus_{1\leqslant i \leqslant l}\hh^\ast \pr_i^\ast q_\ast \RRlHom(\SU_i,\SU_i) \\
&\simeq \bigoplus_{1\leqslant i \leqslant l} \hh^\ast p_\ast (\pr_i \times \id_X)^\ast \RRlHom(\SU_i,\SU_i) \\
&\simeq \bigoplus_{1\leqslant i \leqslant l} \boldit{\pi}_\ast (\hh \times \id_X)^\ast \RRlHom((\pr_i\times\id_X)^\ast\SU_i,(\pr_i\times \id_X)^\ast\SU_i) \\
&\simeq\bigoplus_{1\leqslant i \leqslant l} \RRlHom_{\boldit{\pi}} (\SE_i^{\uni},\SE_i^{\uni}),
\end{split}
\end{equation}
where in the third and fourth equivalences we used the fact that all projections $\pi$, $p$ and $q$ in Diagram \ref{double-fibre-diagram} are obviously schematic and quasicompact in the sense of \cite{studyindag1}, and hence we can safely apply the derived base change formula \cite[Ch.~$3$, Prop.~$2.2.2$]{studyindag1} along \eqref{double-fibre-diagram}. 

Now, the universal maps in \eqref{universal flag} yield natural morphisms
\[
\begin{tikzcd}[column sep=large,row sep=tiny]
\psi_i^\ast \colon \RRlHom_{\boldit{\pi}} (\SE_i^{\uni},\SE_i^{\uni}) \arrow{r}{-\circ\psi_i} & \RRlHom_{\boldit{\pi}} (\SE_{i-1}^{\uni},\SE_i^{\uni}) & i=1,\ldots,l\\
\psi_{i\ast}\colon \RRlHom_{\boldit{\pi}} (\SE_{i-1}^{\uni},\SE_{i-1}^{\uni}) \arrow{r}{\psi_{i}\circ -} & \RRlHom_{\boldit{\pi}} (\SE_{i-1}^{\uni},\SE_{i}^{\uni})  & i=1,\ldots,l
\end{tikzcd}
\]
which, by the chain of equivalences \eqref{chain-pullbacks}, can be assembled to form a morphism 
\begin{equation}\label{Upsilon-map}
\begin{tikzcd}[row sep=tiny]
\hh^\ast \BT_{\PP_l}[-1] \arrow{r}{\Upsilon} & \displaystyle \bigoplus_{1\leqslant i \leqslant l} \RRlHom_{\boldit{\pi}}(\SE_{i-1}^{\uni},\SE_{i}^{\uni}),
\end{tikzcd}
\end{equation}
represented by the $(l-1)\times l$ matrix\footnote{Note that $\psi_{1\ast}$ is not used.}
\[
\Upsilon = 
\begin{pmatrix}
    \psi_1^\ast & 0 & 0 & 0 & \cdots & 0 & 0\\
    -\psi_{2\ast} & \psi_2^\ast  & 0 & 0 & \cdots & 0 & 0 \\
    0 & -\psi_{3\ast} & \psi_3^\ast  & 0 & \cdots & 0 & 0 \\
    \vdots & \vdots & \vdots & \vdots & \cdots & \vdots & \vdots \\
    0 & 0 & 0 & 0 & -\psi_{l-1,\ast} & \psi_{l-1}^\ast & 0 \\
    0 & 0 & 0 & 0 & 0 & -\psi_{l\ast} & \psi_l^\ast
\end{pmatrix}.
\]
Consider the $l$-fold source-and-target morphism
\[
\begin{tikzcd}[column sep=large]
\PP_{[l]} = \DPerf^{[1]}(X)^{\times l}_{\fl} \arrow{rr}{u_l =\langle s,t\rangle^{\times l}} & & \PP_{2l} = \PP_2^{\times l}
\end{tikzcd}
\]
sending $(\CF_i \to \CG_i)_{i=1,\ldots,l} \mapsto (\CF_1,\CG_1,\ldots,\CF_l,\CG_l)$. Denote by $v_i\colon \SF_i \to \SG_{i}$ the universal morphism over the $i$-th copy $\PP_{[1]} \times X$. If $(\SU_i^1,\SU_i^2)$ denotes the universal pair over the $i$-th copy of $\PP_2 \times X$, then it  is clear that $(u_1 \times \id_X)^\ast \SU_i^1 = \SF_i$ and $(u_1 \times \id_X)^\ast \SU_i^2 = \SG_{i}$. Let $\tau\colon \PP_{[1]} \times X \to \PP_{[1]}$ be the projection.
The transitivity triangle of tangent complexes 
\begin{equation}\label{tangent-u} 
\begin{tikzcd}[column sep=large]
u_l^\ast \BT_{\PP_{2l}}[-1] \arrow{r} & 
\BT_{u_l} \arrow{r} & 
\BT_{\PP_{[l]}}
\end{tikzcd}
\end{equation}
attached to the morphism $u_l$ realises $\BT_{\PP_{[l]}}$ as the cone of the morphism $\Gamma$ appearing in the commutative diagram 
\begin{equation}\label{map-gamma}
\begin{tikzcd}[column sep=large,row sep=large]
u_l^\ast \BT_{\PP_{2l}}[-1] \arrow{r}\arrow[d,"\simeq",sloped] & 
\BT_{u_l}\arrow[d,"\simeq",sloped]\\
\displaystyle\underset{1\leqslant i \leqslant l}{\Boxplus}\RRlHom_\tau(\SF_i,\SF_i)\oplus \RRlHom_\tau(\SG_{i},\SG_{i})\arrow{r}{\Gamma} & \displaystyle\underset{1\leqslant i \leqslant l}{\Boxplus}\RRlHom_\tau(\SF_i,\SG_{i}) 
\end{tikzcd}
\end{equation}
where the rightmost vertical isomorphism in \eqref{map-gamma} comes from \Cref{prop:TANGENT-source-and-target}, while $\Gamma$ is defined  by assembling together the natural maps
\[
\begin{tikzcd}[column sep=large,row sep=tiny]
v_i^\ast \colon \RRlHom_\tau(\SG_i,\SG_i)\arrow{r}{-\circ v_i} & \RRlHom_\tau(\SF_i,\SG_i) & i=1,\ldots,l\\
v_{i\ast} \colon \RRlHom_\tau(\SF_i,\SF_i)\arrow{r}{v_i\circ -} & \RRlHom_\tau(\SF_i,\SG_i) & i=1,\ldots,l
\end{tikzcd}
\]
into the $l\times 2l$ matrix
\[
\Gamma
= 
\begin{pmatrix}
v_1^\ast & -v_{1\ast} & 0 & 0 & \cdots & 0 & 0 & 0 \\
0 & 0 & v_2^\ast & -v_{2\ast} & \cdots & 0 & 0 & 0 \\
\vdots & \vdots & \vdots & \vdots & \cdots & \vdots & \vdots & \vdots \\
0 & 0 & 0 & 0 & 0 & 0 & v_l^\ast & -v_{l\ast}
\end{pmatrix}.
\]
Define the map 
\[
\begin{tikzcd}[row sep=tiny]
 \DPerf^{[l]}(X)_{\fl}  \arrow{r}{m_l} & \PP_{[l]}\\
{\{}\CE_0 \to \CE_1\to \CE_2 \to \cdots \to \CE_l{\}} \arrow[mapsto]{r} & (\CE_0 \to \CE_1, \CE_1 \to \CE_2, \ldots, \CE_{l-1} \to \CE_l).
\end{tikzcd}
\]
We have a commutative diagram\footnote{As explained to us by B.~Fantechi \cite{Fantechi-private}.}
\begin{equation}\label{diag:double-cartesian}
\begin{tikzcd}[row sep=large,column sep=huge]
\QQ\arrow[bend left=20]{rrr}[description]{f_\CE}\arrow[swap]{d}{\hh}\arrow[hook]{r}{j_{\CE}} & \DPerf^{[l]}(X)_{\CE/,_{\fl}}\MySymb{dr}\arrow[hook]{r}{i_{\CE}}\arrow{d}{\widetilde u_l} &
\DPerf^{[l]}(X)_{\fl}\MySymb{dr}\arrow{r}{m_l}\arrow{d} & \PP_{[l]}\arrow{d}{u_l} \\
\PP_l \arrow{r}{\sim}\arrow[bend right=20]{rrr}[description]{g_\CE} & \set{\CE} \times \PP_l \arrow[hook]{r} & \PP_{l+1} \arrow{r}{\langle\id,\Delta^{l-1},\id\rangle} & \PP_{2l}
\end{tikzcd}
\end{equation}
where the middle and rightmost squares are cartesian, $\Delta^{l-1}$ denotes $(l-1)$-fold product of the diagonal morphism $\PP_1 \to \PP_1\times\PP_1$ and $j_{\CE}$ is the open immersion of \Cref{prop:openimmersion}.
Consider the morphism
\[
\begin{tikzcd}
f_\CE \times \id_X \colon \QQ \times X \arrow{r} & \PP_{[l]} \times X.
\end{tikzcd}
\]
One has the relations
\begin{equation}\label{pullback-g_E}
\begin{split}
 (f_\CE \times \id_X)^\ast \SF_i &\simeq \SE_{i-1}^{\uni}  \\
 (f_\CE \times \id_X)^\ast \SG_i &\simeq \SE_{i}^{\uni}
\end{split}
\end{equation}
for $1\leqslant i\leqslant l$, where $\SE_0^{\uni} = \CE_{\QQ}$ as in \eqref{universal flag} and, with a slight abuse of notation, we are writing $\SF_i$ and $\SG_i$ for their pullback along the $i$-th projection $\PP_{[l]}\times X \to \PP_{[1]}\times X$.
It follows that, pulling back the exact triangle \eqref{tangent-u} along $f_\CE$ and combining \eqref{pullback-g_E} with the identifications from \eqref{map-gamma} one gets
\begin{equation}\label{pullback-diagram}
\begin{tikzcd}[row sep=large]
f_\CE^\ast u_l^\ast\BT_{\PP_{2l}}[-1] \arrow{r}\arrow[d,"\simeq",sloped] & f_\CE^\ast \BT_{u_l} \arrow[d,"\simeq",sloped] \\
\displaystyle\bigoplus_{1\leqslant i \leqslant l} \RRlHom_{\boldit{\pi}}(\SE_{i-1}^{\uni},\SE_{i-1}^{\uni})\oplus \RRlHom_{\boldit{\pi}}(\SE_{i}^{\uni},\SE_{i}^{\uni})\arrow{r}{f_\CE^\ast \Gamma} & \displaystyle\bigoplus_{1\leqslant i \leqslant l} \RRlHom_{\boldit{\pi}}(\SE_{i-1}^{\uni},\SE_{i}^{\uni}) 
\end{tikzcd}
\end{equation}
with cone $f_\CE^\ast \BT_{\PP_{[l]}}$.

We are now ready to prove \Cref{MAIN-THEOREM-TG-CPLX} from the introduction.

\begin{theorem}\label{prop:tang-cplx-Q}
The absolute tangent complex $\BT_{\QQ}$ is given as
\[
\BT_{\QQ} \simeq \Cone\left(\bigoplus_{1\leqslant i \leqslant l} \RRlHom_{\boldit{\pi}} (\SE_i^{\uni},\SE_i^{\uni})\xrightarrow{~~~\Upsilon~~~}  \displaystyle\bigoplus_{1\leqslant i \leqslant l} \RRlHom_{\boldit{\pi}}(\SE_{i-1}^{\uni},\SE_{i}^{\uni})\right).
\]
\end{theorem}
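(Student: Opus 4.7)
\emph{Proof plan.} The strategy is to realise $\BT_{\QQ}$ as the middle term of the transitivity triangle of the morphism $\hh\colon \QQ \to \PP_l$, and then identify every ingredient using the cartesian setup of diagram \eqref{diag:double-cartesian} together with \Cref{prop:TANGENT-source-and-target}. Concretely, the transitivity triangle
\[
\BT_{\hh} \longrightarrow \BT_{\QQ} \longrightarrow \hh^{\ast}\BT_{\PP_l}
\]
is equivalent, after one rotation, to
\[
\BT_{\QQ} \simeq \Cone\bigl(\hh^{\ast}\BT_{\PP_l}[-1] \xrightarrow{\,\delta\,} \BT_{\hh}\bigr),
\]
where $\delta$ is the shifted connecting map. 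The source of $\delta$ has already been identified in \eqref{chain-pullbacks} with the source of $\Upsilon$, so we only need to identify the target of $\delta$ with the target of $\Upsilon$, and finally to check that $\delta=\Upsilon$.

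For the target of $\delta$: I would use the outer cartesian rectangle at the right of \eqref{diag:double-cartesian}. By \Cref{lemma:functoriality-tangents} it yields $\BT_{\widetilde u_l} \simeq (m_l\circ i_{\CE})^{\ast}\BT_{u_l}$. Since $j_{\CE}$ is an open immersion by \Cref{prop:openimmersion}, hence \'etale, one has $\BT_{j_{\CE}}\simeq 0$, and transitivity gives $\BT_{\hh}\simeq j_{\CE}^{\ast}\BT_{\widetilde u_l}\simeq f_{\CE}^{\ast}\BT_{u_l}$. The morphism $u_l=\langle s,t\rangle^{\times l}$ is an $l$-fold product of source-and-target maps, so \Cref{prop:TANGENT-source-and-target} applied componentwise, together with the identifications \eqref{pullback-g_E}, yields
\[
\BT_{\hh}\;\simeq\;\bigoplus_{1\leqslant i\leqslant l}\RRlHom_{\boldit{\pi}}(\SE_{i-1}^{\uni},\SE_{i}^{\uni}),
\]
which matches the target of $\Upsilon$.

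For $\delta=\Upsilon$: I would pull the transitivity triangle \eqref{tangent-u} back along $f_{\CE}$; the result is precisely diagram \eqref{pullback-diagram}, in which the map $f_{\CE}^{\ast}\Gamma$ has already been identified in explicit $\RRlHom$-terms. The diagonal embedding $\overline g_{\CE}\colon \{\CE\}\times\PP_l\hookrightarrow \PP_{2l}$ equalises the coordinates $\SG_i\simeq\SF_{i+1}$ (both pulling back to $\SE_i^{\uni}$ on $\QQ\times X$) and sends the first slot $\CF_1$ to the constant $\CE$; after this pairwise collapse, the block matrix $\Gamma$ with entries $v_i^{\ast}=\psi_i^{\ast}$ and $-v_{i\ast}=-\psi_{i\ast}$ reorganises into exactly the bidiagonal matrix $\Upsilon$ of \eqref{Upsilon-map}. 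The constant summand corresponding to $\CF_1=\CE$, namely $\RHom_X(\CE,\CE)[1]$, splits off trivially and is not seen by $\hh^{\ast}\BT_{\PP_l}$, because the map $\hh$ factors through the coordinates $\SE_1^{\uni},\dots,\SE_l^{\uni}$ only.

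The main obstacle is precisely this final bookkeeping: one must keep careful track of how the $2l$ coordinates of $u_l^{\ast}\BT_{\PP_{2l}}$ reassemble into the $l$ factors of $\hh^{\ast}\BT_{\PP_l}$ under $\overline g_{\CE}$, and verify the resulting signs. A clean way to carry this out is to argue affine-locally: at an $A$-valued flag $\CE_A\onto\CE_1\onto\cdots\onto\CE_l$, all tangent complexes become concrete perfect $A$-dg-modules, and the connecting map $\delta$ is computed by the classical Ext long exact sequence attached to each short exact sequence $0\to \CK_i\to \CE_{i-1}\to \CE_i\to 0$; the appearance of $\psi_i^{\ast}$ and $-\psi_{i\ast}$ in the required block form is then immediate from the description of boundary morphisms via composition with the $\psi_i$.
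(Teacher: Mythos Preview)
Your proposal is correct and follows essentially the same route as the paper: both identify $\BT_{\QQ}$ via the transitivity triangle of $\hh$, compute $\BT_{\hh}\simeq f_{\CE}^{\ast}\BT_{u_l}$ from the cartesian squares in \eqref{diag:double-cartesian} together with \Cref{prop:TANGENT-source-and-target}, and then identify the connecting map with $\Upsilon$ by comparing with the pulled-back triangle \eqref{pullback-diagram}. The paper packages your ``bookkeeping'' step as a single morphism of exact triangles induced by the compatibility of tangent complexes along \eqref{diag:double-cartesian}, observing that the composite $\beta\circ\hh^{\ast}g_{\CE\ast}[-1]$ equals $\Upsilon$; this is exactly your observation that the $\CE$-slot contributes trivially (so $\psi_{1\ast}$ is absent) and the diagonal collapse turns $f_{\CE}^{\ast}\Gamma$ into the bidiagonal $\Upsilon$.
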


\begin{proof}
The result follows by combining the following two statements with the transitivity triangle of tangent complexes attached to $\hh$.
The first statement is that there is an equivalence
\begin{equation}\label{T_h}
\begin{tikzcd}
\BT_{\hh} \arrow{r}{\sim} &  \displaystyle\bigoplus_{1\leqslant i \leqslant l} \RRlHom_{\boldit{\pi}}(\SE_{i-1}^{\uni},\SE_{i}^{\uni}).
\end{tikzcd}
\end{equation}
This follows from the fact that all our derived stacks are locally geometric, hence by \Cref{lemma:functoriality-tangents} there is a canonical isomorphism
\[
\begin{tikzcd}
\BT_{\widetilde u_l} \arrow{r}{\sim} & (m_l\circ i_\CE)^\ast \BT_{u_l}.
\end{tikzcd}
\]
On the other hand, $\BT_{u_l} \simeq \Boxplus_{1\leqslant i \leqslant l}\RRlHom_\tau(\SF_i,\SG_{i})$ and therefore 
\[
\begin{tikzcd}
    \BT_{\hh} \simeq j_\CE^\ast \BT_{\widetilde u_l} \arrow{r}{\sim} & f_\CE^\ast \BT_{u_l} \simeq \displaystyle\bigoplus_{1\leqslant i \leqslant l} \RRlHom_{\boldit{\pi}}(\SE_{i-1}^{\uni},\SE_{i}^{\uni}).
\end{tikzcd}
\]
The second statement is that under the identification \eqref{T_h}, the map $\Upsilon$ coincides with the canonical morphism $\hh^\ast \BT_{\PP_l}[-1] \to \BT_{\hh}$ from the transitivity triangle attached to $\hh$. Indeed, consider  the transitivity triangle 
\[
\begin{tikzcd}
\BT_{\PP_l} \arrow{r}{g_{\CE\ast}} & g_\CE^\ast \BT_{\PP_{2l}}\arrow{r} & \BT_{g_\CE}[1].
\end{tikzcd}
\]
The compatibility between tangent complexes along the diagram \eqref{diag:double-cartesian} yields a morphism of exact triangles
\[
\begin{tikzcd}[row sep=large]
& \hh^\ast \BT_{\PP_l}[-1]\arrow{r}\arrow{d}{\hh^\ast g_{\CE\ast}[-1]} & \BT_{\hh}\arrow{r}\arrow[d,"\simeq",sloped] & \BT_{\QQ}\arrow{d} \\
\hh^\ast g_\CE^\ast \BT_{\PP_{2l}} [-1]\arrow[r,"\simeq"] & f_\CE^\ast u_l^\ast \BT_{\PP_{2l}}[-1] \arrow{r}{\beta} & f_\CE^\ast \BT_{u_l} \arrow{r} & f_\CE^\ast \BT_{\PP_{[l]}} 
\end{tikzcd}
\]
but since the map $\beta$ is identified with $f_\CE^\ast \Gamma$ via \eqref{pullback-diagram}, the composition $\beta \circ \hh^\ast g_{\CE\ast}[-1]$ is naturally identified with $\Upsilon$, as required.
\end{proof}

\subsection{Obstruction theory on the classical hyperquot scheme}\label{sec:proof-of-OT}
In this section we prove \Cref{MAIN-THEOREM-OBS-TH} from the introduction. 

Let 
\[
\begin{tikzcd}
Q=\Quot_X^{[l]}(\CE) \arrow[hook]{r} {\iota} & \QQ=\RQuot_X^{[l]}(\CE)    
\end{tikzcd} 
\]
be the canonical inclusion of the ordinary hyperquot scheme into its derived enhancement. Let $\CE_{\QQ} \onto \SE_\bullet^{\uni}$ be the universal flag over $\RQuot_X^{[l]}(\CE) \times X$. Then 
\begin{align}\label{eqn: universal qut}
      (\iota \times \id_X)^\ast (\CE_{\QQ} \onto \SE_\bullet^{\uni})   \simeq(\CE_{Q} \onto  \ST_\bullet^{\uni})
\end{align}
is the universal flag over $Q \times X$.

We are finally ready to prove  our main result on the classical hyperquot scheme $\Quot_X^{[l]}(\CE)$, namely \Cref{MAIN-THEOREM-OBS-TH} from the introduction. 

\begin{theorem}
\label{main-theorem-obs-th}
Let $X$ be a  projective scheme, $\CE\in \Coh(X)$ a perfect coherent sheaf and $l$ a positive integer. Let $\pi\colon Q \times X \to Q$ be the projection. Then $Q$ admits an obstruction theory
\[
\BE_Q = \Cone \left(
\displaystyle\bigoplus_{1\leqslant i \leqslant l} \RRlHom_{\pi}(\ST_{i-1}^{\uni},\ST_{i}^{\uni})^\vee \xrightarrow{~~~~~} \displaystyle\bigoplus_{1\leqslant i \leqslant l} \RRlHom_{\pi}(\ST_{i}^{\uni},\ST_{i}^{\uni})^\vee
\right)[-1]
\xrightarrow{~~~~~} \BL_Q.
\]
\end{theorem}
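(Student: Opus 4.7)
The plan is to apply Proposition \ref{prop:OT-truncated} to the canonical closed immersion $\iota\colon Q \into \QQ$, which immediately yields that the canonical morphism $\iota^\ast\BL_{\QQ} \to \BL_Q$ is an obstruction theory on $Q$. The remaining work is purely computational: we must identify $\iota^\ast\BL_{\QQ}$ with the complex $\BE_Q$ appearing in the statement. Since $\QQ$ is locally geometric by \Cref{cor:Rquot-is-locally-geom-stack}, its tangent complex is perfect and the cotangent complex is its $\mathscr{O}_{\QQ}$-linear dual $\BL_{\QQ} \simeq \BT_{\QQ}^\vee$.

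Starting from \Cref{prop:tang-cplx-Q}, the tangent complex $\BT_{\QQ}$ fits into an exact triangle
\[
\begin{tikzcd}[column sep=large]
\displaystyle\bigoplus_{1\leqslant i \leqslant l}\RRlHom_{\boldit{\pi}}(\SE_i^{\uni},\SE_i^{\uni}) \arrow{r}{\Upsilon} & \displaystyle\bigoplus_{1\leqslant i \leqslant l}\RRlHom_{\boldit{\pi}}(\SE_{i-1}^{\uni},\SE_i^{\uni}) \arrow{r} & \BT_{\QQ}
\end{tikzcd}
\]
in $L_{\qcoh}(\QQ)$. Applying $\mathscr{O}_{\QQ}$-linear duality rotates this triangle and identifies
\[
\BL_{\QQ} \simeq \Cone\!\left(\displaystyle\bigoplus_{1\leqslant i \leqslant l}\RRlHom_{\boldit{\pi}}(\SE_{i-1}^{\uni},\SE_i^{\uni})^\vee \xrightarrow{~\Upsilon^\vee~} \displaystyle\bigoplus_{1\leqslant i \leqslant l}\RRlHom_{\boldit{\pi}}(\SE_i^{\uni},\SE_i^{\uni})^\vee\right)[-1],
\]
using that all entries are perfect complexes (this is where local geometricity, together with properness of the projection $\boldit{\pi}$, enters).

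It then remains to pull back along $\iota$. Consider the cartesian square
\[
\begin{tikzcd}[row sep=large,column sep=large]
Q\times X\MySymb{dr}\arrow{r}{\iota\times\id_X}\arrow{d}{\pi} & \QQ\times X\arrow{d}{\boldit{\pi}}\\
Q\arrow{r}{\iota} & \QQ.
\end{tikzcd}
\]
Since $\iota$ is a schematic quasicompact morphism, derived base change \cite[Ch.~3, Prop.~2.2.2]{studyindag1} gives $\iota^\ast\boldit{\pi}_\ast \simeq \pi_\ast(\iota\times\id_X)^\ast$, and because the $\SE_i^{\uni}$ are perfect, $(\iota\times\id_X)^\ast$ commutes with $\RRlHom$. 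Combined with \eqref{eqn: universal qut}, this yields the natural identifications
\[
\iota^\ast\RRlHom_{\boldit{\pi}}(\SE_i^{\uni},\SE_j^{\uni}) \simeq \RRlHom_{\pi}(\ST_i^{\uni},\ST_j^{\uni}).
\]
Pulling back the above cone description along $\iota$ and using that $\iota^\ast$ preserves cones and duals of perfect complexes then produces exactly
\[
\iota^\ast\BL_{\QQ} \simeq \Cone\!\left(\displaystyle\bigoplus_{1\leqslant i \leqslant l}\RRlHom_{\pi}(\ST_{i-1}^{\uni},\ST_i^{\uni})^\vee \xrightarrow{~~~~~} \displaystyle\bigoplus_{1\leqslant i \leqslant l}\RRlHom_{\pi}(\ST_i^{\uni},\ST_i^{\uni})^\vee\right)[-1] = \BE_Q,
\]
whence the canonical morphism $\BE_Q \to \BL_Q$ provided by \Cref{prop:OT-truncated} is the desired obstruction theory.

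The only substantive technical point is the bookkeeping for the dualization-plus-base-change step, i.e.~ensuring that the dualization of the defining triangle for $\BT_{\QQ}$ is compatible with the pullback along $\iota$. This is unproblematic thanks to the perfectness of all complexes involved and the standard compatibilities of $(-)^\vee$, $\RRlHom$, and base change for quasicompact schematic morphisms; no genuine obstacle arises beyond carefully tracking the shift by $[-1]$ produced by dualizing a cone.
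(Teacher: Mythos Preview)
Your proposal is correct and follows essentially the same approach as the paper: apply \Cref{prop:OT-truncated} to the inclusion $\iota\colon Q\hookrightarrow\QQ$, dualise the tangent complex of \Cref{prop:tang-cplx-Q} to obtain $\BL_{\QQ}$, and then identify $\iota^\ast\BL_{\QQ}\simeq\BE_Q$ via \eqref{eqn: universal qut}. Your version simply spells out the base change and dualisation bookkeeping that the paper leaves implicit.
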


\begin{proof}
Let $\iota \colon Q \into \QQ$ be the natural inclusion. We know by \Cref{prop:OT-truncated} that the canonical restriction map
\[
\begin{tikzcd}
\iota^\ast \BL_{\QQ} \arrow{r} & \BL_Q
\end{tikzcd}
\]
is an obstruction theory. By \Cref{prop:tang-cplx-Q}, the cotangent complex of $\QQ$ takes the form  
\[
\BL_{\QQ} \simeq \Cone \left(
\displaystyle\bigoplus_{1\leqslant i \leqslant l} \RRlHom_{\boldit{\pi}}(\SE_{i-1}^{\uni},\SE_{i}^{\uni})^\vee \xrightarrow{~~\Upsilon^\vee~~} 
\displaystyle\bigoplus_{1\leqslant i \leqslant l} \RRlHom_{\boldit{\pi}}(\SE_{i}^{\uni},\SE_{i}^{\uni})^\vee
\right)[-1].
\]
By \eqref{eqn: universal qut}, we have
\[
\iota^\ast \BL_{\QQ} \simeq \BE_Q,
\]
which proves the result.
\end{proof}

\subsection{Hyperquot schemes on curves}
\label{sec:curve-case}
Let $X$ be a smooth projective curve, $\CE \in \Coh(X)$ a locally free sheaf, and $l$ a positive integer. Form the hyperquot scheme
\[
Q=\Quot_X^{[l]}(\CE).
\]
In \cite[Thm.~A]{monavari2024hyperquot}, the first and third author constructed a \emph{perfect} obstruction theory
\[
\BE'_{Q} = \Cone
\left(\displaystyle\bigoplus_{1\leqslant i \leqslant l} \RRlHom_\pi (\CK_i,\CK_i) \xrightarrow{~~~~~~} 
\displaystyle\bigoplus_{1\leqslant i \leqslant l} \RRlHom_\pi (\CK_{i},\CK_{i+1})\right)^\vee 
\xrightarrow{~~\phi'~~} \BL_{Q},
\]
where $\CK_i = \ker (\CE_Q \onto \CT_i^{\uni})$ is the $i$-th universal kernel for $i=1,\ldots,l$ and $\CK_{l+1} = \CE_Q$. Even though the complex $\BE'_Q$ might differ from the complex $\BE_Q$ of \Cref{main-theorem-obs-th}, one can easily check that their K-theory classes match in $K_0(Q)$. Therefore by \cite{Siebert} there is an identity of virtual fundamental classes
\[
[\BE_Q,\phi]^{\vir} = [\BE'_Q,\phi']^{\vir} \in A_\ast (Q),
\]
and, a fortiori, their associated enumerative theories also agree. 

Finally, one immediately sees that, in the special case $l=1$, the two obstruction theories agree, matching the 2-term complex
\[
\RRlHom_\pi(\CK,\CT)^\vee.
\]
This perfect obstruction theory was already constructed by Marian--Oprea \cite{MO_duke} and Oprea \cite{Oprea_zero_locus} for $\CE = \OO_X^{\oplus r}$ and by Gillam \cite[Lemma 4.7]{Gillam}.

\subsection*{Acknowledgements}
We are grateful to Barbara Fantechi for pointing us to the key diagram \eqref{diag:double-cartesian}, and to Qingyuan Jiang and Nick Kuhn for useful discussions.S.M. was supported by the FNS Project 200021-196960 ``Arithmetic aspects of moduli spaces on
curves'', and by  the HORIZON-MSCA-2024-PF-01 Project 
101203281 ``Moduli Spaces of Sheaves: Geometry and Invariants'', funded by the Research and Innovation framework programme Horizon Europe {\normalsize\euflag}. A.R. is partially supported by the PRIN project 2022BTA242 ``Geometry of algebraic structures: moduli, invariants, deformations''. All authors are members of the GNSAGA - INdAM.

\bibliographystyle{amsplain-nodash}
\bibliography{The_Bible}

\medskip
\noindent
{\small{Sergej Monavari \\
\address{Dipartimento di Matematica “Tullio Levi-Civita”, Università degli Studi di Padova, Via Trieste 63, 35121 Padova (Italy)} \\
\href{mailto:sergej.monavari@unipd.it}{\texttt{sergej.monavari@unipd.it}}
}}

\bigskip
\noindent
{\small Emanuele Pavia \\
\address{Maison du Nombre, University of Luxembourg, 6, avenue de la Fonte L-4364, Esch-sur-Alzette (Luxembourg)} \\
\href{emanuele.pavia@uni.lu}{\texttt{emanuele.pavia@uni.lu}}}

\bigskip
\noindent
{\small Andrea T. Ricolfi \\
\address{SISSA, Via Bonomea 265, 34136, Trieste (Italy)} \\
\href{mailto:aricolfi@sissa.it}{\texttt{aricolfi@sissa.it}}}
\end{document}